\documentclass[12pt]{amsart}
\usepackage{mathrsfs}
\usepackage{amsfonts}
\usepackage{amsmath}
\usepackage{amssymb}
\usepackage{graphicx}
\usepackage{color}
\usepackage{xcolor}
\usepackage{amsthm,amscd}
\usepackage{calc}
\usepackage{hyperref}
\usepackage{setspace}
\usepackage{bm}
\usepackage{enumitem}
\setlist[enumerate]{label=(\roman*)}

\newtheorem{thm}{Theorem}[section]
\newtheorem{cor}[thm]{Corollary}
\newtheorem{lem}[thm]{Lemma}

\newcommand{\Z}{\mathbb Z}
\newcommand{\R}{\mathbb R}

\newcommand{\n}{\nabla}

\newcommand{\pd}{\partial}

\newcommand{\vH}{\mathbf{H}}

\newcommand{\vx}{\mathbf{x}}
\newcommand{\vv}{\mathbf{v}}

\newtheorem{theorem}{Theorem}

\newtheorem{remark}[theorem]{Remark}

\title[]{Topology of gradient Ricci shrinkers via weighted $L^2$ cohomology}

\author{Fei He}
\email{hefei@xmu.edu.cn}
\address{School of Mathematical Science, Xiamen University, Xiamen, China 361005}
\begin{document}

\begin{abstract}
This paper proves several topological results for smooth gradient Ricci shrinkers. We establish upper bounds for the Betti numbers, a vanishing theorem for cohomology, and a dichotomy for the number of ends. We also prove a full Hodge theorem for a large class of shrinkers. The methods are based on weighted $L^2$ cohomology and extend to self-shrinkers of the mean curvature flow.
\end{abstract}

\maketitle

\section{Introduction}
A gradient Ricci shrinker is a triple $(M, g, f)$, where $(M, g)$ is a complete Riemannian manifold and $f$ is a smooth function, such that the following equation holds:
\begin{equation}\label{eqn: Ricci shrinker equation}
\operatorname{\operatorname{Ric}} + \nabla\nabla f = \frac{1}{2} g,
\end{equation}
where $\operatorname{\operatorname{Ric}}$ denotes the Ricci curvature tensor and $\nabla\nabla f$ denotes the Hessian of $f$. We can, and will, normalize the potential function $f$ so that $|\nabla f|^2 + \operatorname{S} = f$, where $\operatorname{S}$ denotes the scalar curvature.

Gradient Ricci shrinkers arise naturally in the study of Ricci flow, as they are expected to model finite-time singularities of non-collapsed solutions. This expectation has been confirmed for dimensions $2$ and $3$ \cite{Hamilton95,Perelman02}, for Type I singularities in all dimensions \cite{CaoZhang11,EMT11,Naber10,Sesum06},  and, in a certain weak sense, in the general case \cite{Bamler2020, Bamler2023}. Consequently, understanding the topology of gradient Ricci shrinkers is of fundamental importance, and this has been a very active area of research over the last couple of decades. For example, the geometry and topology of gradient Ricci shrinkers have been extensively studied by Munteanu and Wang in a series of works \cite{MW2015,MW2015b,MW2017a,MW2017b,MW2019}, which have significantly enhanced our understanding of the subject. Recently, K\"ahler Ricci shrinkers in complex dimension $2$ have been completely classified, see \cite{BCCD, CCD, CDS, LW2024} and references therein. At the same time, non-K\"ahler Ricci shrinkers are known to exist \cite{AK22}. A complete classification of gradient Ricci shrinkers remains out of reach at the moment. 

Using tools from algebraic geometry, Sun and Zhang~\cite{SunZhang24} established that every gradient K\"ahler Ricci shrinker has finite topological type. It is natural to ask whether this property extends to non-K\"ahler Ricci shrinkers as well. However, such an expectation may be overly optimistic. The symplectic and algebraic structures that played a critical role in the study of K\"ahler Ricci shrinkers are simply not available in the non-K\"ahler case. Nevertheless, it remains desirable to develop tools for effectively estimating topological invariants of gradient Ricci shrinkers. These tools should not rely on the K\"ahler structure. This is the primary goal of the present article.

Our first result is an estimate of the Betti numbers of gradient Ricci shrinkers. Let
\[
\lambda_1(\operatorname{\operatorname{Ric}}) \leq \lambda_2(\operatorname{\operatorname{Ric}}) \leq \cdots \leq \lambda_n(\operatorname{\operatorname{Ric}})
\]
denote the eigenvalues of the Ricci tensor. For simplicity, we implicitly assume that all manifolds under consideration are smooth and \textbf{orientable}. In the non-orientable case, one may lift to the orientable double cover and apply the results there.
\begin{thm}\label{thm-intro:  Betti numbers of gradient Ricci shrinkers}
For any constants $\epsilon_0> 0$ and $c_0 \geq 0$, there is a constant $C_1$ depending on $\epsilon_0$, $c_0$ and the dimension $n$, such that the following holds.
Let $(M^n, g, f)$ be a complete gradient Ricci shrinker satisfying
\begin{enumerate}
\item
$\sum_{k=n-p+1}^{n} \lambda_{k}(\operatorname{Ric}) \leq  \frac{1}{4}\operatorname{S} + \left( \frac{1}{4} - \epsilon_0\right) f + c_0$ on $M$;
\item there is a constant $k$ such that $\langle \mathcal{R}(\omega), \omega \rangle \geq - k |\omega|^2$ for every $p$-form $\omega$ on the domain $\{f < 2 C_1\}$, where $\mathcal{R}$ is the curvature term in the Hodge Laplacian, see \eqref{eqn: curvature term in Hodge Laplacian}.
\end{enumerate}
Then the Betti number $b_p(M)$ satisfies
\[
b_p(M) \leq C e^{-2\mu/n}  \binom{n}{p},
\]
where $\mu = \ln \int e^{-f} (4\pi)^{-\frac{n}{2}} dv$, and $C$ is a constant depending only on $n, \epsilon_0, c_0$, and $k$.
\end{thm}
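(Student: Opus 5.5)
We follow the weighted $L^2$ cohomology strategy of the title: identify $H^p(M;\R)$ with a space of weighted harmonic forms, then bound the dimension of that space by a mean value (Moser iteration) argument. Throughout we use the weight $e^{-f}$, the weighted codifferential $\delta_f = \delta + \iota_{\nabla f}$, and the weighted Hodge Laplacian $\Delta_f = d\delta_f + \delta_f d$. The latter is symmetric on $L^2(e^{-f}dv)$ and, on $p$-forms, satisfies the Weitzenböck-type identity
\[
\Delta_f \omega = \nabla^*_f\nabla \omega + \mathcal{R}(\omega) + \mathcal{L}_{\nabla\nabla f}(\omega).
\]
Here $\mathcal{L}_{\nabla\nabla f}$ is the derivation extension of the Hessian. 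By \eqref{eqn: Ricci shrinker equation}, $\nabla\nabla f = \frac12 g - \operatorname{Ric}$, so on $p$-forms
\[
\mathcal{L}_{\nabla\nabla f} = \tfrac{p}{2} - \mathcal{L}_{\operatorname{Ric}},
\]
and $\mathcal{L}_{\operatorname{Ric}}$ is bounded above by $\sum_{k=n-p+1}^n\lambda_k(\operatorname{Ric})$.

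\textbf{Step 1: Hodge representation.} The goal is to show that every de Rham class has a representative $\omega\in L^2(e^{-f})$ with $d\omega = 0$ and $\delta_f\omega = 0$, and that this map is injective. Conjugating by $e^{-f/2}$ turns $\Delta_f$ into a Schrödinger-type operator with potential
\[
\tfrac14|\nabla f|^2 - \tfrac12\Delta f + (\text{curvature terms}).
\]
Since $\Delta f = \frac n2 - \operatorname{S}$ and $|\nabla f|^2 = f - \operatorname{S}$, the scalar potential is
\[
\tfrac14 f - \tfrac14 \operatorname{S} - \tfrac n4 + \tfrac12 \operatorname{S} + \tfrac p2 - \mathcal{L}_{\operatorname{Ric}} \;\geq\; \tfrac14 f + \tfrac14\operatorname{S} - \sum_{k=n-p+1}^n\lambda_k - C .
\]
Hypothesis (i) makes this at least $\epsilon_0 f - c_0 - C$, which tends to infinity because $f$ grows quadratically. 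So $\Delta_f$ has discrete spectrum on $p$-forms and a spectral gap at infinity. A standard argument then gives a closed range for $d$ on weighted $L^2$ and $H^p_{(2),f}\cong \ker\Delta_f$. Identifying weighted $L^2$ cohomology with de Rham cohomology is more delicate. I would use compactly supported closed forms, which are dense enough for de Rham classes of finite topology, or better an injection $H^p(M)\to H^p_{(2),f}$ through a cutoff-plus-correction argument, using decay on $\{f>C_1\}$. This step needs care but should follow from the same coercive estimate. We only need $b_p\le\dim\ker\Delta_f$, and this only requires injectivity from a finite-dimensional subspace of $H^p(M)$, which can be tested against compactly supported cycles.

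\textbf{Step 2: Concentration.} Let $\omega\in\ker\Delta_f$. Integrating the Weitzenböck identity against $\omega$ with weight $e^{-f}$ gives
\[
\int |\nabla\omega|^2 e^{-f} + \int \bigl(\epsilon_0 f - c_0 - C\bigr)|\omega|^2 e^{-f} \le 0
\]
after Step 1's estimate, possibly via the conjugated form. It follows that
\[
\int_{\{f>2C_1\}} |\omega|^2 e^{-f} \le \tfrac12 \int |\omega|^2 e^{-f}
\]
once $C_1$ is chosen large depending on $n,\epsilon_0,c_0$. Hence the $L^2$ mass is concentrated in $\{f<2C_1\}$.

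\textbf{Step 3: Dimension count.} On $\{f<2C_1\}$ we use Kato's inequality and hypothesis (ii). These give $\Delta_f|\omega|\ge -(k+C)|\omega|$ there, since $\mathcal{L}_{\nabla\nabla f}$ is bounded below by $-\sum_{k=n-p+1}^n\lambda_k$. Moser iteration on balls of radius $1$, using the Sobolev or volume non-collapsing constants controlled by $\mu$ (Li–Wang, Carrillo–Ni), gives
\[
\sup_{\{f<C_1\}}|\omega|^2 \le C e^{-\mu}\cdots\int|\omega|^2e^{-f}.
\]
Then the standard Li argument applies: for a $b$-dimensional space $V$ of such forms, choose $x$ maximizing the trace and obtain
\[
b \le \binom np \sup_x \sup_{\omega\in V}\frac{|\omega(x)|^2}{\|\omega\|^2}\,\mathrm{Vol}_f(\{f<2C_1\}).
\]
The volume of $\{f<2C_1\}$ is bounded by $C(n,C_1)$. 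The local Sobolev constant on shrinkers scales like $e^{-2\mu/n}$, which produces the factor $e^{-2\mu/n}$.

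\textbf{Main obstacle.} I expect the hardest step to be getting the precise $e^{-2\mu/n}$ dependence through the mean value inequality. This requires the log-Sobolev inequality with constant $\mu$ to produce a Sobolev inequality whose constant depends on $\mu$ exactly as $e^{-2\mu/n}$. Injectivity of de Rham cohomology into weighted $L^2$ cohomology is the other delicate point.
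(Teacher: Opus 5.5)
Your Step~1 is not just delicate. Under these hypotheses it is unavailable, and the paper's proof is built to avoid it. To send a de Rham class into weighted $L^2$ cohomology you need a closed representative in $L^2(dv_f)$, and that requires control at infinity. The paper's global Hodge theorem assumes $|\nabla f|\geq\epsilon_0$ outside a compact set together with a Hessian condition, so that pulling back by a retraction along $\nabla f/|\nabla f|^2$ produces $L^2(dv_f)$ forms. Hypothesis (i) gives no such control. For $p<n/2$ it does not even keep $\operatorname{S}$ away from $f$, so $f$ may have critical points escaping to infinity, and $b_p(M)$ is not known to be finite a priori. Testing against compact cycles is a de Rham-lemma argument: it gives injectivity once $L^2(dv_f)$ representatives exist, but it does not produce them. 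The paper instead works on the compact sublevel sets $\Omega_R=\{f\leq R\}$ ($R$ regular) with the absolute boundary condition. There $H^p_{dR}(\Omega_R)\cong\mathcal{H}^p_{f,N}(\Omega_R)$ holds by the weighted Hodge--Morrey and Friedrichs decompositions. It then bounds $\dim\mathcal{H}^p_{f,N}(\Omega_R)$ uniformly in $R$ and passes to $M$ through $H^p_{dR}(M)\cong\varprojlim H^p_{dR}(\Omega_k)$. Your compact-cycle remark belongs exactly at this last step. A finite-dimensional $V\subset H^p_{dR}(M)$ is detected by finitely many compact cycles, hence restricts injectively to $H^p_{dR}(\Omega_R)$ for $R$ large, giving $\dim V\leq\sup_R b_p(\Omega_R)$.

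Step~2 also fails as written. For $\omega\in\ker\Delta_f$ the weighted Weitzenb\"ock identity gives $0=\int(|\nabla\omega|^2+\langle\mathcal{R}(\omega),\omega\rangle+\langle\mathcal{F}(\omega),\omega\rangle)\,dv_f$, which has no $\epsilon_0 f$ term. That term appears only after conjugating to $\tilde\omega=e^{-f/2}\omega$. The rough-Laplacian form $\int(|\nabla\tilde\omega|^2+V_f|\tilde\omega|^2+\langle\mathcal{R}_f(\tilde\omega),\tilde\omega\rangle)\,dv$ then requires $\mathcal{R}\geq -C$ everywhere, while (ii) gives this only on $\{f<2C_1\}$. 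Your Step~1 potential silently drops $\mathcal{R}$ for the same reason. The missing idea is to write the conjugated operator with the Hodge Laplacian instead, $L=\Delta^d+V_f+\mathcal{F}$. Its quadratic form $\int(|d\tilde\omega|^2+|\delta\tilde\omega|^2+V_f|\tilde\omega|^2+\langle\mathcal{F}(\tilde\omega),\tilde\omega\rangle)\,dv$ contains no curvature at all. On $\Omega_R$ we have $d\tilde\omega=-\frac12 df\wedge\tilde\omega$, $\iota_\nu\tilde\omega=0$ and $\nu=\nabla f/|\nabla f|$, so Green's formula contributes the boundary term $\frac12\int_{\partial\Omega_R}|\nabla f|\,|\tilde\omega|^2\geq 0$; relative boundary conditions would give the wrong sign. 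Your computation from (i) correctly shows $V_f+\mathcal{F}\geq\epsilon_0 f-\frac n4-c_0$. Combined with the above, this gives $\int_{\Omega_R\cap\{f>C_1\}}|\tilde\omega|^2\,dv\leq C_2\int_{\{f\leq C_1\}}|\tilde\omega|^2\,dv$, with $C_1,C_2$ depending only on $n,\epsilon_0,c_0$ and not on $R$. A plain energy estimate suffices here; the paper's Agmon weights give more. Hypothesis (ii) is then needed only for the Moser step on balls $B(x,1)$ with $x\in\{f\leq C_1\}$, which stay inside $\{f<2C_1\}$. From there your Step~3 is the paper's argument. The $\mu$-dependence you single out is routine by comparison, since it enters only through the Li--Wang Sobolev inequality in that step. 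Note that iterating a Sobolev constant $C(n)e^{-2\mu/n}$ yields its $n/2$-th power, i.e.\ the $e^{-\mu}$ of your own display, so track that exponent carefully.
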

The constant $\mu$ in Theorem~\ref{thm-intro: Betti numbers of gradient Ricci shrinkers} is Perelman's entropy, which enters via the sharp Sobolev inequality of Y.~Li and B.~Wang~\cite{LW2020}. We note that gradient Ricci shrinkers always have vanishing first Betti number, since their first fundamental group is finite~\cite{Wyl08}. Theorem~\ref{thm-intro: Betti numbers of gradient Ricci shrinkers} follows from Theorem~\ref{thm: finite Betti numbers}, which is established in the broader setting of smooth metric measure spaces. For closed Riemannian manifolds, Betti number estimates of this type were obtained by P.~Li~\cite{Li80}; our focus is therefore primarily on the noncompact case.
\begin{remark}
Note that Condition $(i)$ is automatically satisfied if $|\operatorname{Ric}| = o(f)$ as $f\to \infty$. In general, by moving $\frac{1}{4}\operatorname{S}$ to the left-hand side, we can rewrite condition $(i)$ as
\[
\frac{3}{4}\sum_{k=n-p+1}^{n} \lambda_{k}(\operatorname{Ric}) - \frac{1}{4} \sum_{k = 1}^{n-p} \lambda_k (\operatorname{Ric}) \leq  \left( \frac{1}{4} - \epsilon_0\right) f + c_0,
\]
where the right-hand side grows quadratically by~\cite{CZ2010}.Intuitively, this ``pinching-growth'' condition ensures that the large eigenvalues of $\operatorname{Ric}$ do not grow disproportionately fast relative to the smaller ones.
Since $\operatorname{S} \le \frac{n}{p} \sum_{k=n-p+1}^n \lambda_k(\operatorname{Ric})$, 
condition $(i)$ implies 
$$\operatorname{S}  \leq \frac{n}{p}\left( \operatorname{S}/4 + (1/4 - \epsilon_0)f + c_0 \right).
$$
For $p \geq \frac{n}{2}$, this yields $\operatorname{S} < f$ when $f$ is large, which via $|\nabla f|^2 + \operatorname{S} = f$ precludes critical points of $\nabla f$ outside a compact set, guaranteeing finite Betti numbers (though without quantitative bounds). For $p < \frac{n}{2}$, however, faster than quadratic curvature growth is not ruled out, and finiteness of the Betti numbers is not known a priori. Theorem~\ref{thm-intro: Betti numbers of gradient Ricci shrinkers} is therefore most significant when $p < \frac{n}{2}$.
\end{remark}

Since most known examples of gradient Ricci shrinkers are noncompact, an important question is how many ends they can have. Munteanu and Wang \cite{MW2015} speculated that a gradient Ricci shrinker has either only one end or splits as a product. They confirmed their speculation for gradient K\"ahler Ricci shrinkers \cite{MW2015}; they also proved that a gradient Ricci shrinker must have only one end if $\operatorname{S} \leq \frac{n}{3}$ \cite{MW2022}. Munteanu, Schulze, and Wang \cite{MSW21} obtained an estimate of the number of ends for gradient Ricci shrinkers under some integral curvature assumptions. See also \cite{BB2026}, \cite{HW2022}, and \cite{QW2022} concerning the number of ends. In general, it remains open whether a gradient Ricci shrinker can have infinitely many ends. We obtain the following result concerning the number of ends.

\begin{thm}\label{thm-intro: number of ends}
Let $(M^n, g, f)$ be a complete noncompact gradient Ricci shrinker satisfying \eqref{eqn: Ricci shrinker equation}. If it satisfies
\[
\operatorname{Ric} - \frac{1}{2}\operatorname{S}g \geq - \frac{n-1}{4}g,
\]
then one of the following cases must hold:
\begin{enumerate}
\item $b_{n-1}(M) = 0$ and $(M, g)$ has only one end;
\item $b_{n-1}(M) = 1$ and $(M, g)$ splits as $(N, g_N) \times (\mathbb{R}, g_{\mathrm{Eucl}})$, where $(N, g_N)$ is a compact Einstein manifold.
\end{enumerate}
\end{thm}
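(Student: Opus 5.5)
We work with the weighted measure $e^{-f}dv$ and the drift Laplacian $\Delta_f = \Delta - \nabla_{\nabla f}$. The aim is to show two things: a nonconstant bounded $f$-harmonic function with finite weighted Dirichlet energy produces a weighted-$L^2$ harmonic $1$-form, hence (via Hodge star) a weighted-$L^2$ harmonic $(n-1)$-form; and any such form is parallel under the curvature hypothesis.

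\emph{Step 1 (ends to harmonic forms).} Suppose $M$ has at least two ends. Since $e^{-f}dv$ has finite total volume and $f$ grows quadratically \cite{CZ2010}, each end is $f$-nonparabolic. The standard Li--Tam construction with respect to $\Delta_f$ then gives a nonconstant bounded $f$-harmonic function $u$ with $\int |\nabla u|^2 e^{-f} < \infty$. Set $\omega = du$. Then $\omega$ is closed and $\delta_f\omega = 0$, and it lies in weighted $L^2$.

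To pass to degree $n-1$, note that $\star$ does not commute with the weighted codifferential. So we work directly with the $(n-1)$-form $\eta = \star du$ and compute its weighted Bochner formula. The relevant quantity is the weighted Hodge Laplacian $\Delta_{H,f}$ on $(n-1)$-forms, with curvature term $\mathcal{R}+\mathrm{Hess} f$ acting on $(n-1)$-forms. Using $\mathrm{Hess} f = \frac12 g - \operatorname{Ric}$, the action on $(n-1)$-forms of the Weitzenböck curvature term $\mathcal{R}$ (equal to $\operatorname{S}-\operatorname{Ric}$ under duality with $1$-forms), together with the derivation extension of $\frac12 g - \operatorname{Ric}$ (equal to $\frac{n-1}{2} - (\operatorname{S}-\operatorname{Ric})$), yields a net term. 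The hypothesis $\operatorname{Ric}-\frac12\operatorname{S}g \ge -\frac{n-1}{4}g$ is exactly what should make this combined term nonnegative. I would verify this algebra first, since the constants $\frac12$ and $\frac{n-1}{4}$ suggest a refined Kato-type improvement, possibly tested against $|\eta|^{1/2}$ or a similar power, rather than the crude Bochner formula. Concretely, I would aim for
\[
\tfrac12\Delta_f|\eta|^2 \ge |\nabla\eta|^2 + \bigl(\operatorname{Ric}-\tfrac12\operatorname{S}+\tfrac{n-1}{4}\bigr)(\cdots) \ge |\nabla \eta|^2 .
\]

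\emph{Step 2 (Liouville and parallelism).} For any weighted-$L^2$ form $\eta$ with $d\eta=0=\delta_f\eta$, multiply the Bochner inequality by $\phi^2$, where $\phi$ is a cutoff in $f$, and integrate against $e^{-f}$. Since $\int|\eta|^2e^{-f}<\infty$, the error term tends to zero, giving $\nabla\eta\equiv 0$ and equality in the curvature term. The hard part is justifying that $L^2_f$-cohomology in degree $n-1$ computes $b_{n-1}$, or at least injects into it. I would take the Hodge/de Rham identification from the paper's weighted $L^2$ theory as given, presumably established in the sections preceding the theorem.

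\emph{Step 3 (conclusion).} A nonzero parallel $(n-1)$-form is dual to a parallel vector field $X$, so de Rham splitting gives $M = N\times\mathbb{R}$ isometrically. On the $\mathbb{R}$-factor the shrinker equation forces $f = t^2/4 + f_N$, and $N$ is a shrinker with potential $f_N$. The equality case in the curvature term should force $\mathrm{Hess} f_N = 0$. Then $f_N$ is constant because $N$ is a shrinker with $\mathrm{Hess} f_N = 0$ and proper $f$, which makes $N$ compact Einstein. Since parallel forms are then unique up to scaling, $b_{n-1}=1$. Otherwise no nonzero such form exists, so $b_{n-1}=0$, and by Step 1 $M$ has one end. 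I expect the curvature computation in Step 1 to be the main obstacle.
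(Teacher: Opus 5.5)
Your Step 1 cannot work, and the one-end conclusion at the end of Step 3 depends on it.

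\textbf{Step 1 produces an object that does not exist.} Since $\int_M e^{-f}dv<\infty$, the weighted manifold $(M,g,e^{-f}dv)$ and each of its ends are $f$-\emph{parabolic}, not $f$-nonparabolic. To see this, let $u$ be bounded with $\Delta_f u=0$, and let $\phi_k$ be a cutoff equal to $1$ on $B(k)$ with $|\nabla\phi_k|\le 1/k$. Then $\int\phi_k^2|\nabla u|^2e^{-f}dv\le 4\sup|u|^2\,k^{-2}\int_M e^{-f}dv\to 0$, so every bounded $f$-harmonic function is constant. Concretely, on $N\times\R$ with $f=t^2/4+\mathrm{const}$, the $f$-harmonic functions of $t$ are multiples of $\int_0^t e^{s^2/4}ds$. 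These are unbounded and have infinite weighted energy, so the Li--Tam function you want does not exist.

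\textbf{Even granting it, $*du$ is not $f$-harmonic.} One has $d(*du)=\pm(\Delta u)\,dv=\pm\langle\nabla f,\nabla u\rangle\,dv$ and $\delta_f(*du)=\pm*(du\wedge df)$. So your Step 2, which needs $d\eta=0=\delta_f\eta$, does not apply. The correct dictionary runs the other way. For $\eta\in\mathcal{H}^{n-1}_f(M)$ one has locally $*\eta=e^{f}du$ with $\operatorname{div}(e^{f}\nabla u)=0$ and $\int|\nabla u|^2e^{f}dv<\infty$. That is, $u$ is harmonic for the \emph{reversed} weight $e^{f}dv$; on the Gaussian line, $u=\int_0^t e^{-s^2/4}ds$.

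\textbf{No function theory is needed.} You already take the weighted Hodge isomorphism $\mathcal{H}^{n-1}_f(M)\cong H^{n-1}_{dR}(M)$ as given. The paper then uses Poincar\'e duality $H^1_c(M)\cong H^{n-1}_{dR}(M)^*$ and the elementary bound $\#\,\mathrm{ends}(M)\le \dim H^1_c(M)+1$. Hence $b_{n-1}(M)=0$ forces one end.

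\textbf{Other steps that need repair.}

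The curvature bookkeeping is off. $\mathcal{R}$ commutes with $*$, so on $(n-1)$-forms $\langle\mathcal{R}(\eta),\eta\rangle=\operatorname{Ric}(*\eta,*\eta)$, not $\operatorname{S}-\operatorname{Ric}$. It is the Hessian term $\mathcal{F}$ that contributes $\sum_{s\neq j}f_{ss}=\frac{n-1}{2}-\operatorname{S}+R_{jj}$. In a frame diagonalizing $\operatorname{Ric}$ the total is $\sum_j\bigl(\frac{n-1}{2}+2R_{jj}-\operatorname{S}\bigr)|\eta_{1\cdots\hat{j}\cdots n}|^2$. This is exactly twice the hypothesis, so the plain Bochner argument suffices and no Kato refinement enters. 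Your version would give $\frac{n-1}{2}$ independently of the hypothesis, which signals the error.

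To invoke the Hodge theorem you must check its hypothesis. $\operatorname{S}\ge0$ together with the trace of the assumed bound gives $\operatorname{S}\le\frac{n(n-1)}{2(n-2)}$ for $n\ge3$, hence bounded Ricci curvature. The case $n=2$ follows from the classification.

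In the equality case, the parallel field $X=(*\eta)^\sharp$ has $\operatorname{Ric}(X,X)=0$, so $\operatorname{S}\equiv\frac{n-1}{2}$. Feeding this into Bochner on $N$, with $\Delta_Nf_N=0$ and $|\nabla f_N|^2=f_N-\frac{n-1}{2}$, is what actually gives $\nabla\nabla f_N=0$. Myers then gives compactness of $N$.

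Finally, de Rham's theorem only splits the universal cover. To split $M$ itself along $X$ you need $X^\flat=dh$, which holds because $\pi_1(M)$ is finite (Wylie), so $b_1(M)=0$. The paper instead splits $\tilde M=\tilde N\times\R^k$ and rules out $k\ge2$ via a volume-growth ODE for constant $\operatorname{S}$. It then lets the finite deck group act trivially on the $\R$ factor.
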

Thus we confirm Munteanu and Wang's speculation under this particular lower bound assumption on the Einstein tensor $\operatorname{Ric} - \frac{1}{2}\operatorname{S}g$. This assumption is satisfied, for instance, by product shrinkers $N^{n-k}\times \R^k$, where $N^{n-k}$ is Einstein, and $k \geq 1$. From the proofs of Theorems \ref{thm-intro:  Betti numbers of gradient Ricci shrinkers} and \ref{thm-intro: number of ends}, we see that the lower bound of the Einstein tensor controls $b_{n-1}(M)$, and hence controls the number of ends for a gradient Ricci shrinker.

To control other Betti numbers, assumptions involving the curvature operator $\operatorname{Rm}$ are needed, see \eqref{eqn: curvature operator} for the definition. We obtain the following vanishing theorem.
\begin{thm}\label{thm-intro: vanishing of betti numbers}
Let $(M^n, g, f)$ be a complete gradient Ricci shrinker satisfying \eqref{eqn: Ricci shrinker equation}, and let $\operatorname{Rm}$ be the curvature operator. For $1\leq p \leq n-1$,
\begin{enumerate}
\item if $(p-1)\operatorname{Rm} < \frac{1}{2 }$, then $b_p(M) = 0$;
\item if $(p-1)\operatorname{Rm} \leq \frac{1}{2 }$, then $b_p(M) \leq \binom{n}{p}$, and $L^2(dv_f)$-integrable $f$-harmonic $p$-forms are parallel.
\end{enumerate}
Moreover, for noncompact $M$, if $\operatorname{Rm} < \frac{1}{2(n-2)}$ and the integral homology groups $H_k(M;\Z)$ are torsion-free for $k \geq 2$, then $M$ is contractible.
\end{thm}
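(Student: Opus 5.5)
The plan is to work with the weighted Hodge theory on the smooth metric measure space $(M,g,e^{-f}dv)$. The first ingredient, established later in the paper in the general smooth metric measure space setting, is that every de Rham class in $H^p(M)$ has a representative that is an $L^2(dv_f)$-integrable $f$-harmonic $p$-form. This means $d\omega=0$ and $\delta_f\omega=0$, where $\delta_f=\delta+\iota_{\nabla f}$ is the weighted codifferential. Granting this, $b_p(M)$ is bounded above by the dimension of the space $\mathcal H^p_f$ of such forms. The whole argument then reduces to a Bochner formula for the weighted Hodge Laplacian $\Delta_f=d\delta_f+\delta_f d$. For a $p$-form this reads
\[
\Delta_f\omega=\nabla_f^*\nabla\omega+\mathcal R(\omega)+\nabla\nabla f(\omega),
\]
where $\nabla\nabla f(\omega)$ denotes the derivation action of the Hessian on $p$-forms.

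The next step is to substitute the shrinker equation \eqref{eqn: Ricci shrinker equation}, namely $\nabla\nabla f=\frac12 g-\operatorname{Ric}$. The derivation action of $\frac12 g$ on a $p$-form is $\frac p2$. The Ricci part of $\mathcal R(\omega)$ cancels against the Ricci contribution of the Hessian term. What remains is the pure curvature-operator part of the Weitzenböck term, so that pointwise
\[
\langle \Delta_f\omega,\omega\rangle = \langle\nabla_f^*\nabla\omega,\omega\rangle + \tfrac p2|\omega|^2 - \langle \mathcal W(\omega),\omega\rangle,
\]
where $\mathcal W$ is the $\operatorname{Rm}$-part. By the standard estimate (Gallot--Meyer), $\langle\mathcal W(\omega),\omega\rangle\le p(p-1)\lambda_{\max}(\operatorname{Rm})|\omega|^2$. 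Hence the hypothesis $(p-1)\operatorname{Rm}\le\frac12$ gives
\[
\langle\mathcal W(\omega),\omega\rangle\le \tfrac p2|\omega|^2,
\]
with strict inequality when $(p-1)\operatorname{Rm}<\frac12$ and $\omega\ne0$. The sign convention for $\operatorname{Rm}$ in \eqref{eqn: curvature operator} has to be matched here.

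The third step is to integrate against $e^{-f}$ for $\omega\in\mathcal H^p_f$, which gives
\[
0=\int|\nabla\omega|^2e^{-f}+\int\Big(\tfrac p2|\omega|^2-\langle\mathcal W\omega,\omega\rangle\Big)e^{-f}.
\]
This step is the main technical obstacle, because the integration by parts must be justified on a noncompact $M$ without assuming $\nabla\omega\in L^2(dv_f)$ a priori. I would handle it with cutoffs $\phi_R=\phi(f/R)$, using $|\nabla f|^2\le f$ and the quadratic growth of $f$ from \cite{CZ2010}. One multiplies the identity by $\phi_R^2$ and absorbs the cross term $2\int\phi_R|\nabla\phi_R||\omega||\nabla\omega|e^{-f}$ by Cauchy--Schwarz. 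Since $|\nabla\phi_R|\le C/\sqrt R$ and $\omega\in L^2(dv_f)$, letting $R\to\infty$ gives $\nabla\omega\in L^2(dv_f)$ and the identity above. Under (i) the identity forces $\omega\equiv0$, so $b_p=0$. Under (ii) it forces $\nabla\omega=0$, so $\omega$ is parallel and is determined by its value at one point. Therefore $\dim\mathcal H^p_f\le\binom np$.

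For the contractibility statement, suppose $\operatorname{Rm}<\frac1{2(n-2)}$. Then $(p-1)\operatorname{Rm}<\frac12$ for every $p\le n-1$, so $b_p=0$ for $1\le p\le n-1$. Since $M$ is noncompact, $H_n(M;\Z)=0$. The fundamental group $\pi_1(M)$ is finite by \cite{Wyl08}. Next I would pass to the universal cover $\tilde M$, which is a finite cover and hence again a noncompact shrinker with the same curvature bound. Applying the vanishing result on $\tilde M$ gives $b_p(\tilde M)=0$ for $1\le p\le n-1$. Combined with the torsion-freeness assumption (which must be applied on $\tilde M$, or transferred there), this yields $\tilde H_*(\tilde M;\Z)=0$. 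By Hurewicz and Whitehead, $\tilde M$ is contractible. A nontrivial finite group cannot act freely on a finite-dimensional contractible space, since such a space has no torsion in its group cohomology. So $\pi_1(M)$ is trivial and $M=\tilde M$ is contractible.

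Transferring the torsion-free hypothesis from $M$ to the cover is the second delicate point. Alternatively one can argue directly: $H_1(M;\Z)$ is finite, and together with vanishing rational cohomology and the torsion hypothesis this forces $\tilde H_*(M;\Z)=0$ apart from a finite $H_1$. Killing $\pi_1$ then needs the free-action argument above.
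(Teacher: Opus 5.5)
The Bochner part for (i) and (ii) is the paper's argument: on a shrinker $\mathcal R_f=\frac p2-\mathcal W$, and the $\operatorname{Rm}$-part is at most $p(p-1)\lambda_{\max}(\operatorname{Rm})|\omega|^2$. Your cutoff justification of the integration by parts is more careful than the paper's ``standard arguments''.

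\textbf{The gap: killing $\pi_1(M)$.} The genuine gap is in the contractibility part, at exactly the step you flag. The torsion hypothesis is on $M$ only, and you give no way to transfer it to $\tilde M$. Your fallback (a nontrivial finite group cannot act freely on a finite-dimensional contractible space) again needs $\tilde M$ contractible. So $\pi_1(M)$ is never actually shown to be trivial.

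The missing idea is the Euler characteristic, which is how the paper argues.
\begin{enumerate}
\item Since $\operatorname{Rm}$ is bounded (see below), $\operatorname{S}$ is bounded, and $|\nabla f|^2=f-\operatorname{S}>0$ for $f$ large. So $M$ deformation retracts onto a compact sublevel set of $f$ and has the homotopy type of a finite CW complex. The same holds for the finite cover $\tilde M$.
\item Apply the vanishing theorem to $M$ and to $\tilde M$ with the lifted shrinker structure. Both have real Betti numbers $b_0=1$ and $b_p=0$ for $p\ge1$, so $\chi(M)=\chi(\tilde M)=1$.
\item Multiplicativity $\chi(\tilde M)=|\pi_1(M)|\,\chi(M)$ then forces $\pi_1(M)=1$.
\end{enumerate}
This uses only real cohomology, so nothing about torsion in $H_*(\tilde M;\Z)$ is needed. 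Once $M$ is simply connected, the hypothesis on $M$ itself gives $\tilde H_*(M;\Z)=0$, and Hurewicz applied inductively plus Whitehead finish.

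\textbf{A smaller omission: the Hodge theorem is conditional.} You treat the existence of $L^2(dv_f)$ $f$-harmonic representatives of de Rham classes as unconditional, but it is not. The paper's construction of the $L^2(dv_f)$ form $r^*i^*\omega$ needs $\nabla f\neq 0$ near infinity and the Hessian condition (iii) for $q=p-1,p$. For shrinkers the paper secures these via $|\operatorname{Ric}|\le\frac{1}{5n}f$ outside a compact set.

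You have to extract this from the hypothesis, as the paper does:
\begin{enumerate}
\item By \cite{Chen2009}, $\operatorname{tr}\operatorname{Rm}=\frac12\operatorname{S}\ge 0$.
\item So an upper bound $K$ on the eigenvalues of $\operatorname{Rm}$ also gives the lower bound $-\bigl(\binom{n}{2}-1\bigr)K$.
\item Hence $\operatorname{Rm}$ and $\operatorname{Ric}$ are bounded, and the Hodge theorem for shrinkers applies.
\end{enumerate}
For $p=1$ the hypothesis $(p-1)\operatorname{Rm}<\frac12$ is vacuous, so this route gives nothing there. In that case take $b_1=0$ from finiteness of $\pi_1$ \cite{Wyl08}. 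Your cutoff argument still shows that every $L^2(dv_f)$ $f$-harmonic $1$-form vanishes, since $\langle\mathcal R_f\omega,\omega\rangle=\frac12|\omega|^2$.
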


This theorem makes sharp use of the Ricci shrinker equation~\eqref{eqn: Ricci shrinker equation}: the assumptions guarantee that the curvature term in the Weitzenböck formula for the weighted Hodge Laplacian is nonnegative. In the last claim, the non-compactness assumption rules out the compact case $f = \text{constant}$ and $\operatorname{Ric} = \frac{1}{2}g$. (In fact, if $\operatorname{Ric} \geq \frac{1}{2}$ and $\operatorname{Rm} < \frac{1}{2(n-2)}$, writing $\operatorname{Ric}(e_i,e_i) = \sum_{j \neq i} \langle \operatorname{Rm}(e_i \wedge e_j), e_i \wedge e_j\rangle$ in an orthonormal frame shows that the curvature operator is positive, from which the standard topological consequences follow.)

To prove the above theorems, we explore a weighted version of $L^2$-cohomology. For generality, we study it on smooth metric measure spaces; this is not merely a formal abstraction. In fact, we will demonstrate that the tools developed here are also applicable to self-shrinkers of the mean curvature flow (see Theorem \ref{thm: number of ends for MCSSh}). 

Recall that a smooth metric measure space is a triple $(M, g, dv_f := e^{-f} dv)$, where $M$ is a smooth connected manifold of dimension $n$, $g$ is a complete Riemannian metric, and $f$ is a smooth function on $M$. Gradient Ricci shrinkers naturally fall into this category.

The Hodge theorem is a classic tool for studying the topology of compact Riemannian manifolds through analytic methods. However, it does not generally hold for complete noncompact manifolds. A natural remedy is to restrict attention to $L^2$-integrable differential forms, leading to the theories of $L^2$-cohomology and $L^2$-harmonic forms, which were first introduced in \cite{APS75} and have been studied by many authors. Since our aim is not to provide a comprehensive survey, we refer the reader to \cite{Anderson87}, \cite{Carron2002}, and \cite{Dai2011} for excellent introductions to the theory of $L^2$-cohomology on complete Riemannian manifolds.

In general, $L^2$-cohomology groups may be infinite-dimensional; even when they are finite-dimensional, they may depend on the chosen metric. Thus, a fundamental problem is to interpret the topological meaning of $L^2$-cohomology groups. This typically requires precise control of the geometry at infinity. For Ricci shrinkers, although useful information is provided by \cite{MW2015b}, \cite{MW2017b} and \cite{MW2019}, the geometry at infinity remains far from well understood even for some known examples~\cite{BCCD}.

A natural question is whether one can modify the $L^2$-norm by a weight so that the resulting weighted $L^2$-cohomology recovers the ordinary de Rham cohomology. There are situations where this is possible; see, for example, \cite{Bue99}, \cite{Bullock02}, \cite{Ye04}. However, none of the existing theories in the literature are directly applicable to gradient Ricci shrinkers, since they are tailored to different geometric settings. Ahmed and Stroock \cite{AS00} developed a general weighted Hodge theorem for a class of smooth metric measure spaces, but their assumptions rule out Ricci shrinkers (see Remark~\ref{rem: remark about AS00}).

To overcome these difficulties, we develop Agmon-type estimates for weighted harmonic forms, both on $M$ and on sublevel sets of $f$ with Neumann boundary conditions, under assumptions that are tailored for application to shrinkers. These estimates provide growth control for weighted harmonic forms, which allows us to bound a homotopy operator that is critical for establishing the isomorphism between weighted $L^2$-cohomology and de Rham cohomology. We thereby obtain a full Hodge theorem for certain classes of smooth metric measure spaces, see Theorem \ref{thm: identification of reduced L2 cohomology and de Rham cohomology} and Theorem \ref{thm: identification of L2 cohomology and de Rham cohomology}. Theorem \ref{thm-intro: number of ends} and Theorem \ref{thm-intro: vanishing of betti numbers} then follow from this Hodge theorem and topological arguments. Moreover, when the geometry at infinity is potentially complicated and a global Hodge isomorphism is not available, our estimate for weighted harmonic forms with Neumann boundary conditions still yields topological consequences. It enables us to adapt the method of P.~Li \cite{Li80,Li97} to prove uniform upper bounds for the Betti numbers of sublevel sets of $f$, and then topological arguments yield the claim of Theorem~\ref{thm-intro: Betti numbers of gradient Ricci shrinkers}.

We remark that another possible approach is to use Witten's deformed Laplacian on noncompact manifolds \cite{DY23}, but we will not pursue that direction here.

Our weighted Hodge theorem holds under curvature assumptions that are satisfied by a large class of gradient Ricci shrinkers. Let $\mathcal{H}_f^p(M)$ denote the space of $f$-harmonic $p$-forms on $M$ that are $L^2$-integrable with respect to the measure $dv_f$, let $\bar{H}_{(2)}^p(M, dv_f)$ be the reduced weighted $L^2$-cohomology group, see section \ref{section: weighted L2 cohomology} for the definitions, and let $H_{dR}^p(M)$ be the de Rham cohomology group.
\begin{thm}\label{thm-intro: hodge theorem for ricci shrinkers}
Let $(M^n,g, f)$ be a complete gradient Ricci shrinker satisfying $|\operatorname{Ric}| \leq \frac{1}{5n} f$ outside some compact domain, then for each $p$ we have 
\[
\mathcal{H}_f^p(M) \cong \bar{H}^p_{(2)}(M,dv_f) \cong H^p_{dR}(M).
\]
\end{thm}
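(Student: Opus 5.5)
The plan is to reduce the statement to the general weighted Hodge theorems for smooth metric measure spaces (Theorem~\ref{thm: identification of reduced L2 cohomology and de Rham cohomology} and Theorem~\ref{thm: identification of L2 cohomology and de Rham cohomology}). To do this we verify their hypotheses for a shrinker with $|\operatorname{Ric}|\le \frac{1}{5n}f$ near infinity.

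\textbf{Step 1: the first isomorphism.} The isomorphism $\mathcal{H}_f^p(M)\cong \bar H^p_{(2)}(M,dv_f)$ is the weighted Hodge--Kodaira decomposition. It holds on any complete smooth metric measure space, because the weighted Hodge Laplacian $\Delta_f=d\delta_f+\delta_f d$ is essentially self-adjoint on compactly supported forms by completeness (Gaffney's argument). So the real content is the second isomorphism.

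\textbf{Step 2: the Weitzenböck term and the Agmon estimate.} For a shrinker, the weighted Weitzenböck formula reads
\[
\Delta_f\omega=\nabla_f^*\nabla\omega+\mathcal{R}(\omega)+\nabla\nabla f\circledast\omega .
\]
Using $\nabla\nabla f=\frac12 g-\operatorname{Ric}$, the Hessian term acting on $p$-forms equals $\frac p2\omega$ minus a term bounded by $p|\operatorname{Ric}||\omega|$. The Ricci part of $\mathcal{R}$ is likewise bounded by $p|\operatorname{Ric}|$ in size. The pure curvature-operator part of $\mathcal{R}$ is not controlled by $\operatorname{Ric}$, so I would not aim for pointwise positivity. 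Instead, I would use the quantity the general theorem actually needs: an Agmon-type exponential weight $e^{\alpha f}$ with $\alpha<\frac12$. Conjugating $\Delta_f$ by $e^{f/2}$ gives the Witten-type operator, whose potential contains $\frac14|\nabla f|^2-\frac12\Delta f$. By the shrinker identities $|\nabla f|^2=f-\operatorname{S}$ and $\Delta f=\frac n2-\operatorname{S}$, this potential is $\frac14 f-\frac14\operatorname{S}-\frac n4+\frac12\operatorname{S}$. This grows like $f/4$ once $\operatorname{S}\le n|\operatorname{Ric}|\le \frac15 f$. The exact $\frac{1}{5n}$ constant should come from balancing this potential against the curvature terms in the $\omega$-Bochner identity. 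Concretely, the total error $cp|\operatorname{Ric}|+\frac14\operatorname{S}$ must stay below $(\frac14-\epsilon_0)f$, which is condition $(i)$ of Theorem~\ref{thm-intro: Betti numbers of gradient Ricci shrinkers} with some explicit $\epsilon_0$. So I would check that $|\operatorname{Ric}|\le\frac1{5n}f$ implies $\sum_{k>n-p}\lambda_k\le \frac14\operatorname{S}+(\frac14-\epsilon_0)f+c_0$ for all $p$, with $c_0$ absorbing the compact set. This holds since the left side is at most $p\cdot\frac{f}{5n}\le \frac f5$.

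\textbf{Step 3: topology at infinity.} The general Hodge theorem also needs the sublevel sets $\{f<t\}$ to be deformation retracts of $M$ for large $t$, together with a homotopy operator built from the flow of $\nabla f/|\nabla f|^2$. Since $\operatorname{S}\le\frac f5$, we get $|\nabla f|^2=f-\operatorname{S}\ge\frac45 f$, so $f$ has no critical points outside a compact set. Combined with the quadratic growth of $f$ (\cite{CZ2010}), this gives that $M$ is diffeomorphic to the interior of a compact manifold with boundary $\{f=t\}$. The homotopy operator $K$, integrating along the flow, has to be bounded in the weighted $L^2$ norm on the relevant forms. Here the Agmon estimate from Step~2 gives decay of $f$-harmonic forms, and $|\nabla f|\sim\sqrt f$ gives control of the Jacobian of the flow through $\nabla\nabla f=\frac12 g-\operatorname{Ric}$ with $|\operatorname{Ric}|\le f/5n$.

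\textbf{Main obstacle.} I expect the main difficulty to be this Jacobian control. Along the flow, $\frac{d}{dt}$ of the pullback metric involves $\nabla\nabla f/|\nabla f|^2$, which is of size $(\frac12+f/5n)/(\frac45 f)$. This is bounded, but it grows the form norms exponentially in the flow parameter. I would try to show that this growth is dominated by the decay of the weight $e^{-f}$ along the flow, using again the explicit constant. If that is delicate, the fallback is to work only with the Neumann-boundary Agmon estimates on $\{f<t\}$, letting $t\to\infty$ and exploiting that the restriction maps are isomorphisms.
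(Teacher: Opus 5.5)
Your route is the paper's: verify the hypotheses of Theorem~\ref{thm: identification of reduced L2 cohomology and de Rham cohomology} and combine them with the Hodge decomposition. Your Step~2 check is in substance the paper's verification of hypothesis (iv). Lemma~\ref{lem: integral growth estimate - abs boundary condition} integrates by parts with $d$ and $\delta$ rather than with the rough Laplacian, so only $V_f+\mathcal{F}$ enters. The bound $|\operatorname{Ric}|\le \frac{f}{5n}$ then gives $V_f|\omega|^2+\langle\mathcal{F}(\omega),\omega\rangle\ge(\frac{1}{20}f-\frac n4)|\omega|^2$ near infinity. Note that $\operatorname{S}$ enters $V_f=\frac14(f+\operatorname{S}-n)$ with a favorable sign, so it is not an ``error''. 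For the same reason only the reduced theorem is available in general. Theorem~\ref{thm: identification of L2 cohomology and de Rham cohomology} needs a lower bound on $\mathcal{R}_f$, which a Ricci bound cannot give, as you observe; the statement does not need it.

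The gap is hypothesis (iii). You name it as the main obstacle but leave it open (``I would try to show\dots'').

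\emph{Why it is needed.} Without it you do not know that $r^*i^*\omega\in L^2(dv_f)$ (Lemma~\ref{lem: growth of the pullback form}). Then $[\omega]\mapsto[r^*i^*\omega]_{\overline{(2)}}$ is not even defined. Surjectivity also fails: it writes an $f$-harmonic $\eta$ as $r^*i^*\eta+dK\eta$ and needs $r^*i^*\eta\in L^2(dv_f)$.

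\emph{Why a bounded rate is not enough.} Along the flow of $\nabla f/|\nabla f|^2$, the weight $e^{-f}$ decays at rate exactly $1$ per unit increase of $f$. The relevant growth rate combines $|r^*i^*\omega|^2$, the level-set area element and the coarea factor $|\nabla f|^{-1}$. It equals $|\nabla f|^{-2}\bigl(\Delta f-2\nabla_\nu\nabla_\nu f-2\sum_{k\le q}\lambda'_k(\nabla\nabla f)\bigr)$ and must be at most $1-\frac{1+c_0}{f}$.

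\emph{How to close it.} This is a direct computation. Use $\nabla\nabla f=\frac12 g-\operatorname{Ric}$, $\Delta f=\frac n2-\operatorname{S}$, $|\nabla f|^2=f-\operatorname{S}$ and $q\le n-1$:
\[
\Delta f-2\nabla_\nu\nabla_\nu f-2\sum_{k=1}^{q}\lambda'_k(\nabla\nabla f)=\frac n2-1-q-\operatorname{S}+2\operatorname{Ric}(\nu,\nu)+2\sum_{k=1}^{q}\lambda'_{n-k}(\operatorname{Ric})\le \frac n2-\operatorname{S}+\frac25 f .
\]
Since $\operatorname{S}\ge0$, the right side of (iii) satisfies $\bigl(1-\frac{1+c_0}{f}\bigr)(f-\operatorname{S})\ge f-\operatorname{S}-(1+c_0)$. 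So (iii) holds for $q=p-1,p$ once $f$ is large. Any bound $|\operatorname{Ric}|\le\epsilon f$ with $\epsilon<\frac1{2n}$ would do; here the effective rate is about $\frac25$.

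If you write $K$ as an integral along flow lines, as you propose, this same margin below $1$ (with $q=p-1$) also controls it. You combine it with the polynomially weighted Agmon bound and $|\nabla f|\ge\epsilon_0$.

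Your fallback does not replace this step. The Neumann estimates on $\{f\le t\}$ give Theorem~\ref{thm: finite Betti numbers}: uniform bounds on $b_p$ and stabilization of the restriction maps. By themselves they give neither an $L^2(dv_f)$ representative on $M$ for each de Rham class nor the identification with $\mathcal{H}^p_f(M)$.
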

Note that $f$ has quadratic growth on a gradient Ricci shrinker by the well-known work of Cao and Zhou \cite{CZ2010}. All currently known examples of gradient Ricci shrinkers have bounded curvature and are therefore covered by our result. By \cite{LW2024}, K\"ahler Ricci shrinker surfaces have bounded curvature. By \cite{CFSZ2020}, gradient Ricci shrinker singularity models in dimension $4$ have at most quadratic curvature growth. In fact, understanding the curvature growth (or decay) pattern of gradient Ricci solitons remains an interesting problem; see, for example, \cite{CZ2022,MS2013,MSW2019,MW2015b,MW11}.

The identification of de Rham and weighted $L^2$-cohomology enables us to obtain topological information for these noncompact smooth metric measure spaces via Hodge theory. As applications, we obtain estimates of Betti numbers for smooth metric measure spaces (see Corollary \ref{cor: betti number estimates - R_f}) and for gradient Ricci shrinkers (see Corollary \ref{cor: betti number estiamtes for Ricci shrinkers - R_f}), the latter being a variant of Theorem \ref{thm-intro:  Betti numbers of gradient Ricci shrinkers}.

The article is organized as follows. In Section \ref{section: weighted L2 cohomology}, we introduce $L^2$-cohomology, including the necessary adaptations to the weighted case. In Section \ref{section: growth estimates}, we establish Agmon-type estimates for weighted harmonic forms and prove the finite-dimensionality result, Theorem \ref{thm-intro: finite dimension}. Section \ref{section: l2 cohomology and de Rham cohomology} is devoted to studying the topological interpretation of weighted $L^2$-cohomology on smooth metric measure spaces, Theorem \ref{thm-intro: hodge theorem for ricci shrinkers} is proved at the end of this section. Applications to gradient Ricci shrinkers are presented in Section \ref{section: applications}. For the reader's convenience, we restate Theorem \ref{thm-intro:  Betti numbers of gradient Ricci shrinkers}, Theorem \ref{thm-intro: number of ends}, and Theorem \ref{thm-intro: vanishing of betti numbers} as Theorem \ref{thm: estimate of Betti numbers for gradient Ricci shrinkers}, Theorem \ref{thm: number of ends}, and Theorem \ref{thm: vanishing of cohomology with curvature operator upper bounds} (together with Corollary \ref{cor: contractibility of gradient Ricci shrinkers}), respectively, and their proofs are given there. Applications to mean curvature self-shrinkers are discussed at the end of Section \ref{section: applications}.

\textbf{Acknowledgments:} The author would like to thank Teng Huang, Jianyu Ou, Lihan Wang, Junrong Yan, and Bo Zhu for very helpful discussions, and Jiaping Wang and Ovidiu Munteanu for their interest in this work. This work was partially supported by the National Natural Science Foundation of China (Grant No. 12141101).

\section{Weighted $L^2$ cohomology }\label{section: weighted L2 cohomology}
This section collects the necessary definitions and fundamental properties of weighted $L^2$-cohomology, together with some useful calculations. Since the arguments are largely standard, we will omit some proofs and refer to \cite{Bue99}, \cite{Carron2002}, \cite{Schwarz95}, etc. We do give detailed proofs where the weight function plays a significant role.

\subsection{Definitions and basic properties}
Let $(M, g, dv_f)$ be a smooth metric measure space, where $(M, g)$ is a complete Riemannian manifold and $f \in C^\infty(M)$ is a smooth function called the potential function. Denote by $dv$ the Riemannian volume measure, and define the $f$-measure by $dv_f = e^{-f} dv$.

Let $L^2_f\Lambda^p(M)$ denote the space of $p$-forms on $M$ that are $L^2$-integrable with respect to the measure $dv_f$. Since the underlying Riemannian manifold is complete, elements of $L^2_f\Lambda^p(M)$ can be approximated by compactly supported smooth $p$-forms on $M$. The exterior derivative $d$ can be defined as a densely defined operator on $L^2_f\Lambda^p(M)$ in the distributional sense: for $\omega \in L^2_f\Lambda^p(M)$, if there exists $\eta \in L^2_f\Lambda^{p+1}(M)$ such that
\[
\int_M \omega \wedge d \varphi = - \int_M \eta \wedge \varphi , 
\]
for every compactly supported smooth $(n-p-1)$-form $\varphi$, then we say that $\omega$ lies in the domain of $d$ and that $d\omega = \eta$.

 Define 
 $$\Lambda_f^p(M) = \{\omega \in L^2_f\Lambda^p(M): d\omega \in L^2_f\Lambda^{p+1}(M)\},$$
which is the domain of the operator $d_p: \Lambda_f^p(M) \to \Lambda_f^{p+1}(M)$. For simplicity, we will omit the subscript $p$ when there is no confusion.
An important fact is that $d$ (or $d_p$) is a closed operator. This operator $d$ coincides with the closure of the exterior derivative acting on compactly supported smooth differential forms, in particular, $\overline{d \Lambda^p_f(M)} = \overline{d C_0^\infty \Lambda^p(M)}$ (see \cite[Lemma 1.5]{Carron2002}).

We have a cochain complex 
\[
\cdots \xrightarrow{d_{p-1}} \Lambda_f^p(M) \xrightarrow{d_{p}} \Lambda_f^{p+1}(M) \xrightarrow{d_{p+1}} \cdots.
\]
The \textit{unreduced $L^2(dv_f)$-cohomology} is defined as 
\[
H_{(2)}^p(M, dv_f) = {\operatorname{ker}(d_p)} \mathbin{/} {\operatorname{im}(d_{p-1})};
\]
the \textit{reduced $L^2(dv_f)$-cohomology} is defined as
\[
\bar{H}_{(2)}^p(M, dv_f) = {\operatorname{ker}(d_p)} \mathbin{/}{\overline{\operatorname{im}(d_{p-1})}},
\]
where ${\overline{\operatorname{im}(d_{p-1})}}$ denotes closure of the image of $d_{p-1}$. There is a natural surjective map 
\[
H_{(2)}^p(M, dv_f) \to \bar{H}_{(2)}^p(M, dv_f) ; \quad [\omega]_{(2)} \mapsto [\omega]_{\overline{(2)}},
\]
whose kernel is $\overline{\operatorname{im}(d_{p-1})} / \operatorname{im}(d_{p-1})$. Here and in what follows, $[\cdot]_{(2)}$ and $[\cdot]_{\overline{(2)}}$ denote the cohomology classes in $H^*_{(2)}$ and $\bar{H}^*_{(2)}$, respectively. 

The following lemma, due to de Rham \cite{deRham1973} (see the presentation in \cite[Lemma 1.11]{Carron2002}), can be easily generalized to the weighted setting.
\begin{lem}\label{de Rham Lemma}
Let $\omega \in L^2_f\Lambda^p(M)$ be a closed form such that $[\omega]_{\overline{(2)}} = 0$ in the reduced weighted $L^2$-cohomology $\bar{H}_{(2)}^p(M, dv_f)$, then $[\omega] = 0$ in the de Rham cohomology $H_{dR}^p(M) $. 
\end{lem}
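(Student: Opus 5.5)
The plan is to use Poincaré duality between de Rham cohomology and compactly supported cohomology, pairing $\omega$ against closed compactly supported forms. Since $M$ is orientable, a closed smooth $p$-form $\omega$ is exact in $H^p_{dR}(M)$ if and only if
\[
\int_M \omega \wedge \varphi = 0
\]
for every closed compactly supported smooth $(n-p)$-form $\varphi$. This holds because the pairing $H^p_{dR}(M) \times H^{n-p}_c(M) \to \R$ is nondegenerate in the first slot, i.e. $H^p_{dR}(M) \cong (H^{n-p}_c(M))^*$. The form $\omega$ here is only in $L^2_f$ with $d\omega = 0$ distributionally. I will first note that such a closed current defines a de Rham cohomology class; by de Rham's theorem on currents (regularization), the cohomology of closed $L^2_{loc}$ forms agrees with smooth de Rham cohomology. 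The same pairing criterion then applies, with the integral understood in $L^1_{loc}$.

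Next I would use the hypothesis. Since $[\omega]_{\overline{(2)}}=0$, and using $\overline{d\Lambda^{p-1}_f(M)} = \overline{dC_0^\infty\Lambda^{p-1}(M)}$, there are compactly supported smooth forms $\alpha_k$ with $d\alpha_k \to \omega$ in $L^2_f\Lambda^p(M)$. Fix a closed compactly supported $\varphi$ with support in a compact set $K$. Stokes' theorem gives
\[
\int_M d\alpha_k\wedge\varphi = \pm\int_M \alpha_k \wedge d\varphi = 0 .
\]
Then the weight is handled on $K$: there $e^{-f}$ is bounded above and below, so
\[
\Big|\int_M (\omega - d\alpha_k)\wedge\varphi\Big| \le \Big(\sup_K e^{f}\Big)^{1/2}\,\|\omega-d\alpha_k\|_{L^2_f}\,\|\varphi\|_{L^2(K)} \to 0 .
\]
Hence $\int_M \omega\wedge\varphi=0$ for every such $\varphi$, and by duality $[\omega]=0$.

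The only delicate step is the regularity issue: making rigorous that a merely $L^2_{loc}$ closed form has a de Rham class detected by this pairing. To handle it I would either cite de Rham's smoothing operators $R_\epsilon$, which are homotopic to the identity via $R_\epsilon - I = dA + Ad$ and produce a smooth closed $R\omega$ cohomologous to $\omega$ as currents, or simply invoke \cite[Lemma 1.11]{Carron2002}. The weight plays no role there, because everything is local and $e^{-f}$ is locally comparable to $1$.
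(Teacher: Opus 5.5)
Your proposal is correct and follows the paper's own argument. You approximate $\omega$ by $d\alpha_k$ with $\alpha_k$ compactly supported, and you use Stokes' theorem together with the local comparability of $e^{-f}$ and $1$ on the support of a closed compactly supported $(n-p)$-form $\varphi$ to obtain $\int_M \omega\wedge\varphi=0$. You then conclude by Poincar\'e duality $H^p_{dR}(M)\cong (H^{n-p}_c(M))^*$. Your extra remark, that a merely $L^2_{loc}$ closed form has a de Rham class which can be regularized to a smooth cohomologous representative, addresses a point the paper leaves implicit.
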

\begin{proof}
Since $[\omega]_{\overline{(2)}} = 0$, there is a sequence of $(p-1)$-forms $\eta_k \in \Lambda_f^{p-1}(M)$, such that $d \eta_k \to \omega$ in $L^2(dv_f)$-norm, we can further take $\eta_k$ to be compactly supported smooth forms.  

Let $\alpha$ be any closed $(n-p)$-form with compact support $\Omega$. Then
$$
\|\omega - d\eta_k\|_{L^2(dv, \Omega)} \leq e^{\sup_\Omega f} \|\omega - d\eta_k\|_{L^2(dv_f, M)} \to 0,
$$ 
and consequently,
\[
\int_M \omega \wedge \alpha = \lim_{k \to \infty} \int_M d\eta_k \wedge \alpha = \lim_{k \to \infty} \int_M d(\eta_k \wedge \alpha) = 0. 
\]
By Poincar\'e duality, the pairing $H_{dR}^p(M) \times H_{0}^{n-p}(M) \to \mathbb{R}$ given by 
\[
([\alpha], [\beta]) \mapsto \int_M \alpha \wedge \beta
\]
is nondegenerate, where $H_{0}^{n-p-1}(M)$ denotes the compactly supported de Rham cohomology. This implies that $\omega$ is exact. 
\end{proof}

Denote by $\delta$ the $L^2$-adjoint of $d$ with respect to the Riemannian measure $dv$, and let $\delta_f$ be the $L^2$-adjoint of $d$ with respect to the weighted measure $dv_f = e^{-f}dv$. A direct computation shows that 
$$\delta_f = \delta + \iota_{\n f},$$
where $\iota_{\nabla f}$ denotes interior multiplication by the vector field $\nabla f$. It also holds that $\delta_f^2 = 0$, which can be verified by duality or by direct calculation \cite{Bue99}.
We have
$$
(\ker(d_p))^\perp = \overline{\operatorname{im}(\delta_f)_{p+1}}\quad \text{and} \quad (\ker(\delta_f)_p)^\perp = \overline{\operatorname{im}(d_{p-1})},
$$ 
where the orthogonal complements and closures are taken in the space of $L^2(dv_f)$-integrable forms. 

Let $\mathcal{H}_f^{p}(M)$ denote the space of $L^2(dv_f)$-harmonic $p$-forms on $M$, i.e. 
\[
\mathcal{H}_f^{p}(M) = \{\omega \in \Lambda_f^p(M): d \omega = \delta_f \omega = 0\}.
\]
This space is closed in $L^2_f\Lambda^p(M)$ since both $d$ and $\delta_f$ are closed operators. Elements of $\mathcal{H}_f^{p}(M)$ will be called $f$-harmonic $p$-forms on $M$.

Then we have the Hodge decomposition (\cite[Theorem 5.7]{Bue99})
\begin{equation}\label{eqn: Hodge decomposition}
L^2_f\Lambda^p(M) = \mathcal{H}_f^p(M) \oplus \overline{\operatorname{im}(d_{p-1})} \oplus \overline{\operatorname{im}(\delta_f)_{p+1}}.
\end{equation}
Consequently, there is a canonical isomorphism between the space of $f$-harmonic forms and the reduced weighted $L^2$-cohomology:
\begin{equation}\label{eqn: reduced L2 chomology and harmonic forms}
\mathcal{H}_f^p(M) \cong \bar{H}_{(2)}^p(M).
\end{equation}

Define the $f$-Hodge Laplacian operator to be 
\[
\Delta^d_f = d\delta_f + \delta_f d. 
\]
This operator admits a densely defined self-adjoint extension on $\Lambda_f^*(M)$, obtained as the operator associated to the positive semi-definite quadratic form 
$$Q(\alpha) = \int_M (|d\alpha|^2 + |\delta_f  \alpha|^2 )dv_f.$$ 
We denote by $(\Delta^d_f)_p$ its restriction to $p$-forms. Standard argument (\cite[Theorem 5.5]{Bue99}) shows that
$$\ker(\Delta^d_f)_p = \mathcal{H}_f^p.$$

\subsection{Weitzenb\"ock formula for the $f$-Hodge Laplacian.}
We now compute the curvature term appearing in the $f$-Hodge Laplacian. Let $e_1,\dots,e_n$ be an orthonormal frame, and let $\theta^1,\dots,\theta^n$ be the dual orthonormal coframe. A $p$-form $\omega$ can be expressed locally as
\[
\omega = \frac{1}{p !} \sum_{i_1, i_2,..., i_p}\omega_{i_1, i_2,..., i_p} \theta^{i_1} \wedge \theta^{i_2} \wedge \cdots \wedge \theta^{i_p}.
\]
Recall that for the usual (unweighted) Hodge Laplacian, the Weitzenb\"ock formula takes the form
\[
\Delta^d \omega= - \Delta \omega+  \mathcal{R}(\omega),
\]
where $\Delta = -\nabla^*\nabla$ is the rough Laplacian, and $\mathcal{R}$ is a linear operator acting on forms. In terms of the Riemann curvature tensor,
\begin{equation}\label{eqn: curvature term in Hodge Laplacian}
\mathcal{R}(\omega) = \theta^i \wedge \iota_{e_j} R(e_i, e_j) \omega,
\end{equation}
where the action of $R(X,Y)$ on a $p$-form is defined by
\[
\left( R(X, Y) \omega \right) (Z_1,..., Z_p) = \sum_{k = 1}^p \omega(Z_1,..., R(X, Y) Z_k, ..., Z_p).
\]
We adopt the curvature convention that $R_{ijkl} = g(R(e_i, e_j)e_l, e_k)$.

A direct computation shows that 
$$\Delta_f^d = \Delta^d + \mathcal{L}_{\n f},$$ 
where $\mathcal{L}_{\nabla f}$ denotes the Lie derivative in the direction of $\nabla f$. By Cartan's magic formula,
\[
\mathcal{L}_{\n f} \omega =  d i_{\n f} \omega + i_{\n f} d \omega .
\]
We now compute each term explicitly in a local orthonormal coframe. For a $p$-form $\omega$, we have
\[
\begin{split}
d i_{\n f} \omega = & d \left( \frac{1}{p!} \sum_{s=1}^p f_{i_s} \omega_{i_1...i_p} (-1)^{s-1} \theta^{i_1} \wedge \cdots \wedge \widehat{\theta^{i_s}} \wedge \cdots \wedge \theta^{i_p} \right) \\
= & \frac{1}{p!} \sum_{s=1}^p (f_{i_s j} \omega_{i_1...i_p} + f_{i_s}\n_j \omega_{i_1...i_p} ) (-1)^{s-1} \theta^j \wedge \theta^{i_1} \wedge \cdots \wedge \widehat{\theta^{i_s}} \wedge \cdots \wedge \theta^{i_p} \\
= & \frac{1}{p!} \sum_{s=1}^p \sum_j (f_{i_s j} \omega_{i_1...j...i_p} + f_{j}\n_{i_s} \omega_{i_1...j...i_p} )  \theta^{i_1} \wedge \cdots \wedge {\theta^{i_s}} \wedge \cdots \wedge \theta^{i_p}, \\
\end{split}
\]
where in the last line $j$ appears in the $s$-th slot and the hat denotes omission. Similarly, 
\[
\begin{split}
i_{\n f} d \omega = & \frac{1}{p!} \sum_j f_j \n_j \omega_{i_1...i_p} \theta^{i_1} \wedge \cdots \wedge \theta^{i_p} + \frac{1}{p!} \sum_{s=1}^p f_{i_s} \n_j \omega_{i_1...i_p} (-1)^s \theta^j \wedge \theta^{i_1} \wedge \cdots \wedge \widehat{\theta^{i_s}} \wedge \cdots \wedge \theta^{i_p} \\
= & \n_{\n f} \omega - \frac{1}{p!} \sum_{s=1}^p f_{i_s} \n_j \omega_{i_1...i_p}  \theta^{i_1} \wedge \cdots \wedge {\theta^{j}} \wedge \cdots \wedge \theta^{i_p} \\
= & \n_{\n f} \omega - \frac{1}{p!} \sum_{s=1}^p \sum_j  f_{j} \n_{i_s} \omega_{i_1...j...i_p}  \theta^{i_1} \wedge \cdots \wedge {\theta^{i_s}} \wedge \cdots \wedge \theta^{i_p}, \\
\end{split}
\]
where in the last line $j$ occupies the $s$-th position. Adding the two contributions, the terms involving derivatives of the coefficients cancel, yielding
\[
\mathcal{L}_{\n f} \omega = \n_{\n_f} \omega + \frac{1}{p!} \sum_{s=1}^p \sum_j f_{i_s j} \omega_{i_1...j...i_p} \theta^{i_1} \wedge \cdots \wedge \theta^{i_p}.
\]

Combining this with the standard Weitzenb\"ock formula $\Delta^d \omega = -\Delta \omega + \mathcal{R}(\omega)$, we obtain the weighted Weitzenb\"ock formula
\begin{equation}\label{eqn: weighted Weitzenbock formula}
\Delta_f^d \omega  = \Delta^d \omega + \n_{\n f} \omega + \mathcal{F}(\omega)= - \Delta \omega + \n_{\n f} \omega + \mathcal{R}_f(\omega),
\end{equation}
where 
\begin{equation}\label{eqn: R_f}
\mathcal{R}_f(\omega) = \mathcal{R}(\omega) + \mathcal{F}(\omega).
\end{equation}
and $\mathcal{F}(\omega)$ is the curvature-type term arising from the Hessian of $f$:
\begin{equation}\label{eqn: formula for F}
\mathcal{F}(\omega) =  \frac{1}{p!} \sum_{s=1}^p \sum_{j, i_1, ..., i_p = 1}^n f_{i_s j} \omega_{i_1...j...i_p} \theta^{i_1} \wedge \cdots \wedge \theta^{i_p},
\end{equation}
where $f_{ij} = \nabla_i \nabla_j f$ denotes the Hessian of $f$, and in the term $\omega_{i_1\cdots j\cdots i_p}$ the index $j$ appears in the $s$-th position.

We now derive an alternative expression for the curvature term $\mathcal{R}_f$.
\begin{lem} \label{lem: decomposing the curvature term into Ric_f and curvature operator}
Let $\operatorname{Ric}_f = \operatorname{Ric} + \n\n f$, then we can write
\[
\mathcal{R}_f(\omega) = \frac{1}{p!} \left( \sum_{s=1}^p\sum_{j=1}^n (\operatorname{Ric}_f)_{i_s j}  \omega_{ i_1 ... j ... i_p}  - \frac{1}{2}\sum_{t\neq s = 1}^p \sum_{k,l=1}^n R_{ i_t i_s l k } \omega_{i_1 ...  l  ...  k ... i_p}  \right) \theta^{i_1}\wedge \cdots \wedge \theta^{i_p}
\]
where in the first term $j$ appears in the $s$-th slot, and in the second term $l$ and $k$ appear in the $t$-th and $s$-th slots respectively. The indices $i_1,\dots,i_p$ run from $1$ to $n$.
\end{lem}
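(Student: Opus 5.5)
Since $\mathcal{R}_f = \mathcal{R} + \mathcal{F}$ by \eqref{eqn: R_f}, and $\mathcal{F}$ in \eqref{eqn: formula for F} already has the form $\frac{1}{p!}\sum_s\sum_j f_{i_s j}\,\omega_{i_1\dots j\dots i_p}\,\theta^{i_1}\wedge\cdots\wedge\theta^{i_p}$, the statement reduces to the classical component form of the Weitzenb\"ock curvature term:
\[
\mathcal{R}(\omega) = \frac{1}{p!}\Big(\sum_{s}\sum_j R_{i_s j}\,\omega_{i_1\dots j\dots i_p} - \frac12\sum_{t\ne s}\sum_{k,l} R_{i_t i_s l k}\,\omega_{i_1\dots l\dots k\dots i_p}\Big)\theta^{i_1}\wedge\cdots\wedge\theta^{i_p}.
\]
Adding $\mathcal{F}$ then combines $R_{i_sj}+f_{i_sj} = (\operatorname{Ric}_f)_{i_sj}$ and gives the claim. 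So the plan is to expand \eqref{eqn: curvature term in Hodge Laplacian} in the coframe.

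First I will compute $R(e_i,e_j)\omega$ componentwise. With the convention $R_{ijkl} = g(R(e_i,e_j)e_l,e_k)$, and since $R(e_i,e_j)$ acts on a slot as a derivation, each slot contributes a term. Precisely,
\[
(R(e_i,e_j)\omega)_{a_1\dots a_p} = -\sum_{s}\sum_m R_{ij a_s m}\,\omega_{a_1\dots m\dots a_p},
\]
with $m$ in the $s$-th slot. I will fix the sign carefully using $\omega(\dots,R(e_i,e_j)e_{a_s},\dots)$ and $R(e_i,e_j)e_{a_s} = \sum_m R_{ij m a_s} e_m = -\sum_m R_{ija_sm}e_m$.

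Next I apply $\iota_{e_j}$, which puts $j$ in the first slot, and then $\theta^i\wedge$. This produces the $p$-form
\[
\frac{1}{(p-1)!}\sum_i\sum_j (R(e_i,e_j)\omega)_{j a_2\dots a_p}\,\theta^i\wedge\theta^{a_2}\wedge\cdots\wedge\theta^{a_p}.
\]
I then antisymmetrize to write it as $\frac{1}{p!}\sum c_{i_1\dots i_p}\theta^{i_1}\wedge\cdots\wedge\theta^{i_p}$. Here $c_{i_1\dots i_p}$ is a sum over $s$ of $(-1)^{s-1}$ times the expression with $i=i_s$ moved to the front, equivalently a sum of terms in which $j$ replaces $i_s$ in the $s$-th slot.

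The contributions then split into two types. In the first type, the derivation acts on the slot containing $j$. This gives $-\sum_m R_{i_s j j m}\,\omega_{\dots m\dots}$, and contracting over $j$ yields the Ricci term $R_{i_s m}\,\omega_{\dots m\dots}$; I must check that the sign comes out positive under the stated convention. In the second type, the derivation acts on another slot $t\ne s$. This gives $-\sum_{j,m} R_{i_s j i_t m}\,\omega$ with $j$ in slot $s$ and $m$ in slot $t$. The first Bianchi identity, together with the antisymmetry of $\omega$ in the $s,t$ slots, converts $R_{i_sji_tm}$ into $\frac12 R_{i_t i_s l k}$ with the index pattern in the statement. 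The mechanism is that the antisymmetric part in $(j,m)$ of $R_{i_s j i_t m}$ equals $\frac12 R_{i_s i_t j m}$ by Bianchi. I expect the main obstacle to be exactly this sign and factor bookkeeping: the curvature sign convention, the $(-1)^{s-1}$ signs from reordering, and the factor $\frac12$ from Bianchi. As a sanity check I will test the result on $p=1$, where it must reduce to Bochner's $\operatorname{Ric}(\omega)$, and on the round sphere, where the known eigenvalue of the curvature term on $p$-forms is $p(n-p)$.
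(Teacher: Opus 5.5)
Your proposal is correct and follows essentially the paper's route: reduce to the component form of $\mathcal{R}$, extract the Ricci contraction from the terms where the derivation acts on the slot containing $j$, convert the remaining $t\neq s$ terms using the first Bianchi identity and the antisymmetry of $\omega$ in the $s,t$ slots, and then add $\mathcal{F}$. The differences are cosmetic. You derive the component form directly from \eqref{eqn: curvature term in Hodge Laplacian} rather than recalling it, and you use the identity that the $(j,m)$-antisymmetric part of $R_{i_s j i_t m}$ equals $\frac12 R_{i_s i_t j m}$ (which checks out, as does the resulting sign of the Ricci term) instead of the paper's split into $t<s$ and $t>s$ followed by symmetrization.
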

\begin{proof}
Recall from the standard Weitzenb\"ock formula that the curvature term $\mathcal{R}(\omega)$ can be expressed as
\[
\begin{split}
\mathcal{R}(\omega) = & \frac{1}{p!} \sum  R_{i_t l i_s k} \omega_{ i_1 ... {i_{s-1}} k {i_{s+1}}... i_p}   \theta^{i_1}\wedge \cdots\wedge \theta^{i_{t-1}} \wedge \theta^l \wedge \theta^{i_{t+1}} \wedge \cdots \wedge \theta^{i_p} \\
= & \frac{1}{p!} \left( \sum R_{i_s k } \omega_{ i_1 ... {i_{s-1}} k {i_{s+1}}... i_p}  + \sum_{t\neq s} R_{l i_t i_s k } \omega_{i_1 ... i_{t-1} l i_{t+1} ... i_{s-1} k i_{s+1} ... i_p}  \right) \theta^{i_1}\wedge \cdots \wedge \theta^{i_p}, \\
\end{split}
\]
where $R_{i_s k}$ denotes the Ricci curvature components.
For the off-diagonal terms with $t \neq s$, splitting the sum into $t < s$ and $t > s$, and relabeling indices, we obtain
\[
\begin{split}
& \sum_{t\neq s} R_{l i_t i_s k } \omega_{i_1 ... i_{t-1} l i_{t+1} ... i_{s-1} k i_{s+1} ... i_p} \\
= & \sum_{t< s} R_{l i_t i_s k } \omega_{i_1 ... i_{t-1} l i_{t+1} ... i_{s-1} k i_{s+1} ... i_p} + \sum_{t > s} R_{l i_t i_s k } \omega_{i_1 ... i_{s-1} k i_{s+1} ... i_{t-1} l i_{t+1} ... i_p} \\
= & \sum_{t< s} R_{l i_t i_s k } \omega_{i_1 ... i_{t-1} l i_{t+1} ... i_{s-1} k i_{s+1} ... i_p} + \sum_{t < s} R_{l i_s i_t k } \omega_{i_1 ... i_{t-1} k i_{t+1} ... i_{s-1} l i_{s+1} ... i_p} \\
= & \sum_{t< s} (R_{l i_t i_s k } - R_{l i_s i_t k})\omega_{i_1 ... i_{t-1} l i_{t+1} ... i_{s-1} k i_{s+1} ... i_p} \\
= & - \sum_{t< s} R_{l k i_t i_s}\omega_{i_1 ... i_{t-1} l i_{t+1} ... i_{s-1} k i_{s+1} ... i_p},
\end{split}
\]
where the last equality follows from the first Bianchi identity together with the symmetry $-R_{l i_s i_t k} = R_{l i_s k  i_t}$. After symmetrizing over $t$ and $s$, we arrive at 
\[
\mathcal{R}(\omega)= \frac{1}{p!} \left( \sum R_{i_s k } \omega_{ i_1 ... {i_{s-1}} k {i_{s+1}}... i_p}  - \frac{1}{2}\sum_{t\neq s} R_{ i_t i_s l k } \omega_{i_1 ... i_{t-1} l i_{t+1} ... i_{s-1} k i_{s+1} ... i_p}  \right) \theta^{i_1}\wedge \cdots \wedge \theta^{i_p} \\
\]
where in the second sum $l$ and $k$ occupy the $t$-th and $s$-th positions, respectively; the indices $i_1,...,i_p, k, l$ range from $1$ to $n$, and the indices $s, t$ range from $1$ to $p$.

Finally, combining this expression with the formula for $\mathcal{F}(\omega)$ from (\ref{eqn: formula for F}) and using the definition $\operatorname{Ric}_f = \operatorname{Ric} + \nabla\nabla f$, we obtain the desired decomposition.
\end{proof}

\subsection{Spectrum of the $f$-Hodge Laplacian. }

Consider the operator defined on $L^2(dv)$-forms:
\begin{equation}\label{eqn: definition of the operator L}
L = e^{-f/2} \Delta^d_f e^{f/2} = -\Delta + V_f + \mathcal{R}_f, 
\end{equation}
where 
\begin{equation}\label{eqn: definition of V_f}
V_f = \frac{1}{4} |\n f|^2 - \frac{1}{2} \Delta f. 
\end{equation}  
Clearly $\Delta_f^d$ and $L$ are isospectral, with the unitary equivalence given by the multiplication operator $e^{-f/2}: L^2(M, dv_f) \to L^2(M, dv)$. Here, for simplicity, we write $L^2(\cdot)$ to denote the space of $L^2$-integrable forms with respect to the indicated measure, without specifying the degree of the forms.

Since $\Delta_f^d$ is a nonnegative operator, $L$ is also nonnegative. Moreover, $L$ is a Schr\"odinger-type operator. By well-known spectral theory (see \cite[page 120]{RS78}), $L$ has discrete spectrum if the potential (understood here as an operator) grows sufficiently fast — specifically, if its lower bound tends to positive infinity at geometric infinity.
\begin{lem}\label{lem: discrete spectrum}
Let $\lambda: M \to \mathbb{R}$ be a continuous function such that for every $p$-form $\alpha$,
\[
V_f(x)|\alpha|^2(x) + \langle \mathcal{R}_f(\alpha) , \alpha \rangle (x) \geq \lambda (x) |\alpha|^2(x), \quad \forall x\in M, 
\] 
and assume $\lambda(x) \to +\infty$ as $x \to \infty$. Choose a constant $\bar{\lambda}$ such that $\bar{\lambda} + \lambda(x) > 0$ for all $x$ (possible because $\lambda$ is bounded below on compact sets and diverges to $+\infty$). Then the operator $(\bar{\lambda} + L)^{-1}$ is compact; consequently, $L$ has purely discrete spectrum.
\end{lem}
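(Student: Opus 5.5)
The approach is the standard Rellich-type compactness criterion for Schrödinger operators, adapted to forms. We work with the quadratic form of $L$ on $L^2(dv)$. For $\phi = e^{-f/2}\alpha$ with $\alpha$ compactly supported and smooth, the unitary equivalence gives $\langle L\phi,\phi\rangle_{L^2(dv)} = Q(\alpha)$. Integrating the Weitzenb\"ock formula \eqref{eqn: weighted Weitzenbock formula} against $\phi$ after conjugation, as in \eqref{eqn: definition of the operator L}, we get
\[
\langle L\phi,\phi\rangle = \int_M \left(|\nabla\phi|^2 + V_f|\phi|^2 + \langle \mathcal{R}_f\phi,\phi\rangle\right) dv \geq \int_M \left(|\nabla \phi|^2 + \lambda |\phi|^2\right) dv .
\]
Hence the form of $\bar\lambda + L$ dominates $\int (|\nabla\phi|^2 + (\bar\lambda+\lambda)|\phi|^2)\,dv$, and $\bar\lambda + \lambda > 0$. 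By completeness, compactly supported smooth forms are a form core. Consequently $\bar\lambda + L$ is invertible with a bounded inverse, and its form domain embeds continuously into the weighted Sobolev-type space $W$ with norm $\|\phi\|_W^2 = \int (|\nabla\phi|^2 + (\bar\lambda+\lambda)|\phi|^2)\,dv$.

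Compactness of $(\bar\lambda+L)^{-1}$ is equivalent to compactness of $(\bar\lambda+L)^{-1/2}$. That in turn follows once we show the unit ball of the form domain is precompact in $L^2(dv)$, and for this it suffices that the inclusion $W \hookrightarrow L^2(dv)$ is compact. Take a sequence $\phi_k$ bounded in $W$, say $\|\phi_k\|_W \le 1$, and let $\epsilon > 0$.

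1. Since $\lambda \to +\infty$, choose a compact set $K$ with $\bar\lambda + \lambda \geq \epsilon^{-1}$ off $K$. Then $\int_{M\setminus K} |\phi_k|^2\,dv \leq \epsilon$ uniformly in $k$.
2. On a slightly larger compact domain $K' \supset K$ with smooth boundary, $\bar\lambda+\lambda$ is bounded below by a positive constant. So $\phi_k$ is bounded in $W^{1,2}(K')$; here Kato's inequality, or working componentwise in finitely many charts, reduces this to functions.
3. By Rellich's theorem on $K'$, a subsequence converges in $L^2(K)$.
4. A diagonal argument over $\epsilon = 1/m$ then gives a subsequence that is Cauchy in $L^2(M,dv)$.

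This proves that $W \hookrightarrow L^2$ is compact, so $(\bar\lambda+L)^{-1}$ is compact. The spectral theorem for compact self-adjoint operators then shows that $L$ has purely discrete spectrum: eigenvalues of finite multiplicity accumulating only at $+\infty$. By the unitary equivalence, the same holds for $\Delta^d_f$.

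The main obstacle is the identification of the operator domain in the first step. We must make sure that the self-adjoint realization of $L$, namely the one obtained from $Q$ by conjugation, has its form domain controlled by the closure of $\int (|\nabla\phi|^2 + (\bar\lambda+\lambda)|\phi|^2)$ from $C_0^\infty$. I would handle this by proving the pointwise integrated identity on $C_0^\infty$ forms via integration by parts; no boundary terms arise, thanks to compact support. Then I would pass to closures, using that $C_0^\infty$ forms are dense in the form domain of $Q$ by completeness, which was already used for $d$ and $\delta_f$. The rest is the routine Rellich argument of \cite[page 120]{RS78}.
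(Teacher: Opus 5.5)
Your proposal is correct and follows essentially the same route as the paper: a uniform energy bound $\int_M \left(|\nabla\phi|^2 + (\bar\lambda+\lambda)|\phi|^2\right) dv \lesssim 1$, a uniform tail estimate off a compact set coming from $\lambda\to+\infty$, Rellich on compact smooth domains, and a diagonal argument. The differences are only in presentation. You phrase the argument as compactness of the form-domain embedding (equivalently, of $(\bar\lambda+L)^{-1/2}$) rather than applying $(\bar\lambda+L)^{-1}$ to a bounded sequence $\beta_n$. You also make explicit the form-core and closure step that the paper leaves implicit under ``integration by parts.''
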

\begin{proof}
The proof follows standard argument, we include it here for readers' convenience. 

For $\alpha \in W^{1,2}(M, dv)$, define the quadratic form
\[
Q(\alpha) : = \int_M \left( |\n \alpha|^2 + V_f|\alpha|^2 + \langle \mathcal{R}_f(\alpha), \alpha\rangle \right) dv. 
\]
Let $\{\beta_n\}_{n=1}^\infty$ be a sequence of $p$-forms with $\|\beta_n\|_{L^2(M,dv)} = 1$, and let $\alpha_n$ satisfy $(\bar{\lambda} + L)\alpha_n = \beta_n$, $n = 1,2,\dots$. We will show that $\{\alpha_n \}$ is precompact in $L^2(M,dv)$.

From the definition of $L$ and integration by parts,
\[
\bar{\lambda} \|\alpha_n\|_{L^2(M, dv)}^2 + Q(\alpha_n) = (\alpha_n, \beta_n) \leq \epsilon \|\alpha_n\|_{L^2(M, dv)}^2 + \epsilon^{-1}\|\beta_n\|_{L^2(M, dv)}^2,   
\]
for any $\epsilon > 0$. Using the assumption $V_f|\alpha|^2 + \langle \mathcal{R}_f(\alpha),\alpha\rangle \ge \lambda |\alpha|^2$, we obtain  
\[
\bar{\lambda} \|\alpha_n\|_{L^2(M, dv)}^2 + Q(\alpha_n) \geq \inf_M(\bar{\lambda} +  \lambda(x)) \int_M |\alpha_n|^2 dv + \int_M |\n \alpha_n|^2 dv.
\]
Since $\lambda(x) \to \infty$ as $x\to \infty$, there is a positive constant $\epsilon_0$ such that $\bar{\lambda}+\lambda>\epsilon_0$ uniformly. Take $\epsilon < \epsilon_0$, the above inequalities yield uniform $W^{1,2}$-bounds on ${\alpha_n}$ and uniform bounds on $Q(\alpha_n)$,
\[
 \|\alpha_n\|_{W^{1,2}(M, dv)} \leq C \quad and \quad  Q(\alpha_n) \leq C, \quad \forall n. 
\]
Let $M_R = \{x \in M: \lambda(x) \leq R\}$. By $\lambda(x) \to +\infty$, each $M_R$ is compact for all $R$. On $M_R$, $\lambda$ is bounded below, so we can write
\[
Q(\alpha_n) \geq - \sup_{x \in M_R}\lambda_-(x)\int_{M_R} |\alpha_n|^2 dv+ R \int_{M \setminus M_R} |\alpha_n|^2 dv,
\]
where $\lambda_-(x)$ is a bounded compactly supported function. 

Rearranging the above inequality, we observe that for any $\epsilon > 0$, we can choose $R$ sufficiently large such that 
\begin{equation}\label{eqn: concentration of L2 integral}
 \int_{M \setminus M_R} |\alpha_n|^2 dv \leq \frac{Q(\alpha_n)}{R} + \frac{\sup_{x \in M_R}\lambda_-(x)}{R} \|\alpha_n\|_{L^2(M, dv)} < \epsilon, \quad \forall n. 
\end{equation}
Take a sequence $\epsilon_i \to 0$ and corresponding $R_i \to \infty$. Since $\lambda(x)$ is merely continuous, the sublevel sets $M_{R_i}$ may not have smooth boundary. To remedy this, we can approximate $\lambda$ by a smooth function $\tilde{\lambda}$, and choose $R_i$ to be regular values of $\tilde{\lambda}$. Then the sets $\widetilde{M}_{R_i} = \{x : \tilde{\lambda}(x) \le R_i\}$ are compact domains with smooth boundary, and they exhaust $M$ as $i \to \infty$. So, without loss of generality we can assume $M_{R_i}$ to have smooth boundary. By Rellich theorem $W^{1,2}(M_{R_i},dv) \hookrightarrow L^2(M_{R_i}, dv)$ is compact for each $R_i$, hence we can extract a diagonal subsequence $\alpha_{n_i}$ which is convergent in each $L^2(M_{R_i})$. The uniform tail estimate above then implies ${\alpha_{n_i}}$ is Cauchy in $L^2(M)$. Thus ${(\bar{\lambda}+L)^{-1}\beta_n}$ has a convergent subsequence, proving compactness of the resolvent and hence discreteness of the spectrum of $L$.
\end{proof}

Consequently we can prove the following:
\begin{lem}\label{lem: identification of reduced and unreduced L2 cohomology} Under the assumption of Lemma \ref{lem: discrete spectrum}, the image of $d_{p-1}$ is closed. Consequently, $H_{(2)}^p(M, dv_f)$ is isomorphic to $\bar{H}_{(2)}^p(M, dv_f)$.
\end{lem}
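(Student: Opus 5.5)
Since $L$ has purely discrete spectrum by Lemma~\ref{lem: discrete spectrum}, and $\Delta^d_f$ is unitarily equivalent to $L$ via multiplication by $e^{-f/2}$, the operator $\Delta^d_f$ on $L^2_f\Lambda^*(M)$ also has discrete spectrum. In particular $0$ is either not in the spectrum or an isolated eigenvalue of finite multiplicity. Hence there is a spectral gap: a constant $\lambda_1>0$ such that
\[
Q(\alpha)=\int_M \left(|d\alpha|^2+|\delta_f\alpha|^2\right)dv_f \;\ge\; \lambda_1 \|\alpha\|^2_{L^2(dv_f)}
\]
for every $\alpha$ in the form domain that is orthogonal to $\ker(\Delta^d_f)=\mathcal{H}_f$. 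The plan is to turn this gap into a closed-range statement for $d$. The gap must be used in degree $p-1$, so I take the hypothesis of Lemma~\ref{lem: discrete spectrum} to hold for $(p-1)$-forms, or for all degrees. This is the point to check against how the lemma is applied later.

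Let $\omega=\lim d\eta_k$ be in $\overline{\operatorname{im}(d_{p-1})}$. I replace each $\eta_k$ by its projection onto $(\ker d_{p-1})^\perp=\overline{\operatorname{im}(\delta_f)_p}$; this does not change $d\eta_k$. Such a projected $\eta$ satisfies $\delta_f\eta=0$, since $\delta_f^2=0$ and $\delta_f$ is closed. It is also orthogonal to $\mathcal{H}_f^{p-1}\subset\ker d$. The step needing care is that $\eta$ lies in the form domain of $Q$, i.e.\ in $\operatorname{dom}(d)\cap\operatorname{dom}(\delta_f)$, so that $Q(\eta)=\|d\eta\|^2$. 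This holds because $\eta\in\operatorname{dom}(d)$, and $\eta$ is a limit of elements $\delta_f\beta$, so $\delta_f\eta=0$ by closedness.

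The gap then gives $\|d\eta\|^2\ge\lambda_1\|\eta\|^2$ for all such $\eta$. It follows that $\{\eta_k\}$ is Cauchy whenever $\{d\eta_k\}$ is. Let $\eta=\lim\eta_k$. Since $d$ is a closed operator, $\eta\in\Lambda_f^{p-1}(M)$ and $d\eta=\omega$. Therefore $\operatorname{im}(d_{p-1})$ is closed.

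Once the image is closed, the kernel $\overline{\operatorname{im}(d_{p-1})}/\operatorname{im}(d_{p-1})$ of the natural surjection $H^p_{(2)}\to\bar H^p_{(2)}$ vanishes, which gives the claimed isomorphism. The main obstacle I expect is making the spectral gap rigorous on the exact subspace used, $\overline{\operatorname{im}\delta_f}\cap\operatorname{dom}(d)$. To handle it I would use the functional calculus: for $\alpha\perp\ker\Delta^d_f$ in the form domain, $Q(\alpha)=\|(\Delta^d_f)^{1/2}\alpha\|^2\ge\lambda_1\|\alpha\|^2$, where $\lambda_1$ is the smallest nonzero eigenvalue, which exists by discreteness.
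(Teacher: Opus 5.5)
Your proof is correct and is essentially the paper's argument. You project $\eta_k$ onto $(\ker d_{p-1})^\perp=\overline{\operatorname{im}(\delta_f)_p}$, use the spectral gap of $\Delta^d_f$ there (where $Q(\eta)=\|d\eta\|^2$) to make the projections Cauchy, and conclude by closedness of $d$; like you, the paper's proof in fact uses the gap on $(p-1)$-forms.

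Your caveat is well placed, since Theorem~\ref{thm: identification of L2 cohomology and de Rham cohomology} only supplies the degree-$p$ hypothesis, but that hypothesis also suffices. The degree-$p$ gap gives $\|\delta_f\omega\|^2\ge c\|\omega\|^2$ for $\omega\in\overline{\operatorname{im}(d_{p-1})}\cap\operatorname{dom}(\delta_f)$, since such $\omega$ are closed and orthogonal to $\mathcal{H}^p_f$. Hence $\operatorname{im}(\delta_f)_p$ is closed, and so $\operatorname{im}(d_{p-1})$ is closed by the closed range theorem.
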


\begin{proof}
Since $\Delta_f^d$ is nonnegative and has discrete spectrum, there exists a constant $c > 0$ such that for any $\alpha \in (\ker d_{p-1})^\perp$,
\[
c \|\alpha\|^2_{L^2(dv_f)} \leq  (\Delta_f^d \alpha, \alpha)  = \|d\alpha\|^2_{L^2(dv_f)},
\]
where we have used that $(\ker d_{p-1})^\perp = \overline{\operatorname{im}((\delta_f)_p)}$, and on this subspace $\Delta_f^d$ restricts to $\delta_f d$.

Now suppose $\{\alpha_n\} \subset L^2_f\Lambda^{p-1}(M)$ is a sequence such that $d\alpha_n \to \beta$ in $L^2_f\Lambda^p(M)$, we need to prove that $\beta \in \operatorname{im}(d_{p-1})$. We can take $\alpha_n'$ to be the projection of $\alpha_n$ onto $(\ker d_{p-1})^\perp$, which satisfies $d\alpha'_n = d\alpha_n$. Then
\[
\|\alpha'_n - \alpha'_m\|^2_{L^2(dv_f)} \leq c^{-1} \|d\alpha'_n - d\alpha'_m\|^2_{L^2(dv_f)}
\]
implies that ${\alpha'_n}$ is Cauchy. Hence $\alpha' = \lim_{n\to\infty} \alpha'_n$ exists in $L^2_f\Lambda^p(M)$. By the closedness of $d$, we have $d\alpha' = \beta$, so $\beta \in \operatorname{im}(d_{p-1})$. Thus $\operatorname{im}(d_{p-1})$ is closed.
\end{proof}

If we have a positive Bakry-Emery lower bound $\operatorname{Ric}_f \geq a g$ for some $a> 0$, there is an alternative approach to proving discreteness of the spectrum. Indeed, by the argument of Hein-Naber \cite{HN14} and Cheng-Zhou \cite{CZ17}, we can show that $W^{1,2}(M, dv_f)$ is compactly embedded in $L^2(M, dv_f)$. The same method yields that
\[
W^{1,2}(\Lambda^p, dv_f) = \{\alpha \in \Lambda^p(M): \int_M |\alpha|^2 + |\n \alpha|^2 dv_f\}
\]
is compactly embedded in the space of $L^2(dv_f)$-forms. 

However, this alone is not sufficient to show the discreteness of the spectrum of $(\Delta^d_f)_p$ for general $p$. The reason is that the space of forms with finite energy,
\[
\{\alpha \in \Lambda^p(M): \int_M |\alpha|^2 + |d \alpha|^2 + |\delta_f \alpha|^2 dv_f< \infty\}
\]
may be larger than $W^{1,2}(\Lambda^p, dv_f)$. Recall that the Weitzenb\"ock formula gives
\[
\int_M |\alpha|^2 + |d \alpha|^2 + |\delta_f \alpha|^2 dv_f = \int_M |\alpha|^2 + |\n \alpha|^2 + \langle \mathcal{R}_f(\alpha), \alpha \rangle dv_f.
\]
Therefore, a uniform lower bound on $\mathcal{R}_f$, in addition to the positive Bakry–Emery lower bound, suffices to ensure that $\Delta_f^d$ has purely discrete spectrum. This extra condition guarantees that the energy form associated to $\Delta_f^d$ dominates the $W^{1,2}$-norm, allowing the compact embedding to take effect.

\subsection{$f$-harmonic forms on bounded domains}

We will also need the theory of weighted harmonic forms on compact domains. In the sequel, we let $\Omega \subset M$ be a compact sublevel set of $f$ with smooth boundary, i.e., $\Omega = \{ f \leq R \}$ for some regular value $R$ of $f$. In the unweighted setting, the space of $L^2$-harmonic forms satisfying the Neumann (absolute) boundary condition is isomorphic to the absolute de Rham cohomology of $\Omega$ (see, e.g., \cite[Chapter~2]{Schwarz95}). The weighted case is entirely analogous; we present the necessary modifications below.

For a differential form $\alpha$ on $\Omega$, let $i:\partial\Omega\hookrightarrow\Omega$ be the inclusion, then $\bm{t} \alpha = i^* \alpha$ is the tangential part of $\alpha$; let $\bm{n } \alpha = \alpha - \bm{t} \alpha $ be the normal part. It is evident that $d\bm{t} = \bm{t} d$, and there is a relation $* \bm{n} = \bm{t} *$ since the Hodge star swaps tangential and normal components. Then by $*\delta =  (-1)^p d *$ (where $p$ is the degree of the form on which it acts) we can derive that $\bm{n} \delta = \delta \bm{n}$. See \cite[Prop.~1.2.6]{Schwarz95} for detailed proofs of the properties of $\bm{t}$ and $\bm{n}$.

Recall the Green's formula for differential forms \cite[Prop.~2.1.2]{Schwarz95}, 
\begin{equation}\label{eqn: Green's formula}
\int_\Omega \langle \alpha, d \beta \rangle dv - \int_\Omega \langle \delta \alpha,  \beta \rangle dv = \int_{\pd \Omega} \bm{t} \beta \wedge * \bm{n} \alpha \hspace{2pt} .
\end{equation}
Apply it to $e^{-f} \alpha$ and $\beta$, note that
\[
\delta(e^{-f}\alpha) = -\sum_i \iota_{e_i} \n_{e_i} (e^{-f}\alpha) = e^{-f} \iota_{\n f} \alpha + e^{-f }\delta \alpha = e^{-f} \delta_f \alpha,
\]
and that $*\bm{n}(e^{-f} \alpha) = e^{-f} * \bm{n} \alpha$, hence we get the following weighted version of Green's formula
adapted to the measure $dv_f$:
\[
\int_\Omega \langle \alpha, d \beta \rangle dv_f - \int_\Omega \langle \delta_f \alpha,  \beta \rangle dv_f = \int_{\pd \Omega} \bm{t} \beta \wedge * \bm{n} \alpha \hspace{2pt} e^{-f}.
\]

We first address the adjustments required for the boundary conditions. 
In view of the weighted Green's formula, the natural boundary conditions for the weighted Hodge Laplacian $\Delta_f^d$ are as follows:
\begin{itemize}
    \item {Dirichlet (relative) boundary condition:} $\bm{t}\alpha = \bm{t}(\delta_f \alpha) = 0$;
    \item {Neumann (absolute) boundary condition:} $\bm{n} \alpha = \bm{n} (d\alpha) = 0$.
\end{itemize} 
Note that the Neumann boundary condition is exactly the same as in the unweighted case, while in the Dirichlet condition $\delta$ is replaced by $\delta_f$. For $f$-harmonic forms these reduce to $\bm{t}\alpha = 0$ and $\bm{n} \alpha = 0$, respectively.

To introduce the Hodge-Morrey decomposition in the weighted case, let 
\[
\Lambda_{D}^p(\Omega) = \{\alpha \in \Lambda^p(\Omega): \bm{t} \alpha = 0\}, \quad and \quad \Lambda_{N}^p(\Omega) = \{\alpha \in \Lambda^p(\Omega): \bm{n} \alpha = 0\}.
\]
From the Green's formula we obtain 
\[
(\ker(d_p))^\perp = \overline{\delta_f(\Lambda^{p+1}_{N}(\Omega) ) } \quad \text{and} \quad (\ker(\delta_f)_p)^\perp = \overline{d(\Lambda^{p-1}_{D}(\Omega) ) },
\] 
where the orthogonal complement and the closures are taken in the space of $L^2(dv)$-forms, or in the space of $L^2(dv_f)$-forms, which are equivalent on compact domains where $f$ is smooth and bounded. Since $\Omega$ is compact, $\Delta_f^d$ with either Dirichlet or Neumann boundary condition has discrete spectrum, hence by the same proof of Lemma \ref{lem: identification of reduced and unreduced L2 cohomology}, we can show that $d(\Lambda^{p-1}_{D}(\Omega) ) $ is closed; similarly we can show that $\delta_f(\Lambda^{p+1}_{N}(\Omega) )$ is also closed.

Take the orthogonal complement in the space of $L^2(dv_f)$-forms
\[
\mathcal{H}_f^p(\Omega): = \left( d(\Lambda^{p-1}_{D}(\Omega) ) \oplus \delta_f(\Lambda^{p+1}_{N}(\Omega) ) \right)^\perp. 
\]
This leads to the Hodge-Morrey decomposition 
\begin{equation}\label{eqn: Hodge-Morrey decomposition}
L^2_f\Lambda^p(\Omega) = \mathcal{H}_f^p (\Omega) \oplus  {d(\Lambda^{p-1}_{D}(\Omega) ) } \oplus  {\delta_f(\Lambda^{p+1}_{N}(\Omega) ) }.
\end{equation}
The space $\mathcal{H}_f^p (\Omega)$ consists of $L^2(dv_f)$-integrable $f$-harmonic forms. 

We also have the Friedrich decomposition
\begin{equation}\label{eqn: Friedrich decomposition}
\mathcal{H}_f^p(\Omega) = \mathcal{H}_{f, N}^p(\Omega) \oplus  \mathcal{H}_{f, ex}^p(\Omega),
\end{equation}
where $\mathcal{H}_{f, N}^p(\Omega)$ is the space of $f$-harmonic forms with Neumann boundary condition, which is finite dimensional since $\Omega$ is compact, and $\mathcal{H}_{f, ex}^p(\Omega)$ is the space of exact $f$-harmonic forms. \eqref{eqn: Friedrich decomposition} follows from \cite[Theorem~2.4.8]{Schwarz95} with minor modification: For any $\alpha \in \mathcal{H}_f^p(\Omega) \cap \mathcal{H}_{f, N}^p(\Omega)^\perp$, solve the Poisson equation with Neumann boundary condition
\[
\Delta_f^d \beta = \alpha \quad \text{ in }\quad \Omega; \quad \bm{n}\beta = \bm{n} d\beta = 0 \quad \text{on }\quad \partial \Omega. 
\]
Observe that the form $\alpha - d \delta_f \beta = \delta_f d \beta$ also lies in $\mathcal{H}_{f, N}^p(\Omega)^\perp$ by the Green's formula. It is $f$-harmonic since
\[
\Delta_f^d \delta_f d \beta = \delta_f d \Delta_f^d \beta = 0.
\]
And we can verify that it satisfies the Neumann boundary condition: note that the interior product $\iota_{\n f}$ does not create any normal component of a form, the  boundary condition $\bm{n} d \beta = 0$ means that $d\beta$ has vanishing normal component, hence $\iota_{\n f} d\beta$ must also have vanishing normal component, i.e. $\bm{n}(\iota_{\n f} d \beta) = 0$, then using the commutativity $\delta \bm{n} = \bm{n} \delta$, we obtain that
\[
\bm{n} \delta_f d \beta =\bm{n}(\delta d \beta + \iota_{\n f} d \beta)= \delta \bm{n} (d\beta) + \bm{n} (\iota_{\n f} d\beta) = 0.
\]
Consequently $\delta_f d \beta$ lies in $\mathcal{H}_{f, N}^p(\Omega)\cap \mathcal{H}_{f, N}^p(\Omega)^\perp$, hence is $0$. Then $\alpha = d\delta_f \beta$ is an exact $f$-harmonic form. This shows that $\mathcal{H}_f^p(\Omega) \cap \mathcal{H}_{f, N}^p(\Omega)^\perp \subset \mathcal{H}_{f, ex}^p(\Omega)$ and leads to \eqref{eqn: Friedrich decomposition}. 

Therefore, by \eqref{eqn: Hodge-Morrey decomposition} and \eqref{eqn: Friedrich decomposition} we have the Hodge isomorphism:
\begin{equation}\label{eqn: hodge thm abs}
H^p_{dR}(\Omega) \cong \mathcal{H}_{f, N}^p(\Omega),
\end{equation}
where $H^p_{dR}(\Omega)$ is the (absolute) de Rham cohomology group of $\Omega$.

\section{Agmon-type estimates and finite dimensionality for weighted harmonic forms}\label{section: growth estimates}

In this section, we prove decay estimates for differential forms $\omega$ satisfying the equation 
\[
L  \omega = ( -\Delta + V_f + \mathcal{R}_f )\omega = 0,
\]
or equivalently $e^{f/2}\omega$ is $f$-harmonic \eqref{eqn: definition of the operator L}. 
Our estimates are obtained in integral form, which can be turned into pointwise estimate by using mean value inequalities. 

The method is an adaptation of Agmon’s exponential decay estimate \cite{Agmon82}. Rather than using Agmon’s distance, we use the potential function $f$ directly. By compensating the exponential weight function with a polynomial decay term, we obtain the sharp exponential order.

By writing the operator $L$ in different forms, we prove two main lemmas under different sets of assumptions. Lemma \ref{lem: integral growth estimate} requires a lower bound of the curvature term in the Weitzenb\"ock formula; while Lemma \ref{lem: integral growth estimate - abs boundary condition} requires only assumptions on $f$.
\begin{lem}\label{lem: integral growth estimate}
Let $(M, g, dv_f= e^{-f}dv)$ be a smooth metric measure space, suppose there are  constants $a_1, \tilde{a} > 0$ and $b_1, \tilde{b} \geq 0$, such that 
\begin{enumerate}
\item $f \geq 0 $ and $f(x) \to \infty$ as $x\to \infty$;
\item $ |\n f|^2 \leq a_1 f +b_1$;
\item $V_f |\alpha|^2+ \langle \mathcal{R}_f(\alpha), \alpha\rangle  \geq (\tilde{a} f - \tilde{b})|\alpha|^2$ for every $p$-form $\alpha$,
\end{enumerate}
where $V_f$ is defined in \eqref{eqn: definition of V_f}. Let $\omega$ be a $p$-form in $L^2(dv)$ such that $L \omega = 0$, where $L$ is the Schr\"odinger type operator defined in \eqref{eqn: definition of the operator L}. Then for any $\sigma > 0$, and 
\[
q \geq \frac{ \sigma + \tilde{b} + a_1^{-1}\tilde{a} b_1 }{2\sqrt{a_1 \tilde{a}}} , 
\]
there are constants $C_1(q, \sigma, a_1, b_1, \tilde{a}, \tilde{b})$ and $C_2(q, a_1, b_1, \tilde{a}, \tilde{b}) $, such that 
\[
\sigma \int_{f> C_1}  (1+ f)^{-2q} e^{2\sqrt{\tilde{a}/a_1}f}|\omega|^2 dv \leq C_2\int_{f\leq C_1} |\omega|^2 dv.
\]
\end{lem}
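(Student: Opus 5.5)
We follow Agmon's method: test the equation $L\omega = 0$ against $\phi^2 h^2 \omega$, where $h$ is a weight built from $f$ and $\phi$ is a compactly supported cutoff. The basic identity holds for any Lipschitz $\psi$ with compact support. We integrate $\langle L\omega, \psi^2\omega\rangle dv = 0$ by parts and use the standard rearrangement
\[
\int |\nabla(\psi\omega)|^2 - \int |\nabla\psi|^2|\omega|^2 = \int \psi^2\langle \nabla^*\nabla\omega,\omega\rangle,
\]
which gives
\[
\int_M \Bigl(|\nabla(\psi\omega)|^2 + (V_f|\psi\omega|^2 + \langle \mathcal{R}_f(\psi\omega),\psi\omega\rangle)\Bigr)dv = \int_M |\nabla \psi|^2|\omega|^2 dv .
\]
Dropping the gradient term and using assumption (iii), we obtain
\[
\int (\tilde a f - \tilde b)\psi^2|\omega|^2 \le \int |\nabla\psi|^2|\omega|^2 .
\]
We take $\psi = \phi h$ with
\[
h = (1+f)^{-q} e^{\beta f}, \qquad \beta = \sqrt{\tilde a/a_1}.
\]
Then
\[
\nabla h = h\Bigl(\beta - \frac{q}{1+f}\Bigr)\nabla f,
\]
so by (ii),
\[
|\nabla h|^2 \le h^2\Bigl(\beta - \frac{q}{1+f}\Bigr)^2 (a_1 f + b_1)
\]
wherever $\beta(1+f) \ge q$.

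The key computation is to check that $\tilde a f - \tilde b - (\beta - q/(1+f))^2(a_1 f + b_1) \ge \sigma$ for $f$ large. Expanding the square,
\[
\Bigl(\beta - \frac{q}{1+f}\Bigr)^2(a_1 f + b_1) = \tilde a f + \frac{\tilde a}{a_1}b_1 - 2\beta q a_1\frac{f}{1+f} + O\Bigl(\frac{1}{1+f}\Bigr).
\]
Hence the difference equals
\[
2q\sqrt{a_1\tilde a}\,\frac{f}{1+f} - \tilde b - \frac{\tilde a b_1}{a_1} + O\Bigl(\frac{1}{1+f}\Bigr).
\]
The leading term exceeds $\sigma$ strictly only if $q$ is strictly larger than the threshold. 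At equality the lower order terms must be handled, so I would first prove the estimate for $q$ strictly larger, and absorb the boundary case by slightly enlarging $C_1$ or by noting that $f/(1+f)\to1$ together with the precise $O(1/(1+f))$ sign. If necessary I would use $2\sigma$ in place of $\sigma$ for the strict inequality. This yields $C_1$ such that the bracket is $\ge \sigma$ on $\{f > C_1\}$, with $\beta(1+f) \ge q$ there as well. The precise constant bookkeeping at the critical $q$ is the step I expect to be most delicate.

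Next we localize. On $\{f\le C_1\}$ the weight $h$ is bounded and $\tilde a f - \tilde b \ge -\tilde b$. We write
\[
|\nabla(\phi h)|^2 \le (1+\epsilon)\phi^2|\nabla h|^2 + (1+\epsilon^{-1})h^2|\nabla\phi|^2,
\]
or better, use the cross-term-free identity obtained by expanding $|\nabla(\phi h)|^2$ and integrating the cross term. In either case the terms from $\{f \le C_1\}$ go to the right-hand side with a constant $C_2$ depending on $\sup_{f\le C_1}h^2$ and on the gradient bound there. The remaining issue is the cutoff error $\int h^2|\nabla\phi|^2|\omega|^2$. Since $h$ grows exponentially and we only know $\omega \in L^2(dv)$, a plain cutoff error does not vanish. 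I would therefore first replace $h$ by a truncated weight $h_T = \min(h, h(T))$, or equivalently $\beta f$ replaced by $\beta\min(f,T)$. The truncated weight is bounded, so the cutoff error $\le C\sup h_T^2 \int_{\text{supp}\nabla\phi}|\omega|^2 \to 0$ as $\phi\to 1$ (using $f\to\infty$ so sublevel sets are compact). On $\{f > T\}$ we have $\nabla h_T = 0$, so the inequality only improves there.

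This gives, for each $T$,
\[
\sigma\int_{C_1<f} h_T^2|\omega|^2 \le C_2\int_{f\le C_1}|\omega|^2,
\]
with constants independent of $T$. Letting $T\to\infty$ by monotone convergence yields the stated estimate. The main obstacle, beyond the algebra at the borderline value of $q$, is ensuring that the truncation keeps the coefficient inequality valid uniformly in $T$, which it does because on $\{f>T\}$ the gradient term vanishes while $\tilde a f - \tilde b \ge \sigma$ still holds.
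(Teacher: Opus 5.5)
Your proposal is correct and follows the paper's argument: the same Agmon-type identity $\int_M \bigl(|\nabla(h\omega)|^2 + V_f|h\omega|^2 + \langle \mathcal{R}_f(h\omega), h\omega\rangle\bigr)\,dv = \int_M |\nabla h|^2|\omega|^2\,dv$, the same weight $h=(1+f)^{-q}e^{\sqrt{\tilde a/a_1}\,f}$ and the same asymptotic bookkeeping; the only difference is that you justify the integration by parts with a cutoff and the truncation $f\mapsto\min(f,T)$, whereas the paper first shows $\omega\in W^{1,2}(dv)$ and regularizes with $f_\theta=f/(1+\theta f)$ (take the cutoff $\equiv 1$ near $\{f\le T\}$ so the cross term vanishes, since a fixed $(1+\epsilon)$-splitting would cost $\epsilon\tilde a f$ and destroy the sharp rate). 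At the borderline value of $q$ the bracket tends to $\sigma$, typically from below (e.g.\ when $b_1=0$), so the correct fix is to use a lower bound $\sigma/2$ on $\{f>C_1\}$ and double $C_2$; enlarging $C_1$ does not help, and proving the estimate for larger $q$ only gives a weaker conclusion, a point the paper's own proof also glosses over.
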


\begin{proof}
First we show that $\omega \in W^{1,2}(dv)$. Let $\psi$ be a smooth cutoff function supported in a geodesic ball $B(p,2r)$ with $\psi \equiv 1$ on $B(p,r)$ and $|\nabla \psi| \le 2/r$. Integrating by parts and using $L\omega = 0$, we obtain
\[
\begin{split}
0 = & \int_M \psi^2 \langle\omega, -\Delta \omega + V_f \omega + \mathcal{R}_f(\omega) \rangle \\
= & \int_M \psi^2 |\n \omega|^2 + 2 \psi  \langle \n \psi \otimes \omega, \n \omega \rangle + \psi^2 V_f |\omega|^2 + \psi^2 \langle \mathcal{R}_f(\omega), \omega \rangle \\
\geq & \int_M \frac{1}{2} \psi^2 |\n \omega|^2 + (V_f \psi^2- 2|\n \psi|^2) |\omega|^2 + \psi^2  \langle \mathcal{R}_f(\omega), \omega \rangle, \\
\end{split}
\]
where we used Cauchy-Schwarz inequality to get the last inequality. By the assumptions, there exists a compact set $\Omega$ such that $V_f + \mathcal{R}_f \ge 1$ on $M\setminus\Omega$. Rearranging the above inequality gives
\[
\int_M \frac{1}{2} \psi^2 |\n \omega|^2 - 2|\n \psi|^2 |\omega|^2+ \int_{M \setminus \Omega} \psi^2 |\omega|^2  \leq - \int_\Omega V_f \psi^2 |\omega|^2 + \psi^2  \langle \mathcal{R}_f(\omega), \omega \rangle.
\]
Let $r \to \infty$, since $\omega$ is $L^2(dv)$, the term 
\[
\int_M |\n \psi|^2 |\omega|^2 dv \to 0.
\]
Hence we have 
\[
\int_{M} |\n \omega|^2 + \int_{M \setminus \Omega} |\omega|^2 \leq C \int_\Omega |\omega|^2
\] 
for some constant $C$ depending on $V_f$ and $\mathcal{R}_f$ on $\Omega$. Therefore $\omega$ is in $W^{1,2}(dv)$. 

Now fix a small number $\epsilon> 0$. For $\theta \in (0,\epsilon)$, define 
\[
f_\theta = \frac{f}{1+\theta f},
\]
which is a bounded positive function. The gradient $\n f_\theta = \frac{\n f}{(1+\theta f)^2}$ is also bounded on $M$ because $|\n f|^2 \leq a_1 f + b_1$. 
For parameters $q\geq 0$ and $\tau> 0$ to be chosen later, set
\[
\phi(s) = (1+s)^{-q} e^{\tau s}, \quad for \quad s \geq 0,
\]
and define $\omega_\theta = \phi(f_\theta) \omega$. Since $f_\theta$ is nonnegative and bounded, $|\phi(f_\theta)|$, $|\phi(f_\theta)^{-1}|$ and $|\n \phi(f_\theta)|$ are all bounded on $M$, the form $\omega_\theta$ belongs to $W^{1,2}(dv)$ as well. Consequently, integration by parts in the following calculation can be verified by a standard cutoff argument. 

Consider a family of conjugated operators $L_\theta = \phi(f_\theta) L (\phi(f_\theta))^{-1}$, then 
\[
L_\theta \omega_\theta = \phi(f_\theta) L \omega = 0.
\]
Taking inner product with $\omega_\theta$ and integrating by parts yields
\[
\begin{split}
0 = & \int_M \langle \phi(f_\theta) \omega_\theta, (- \Delta  + V_f  + \mathcal{R}_f )(\phi(f_\theta)^{-1} \omega_\theta)\rangle \\
= & \int_M \langle \n (\phi(f_\theta) \omega_\theta) , \n (\phi(f_\theta)^{-1} \omega_\theta) \rangle + V_f |\omega_\theta|^2 +\langle  \mathcal{R}_f(\omega_\theta), \omega_\theta \rangle \\
= &  \int_M ( V_f - |\n \ln \phi(f_\theta)|^2 ) |\omega_\theta|^2 +\langle  \mathcal{R}_f(\omega_\theta), \omega_\theta \rangle + |\n \omega_\theta|^2 \\
\geq & \int_M(\tilde{a} f- \tilde{b} - |\n \ln \phi(f_\theta)|^2) |\omega_\theta|^2.
\end{split}
\]
By direct calculation, 
\[
|\n \ln \phi(f_\theta)|^2  = \left( \tau -\frac{q}{1+f_\theta} \right)^2 \frac{|\n f|^2}{(1+\theta f)^4} \leq \left( \tau -\frac{q}{1+f_\theta} \right)^2 (a_1 f + b_1),
\]
where we used $|\nabla f|^2 \le a_1 f + b_1$ and $(1+\theta f)^{-4}\le 1$. Eventually we will let $\theta \to 0$.

Now fix $\tau = \sqrt{\tilde{a}/a_1}$, then we have 
\[
\begin{split}
0 \geq & \int_M \left[ \left( \tilde{a}  - a_1 \left( \sqrt{\frac{\tilde{a}}{a_1}}- \frac{q}{1+f_\theta} \right)^2\right) f - \tilde{b} - \left( \sqrt{\frac{\tilde{a}}{a_1}}  -\frac{q}{1+f_\theta} \right)^2 b_1 \right] |\omega_\theta|^2 \\
& = \int_M \left[ \frac{a_1q}{1+f_\theta}\left( 2 \sqrt{\frac{\tilde{a}}{a_1}}- \frac{q}{1+f_\theta} \right) f - \tilde{b} - \left( \sqrt{\frac{\tilde{a}}{a_1}}  -\frac{q}{1+f_\theta} \right)^2 b_1 \right] |\omega_\theta|^2.
\end{split}
\]
Observe that $\frac{f}{1+f_\theta} \geq 1$, and $f_\theta \to \infty$ as $\theta \to 0$ and $f \to \infty$, hence for fixed $q$, the term in the bracket above has limit infimum no less than $2q\sqrt{a_1\tilde{a}} - \tilde{b} -\frac{\tilde{a} b_1}{a_1}$ as $\theta \to 0$ and $f \to \infty$. 
For any $\sigma > 0$, take 
\[
q \geq \frac{ \sigma + \tilde{b} + a_1^{-1}\tilde{a} b_1 }{2\sqrt{a_1 \tilde{a}}} , 
\]
then there is a constant $C_1 = C(q, \sigma, a_1, b_1, \tilde{a}, \tilde{b})$, such that 
\[
 \left[ \frac{a_1q}{1+f_\theta}\left( 2 \sqrt{\frac{\tilde{a}}{a_1}}- \frac{q}{1+f_\theta} \right) f - \tilde{b} - \left( \sqrt{\frac{\tilde{a}}{a_1}}  -\frac{q}{1+f_\theta} \right)^2 b_1 \right]\geq \sigma
\]
when $f > C_1$, for all $\theta > 0$ sufficiently small. Consequently,
\[
\sigma \int_{f> C_1} |\omega_\theta|^2 dv \leq C(q, a_1, b_1, \tilde{a}, \tilde{b}) \int_{f\leq C_1} |\omega_\theta|^2 dv
\]
for all $\theta > 0$ sufficiently small. Then we can let $\theta \to 0$ and derive that 
\[
\sigma \int_{f> C_1} (1+ f)^{-2q} e^{2\sqrt{\tilde{a}/a_1}f}|\omega|^2 dv \leq C(q, a_1, b_1, \tilde{a}, \tilde{b}) \int_{f\leq C_1} |\omega|^2 dv,
\]
this completes the proof.
\end{proof}

\begin{remark} We make a few remarks about Lemma \ref{lem: integral growth estimate}.
\begin{enumerate}
\item The method clearly also works for eigenfunctions (or sections) of Shr\"odinger type equations $(\Delta  - V) u = -\lambda u$, provided the potential function $V$ satisfies appropriate growth conditions. The resulting exponential rate will depend on the eigenvalue $\lambda$.
\item Consider the Ornstein-Uhlenbeck operator $-\Delta +  \vec{x} \cdot \n$ on $\mathbb{R}^n$, it's corresponding isospectral Shrodinger type operator is $L = -\Delta + (\frac{1}{4} |\vec{x}|^2 - \frac{n}{2})$. The eigenfunctions of $L$ are Hermite polynomials multiplied by $e^{-|\vec{x}|^2/4}$.In this case, taking $f = \frac{1}{2}|\vec{x}|^2$, we can choose $a_1 = 2$ and $\tilde{a} = \frac{1}{2}$, and our method yields the sharp exponential order $e^{f/2}$.

Although the sharp exponential order is not needed in the applications in the following sections, it is analytically desirable and may be of independent interest.

\item If $f$ and $\operatorname{S}$ are nonnegative functions satisfying $f - |\nabla f|^2 = \operatorname{S}$ and $\frac{n}{2} - \Delta f = \operatorname{S}$, then the frequency method of \cite{CM21} gives sharp polynomial growth for any section $u$ with
\[
\langle -\Delta u + \n_{\n f} u, u\rangle \leq  \lambda |u|^2
\]
for some constant $\lambda$. See also the appendix of \cite{LZ25}. 
The above lemma also yields polynomial growth in integral form for $e^{f/2}\omega$ when $\tilde{\alpha}/a_1 = \frac{1}{4} $. 
\end{enumerate}
\end{remark}

We also need the following adaption which works for $f$-harmonic forms either in $L^2(M, dv_f)$ or on compact domain with Neumann boundary condition, and without curvature assumption. A key difference in the proof of the following lemma is that we avoid using the rough Laplacian.
\begin{lem}\label{lem: integral growth estimate - abs boundary condition}
Let $(M, g, dv_f= e^{-f}dv)$ be a smooth metric measure space, suppose there are nonnegative constants $a_1, \tilde{a} > 0$ and $b_1, \tilde{b} \geq 0$ such that 
\begin{enumerate}
\item $f \geq 0 $ and $f(x) \to \infty$ as $x\to \infty$;
\item $ |\n f|^2 \leq \frac{1}{2} ( a_1 f +b_1 )$;
\item $V_f |\alpha|^2+ \langle \mathcal{F}(\alpha), \alpha\rangle  \geq (\tilde{a} f - \tilde{b})|\alpha|^2$ for every $p$-form $\alpha$,
\end{enumerate}
where $V_f$ is defined in \eqref{eqn: definition of V_f} and $\mathcal{F}$ is given by \eqref{eqn: formula for F}. Let $\Omega = M$ or $\Omega = \{f \leq C\}$ for some regular value $C< \infty$ of $f$. Let $\omega$ be a $p$-form, such that $e^{f/2}\omega$ is $f$-harmonic on $\Omega$. If $\Omega = M$, we require $\omega \in L^2(M, dv)$; if $\Omega \subsetneq M$, we require that $e^{f/2}\omega$ satisfies the Neumann (absolute) boundary condition
\[
\quad \iota_{\nu} (e^{f/2}\omega)  = 0 \text{ along } \partial\Omega;
\]
where $\nu$ denotes the unit outer normal vector field on $\partial \Omega$. 
Then for any $\sigma > 0$, and 
\begin{equation}\label{eqn: choice of q}
q \geq \frac{ \sigma + \tilde{b} + a_1^{-1}\tilde{a} b_1 }{2\sqrt{a_1 \tilde{a}}} , 
\end{equation}
there are constants $C_1(q, \sigma, a_1, b_1, \tilde{a}, \tilde{b})$ and $C_2(q, a_1, b_1, \tilde{a}, \tilde{b}) $, such that 
\[
\sigma \int_{\Omega \cap \{f> C_1\}}  (1 + f)^{-2q} e^{2\sqrt{\tilde{a}/a_1}f}|\omega|^2 dv \leq C_2\int_{\Omega \cap \{f\leq C_1\}} |\omega|^2 dv.
\]
\end{lem}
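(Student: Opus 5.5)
The plan is to rerun the conjugation argument of Lemma \ref{lem: integral growth estimate}, but with the first-order conjugated operators in place of the rough Laplacian. Set $\eta = e^{f/2}\omega$. Then $d\eta=\delta_f\eta=0$ translates into $D\omega = D^*\omega = 0$, where
\[
D = e^{-f/2}\, d\, e^{f/2} = d + \tfrac12 df\wedge, \qquad D^* = e^{-f/2}\,\delta_f\, e^{f/2} = \delta + \tfrac12 \iota_{\n f}.
\]
Here $D^*$ is the $L^2(dv)$-adjoint of $D$. As before, set $f_\theta = f/(1+\theta f)$, $\phi(s) = (1+s)^{-q}e^{\tau s}$ and $\psi = \phi(f_\theta)\,\omega$. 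When $\Omega=M$, first check that $\omega$ has enough integrability for the integrations by parts below. This is done with a cutoff argument as in Lemma \ref{lem: integral growth estimate}, using the energy identity derived next with $\phi\equiv 1$ times a cutoff. Since $\phi(f_\theta)$ is bounded with bounded gradient, $\psi$ then has the same regularity.

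\textbf{Step 1 (two expressions for the energy).} By the Leibniz rule,
\[
D\psi = d\phi\wedge\omega, \qquad D^*\psi = -\iota_{\n\phi}\,\omega,
\]
and pointwise $|d\phi\wedge\omega|^2 + |\iota_{\n\phi}\omega|^2 = |\n\phi|^2|\omega|^2$. Hence
\[
\int_\Omega \big(|D\psi|^2+|D^*\psi|^2\big)\,dv = \int_\Omega |\n\ln\phi(f_\theta)|^2\,|\psi|^2\,dv.
\]
Next, expand $|D\psi|^2 + |D^*\psi|^2$ directly. This gives $|d\psi|^2+|\delta\psi|^2+\tfrac14|\n f|^2|\psi|^2$ plus the cross term
\[
\langle d\psi, df\wedge\psi\rangle + \langle \delta\psi, \iota_{\n f}\psi\rangle .
\]
Integrate the cross term by parts using Green's formula \eqref{eqn: Green's formula}. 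The result is $\int \langle (\delta(df\wedge\,\cdot) + d\iota_{\n f})\psi, \psi\rangle$ plus a boundary integral. The operator $\delta(df\wedge\,\cdot)+d\iota_{\n f}$ equals $-\,df\wedge\delta - \iota_{\n f} d$ up to terms that are zero order in $\psi$. Combined with the Lie derivative computation of Section \ref{section: weighted L2 cohomology}, its symmetric part is $-\frac12\Delta f + \mathcal{F}$ once $\langle\psi,\n_{\n f}\psi\rangle = \tfrac12\n f\cdot\n|\psi|^2$ is integrated by parts. Dropping $\int(|d\psi|^2+|\delta\psi|^2)\ge 0$, which is exactly where the rough Laplacian and $\mathcal R$ are avoided, one should arrive at
\[
\int_\Omega \big(V_f|\psi|^2 + \langle\mathcal F(\psi),\psi\rangle\big)\,dv \le \int_\Omega |\n\ln\phi(f_\theta)|^2|\psi|^2\,dv + (\text{boundary terms}).
\]
If the exact cancellation of the $\n_{\n f}$ terms fails, I would bound the leftover cross terms by Cauchy--Schwarz against $|d\psi|^2+|\delta\psi|^2$. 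That costs a factor in front of $|\n f|^2$, which is presumably why hypothesis (ii) carries the factor $\tfrac12$.

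\textbf{Step 2 (boundary terms).} This is the step I expect to be the main obstacle. On $\partial\Omega=\{f=C\}$ the outer normal is $\nu = \n f/|\n f|$, and the Neumann condition gives $\iota_\nu\psi=0$, i.e.\ $\bm n\psi=0$. So every boundary term containing $\bm n\psi$, or $\iota_{\n f}\psi = |\n f|\,\iota_\nu\psi$, vanishes. The remaining candidate is $\int_{\partial\Omega}\langle \n f,\nu\rangle|\psi|^2$, coming from integrating $\tfrac12\n f\cdot\n|\psi|^2$ by parts. I will track its sign carefully, expecting it either to cancel against the boundary term from $\langle d\psi, df\wedge\psi\rangle$ or to appear with a favorable sign and be discarded. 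When $\Omega=M$ there is no boundary and the $L^2(dv)$ assumption suffices.

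\textbf{Step 3 (conclude as before).} Insert hypothesis (iii) and $|\n\ln\phi(f_\theta)|^2 \le (\tau - \frac{q}{1+f_\theta})^2(a_1 f + b_1)$. This bound uses hypothesis (ii), possibly with the factor $2$ absorbed. Then choose $\tau=\sqrt{\tilde a/a_1}$. The bracket becomes exactly the one in the proof of Lemma \ref{lem: integral growth estimate}, so for $q$ as in \eqref{eqn: choice of q} it is $\ge\sigma$ on $\{f>C_1\}$, uniformly for small $\theta$. Splitting $\Omega$ into $\{f\le C_1\}$ and $\{f>C_1\}$ and letting $\theta\to0$ by monotone convergence yields the stated estimate.
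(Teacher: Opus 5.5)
Your plan works and is in substance the paper's argument. The identity you aim for in Step~1 is exactly what the paper obtains by writing $L=\Delta^d+V_f+\mathcal F$, testing $0=\int\langle\phi\,\omega_\theta,L(\phi^{-1}\omega_\theta)\rangle$, and integrating only $\Delta^d$ by parts. Your $D,D^*$ packaging re-derives $L=\Delta^d+V_f+\mathcal F$ through the cross term instead of quoting it.

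It also gains something: the exact pointwise identity $|d\phi\wedge\omega|^2+|\iota_{\nabla\phi}\omega|^2=|\nabla\phi|^2|\omega|^2$. The paper instead bounds the two pieces separately and ends up with $2|\nabla\ln\phi|^2$. That factor $2$ is what the $\tfrac12$ in (ii) compensates for, so your version does not need it. It has nothing to do with leftover cross terms. The cancellation in Step~1 is exact, so you can delete the Cauchy--Schwarz fallback.

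What is missing is Step~2, which you leave at ``expecting it either to cancel or to have a favorable sign.'' This is the one place where the Neumann condition enters. The paper's remark after the lemma shows that the corresponding term has the wrong sign under Dirichlet conditions, so the sign must be computed, not hoped for. It comes out favorably, as follows.

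First, $\langle d\psi, df\wedge\psi\rangle=\langle\iota_{\nabla f}d\psi,\psi\rangle$ pointwise. Second, Green's formula gives $\int_\Omega\langle\delta\psi,\iota_{\nabla f}\psi\rangle=\int_\Omega\langle\psi,d\iota_{\nabla f}\psi\rangle$, because the boundary term $\int_{\partial\Omega}\langle\iota_{\nabla f}\psi,\iota_\nu\psi\rangle\,d\sigma$ vanishes. So the cross term equals
\[
\int_\Omega\langle\mathcal L_{\nabla f}\psi,\psi\rangle=\int_\Omega\big(\tfrac12\langle\nabla f,\nabla|\psi|^2\rangle+\langle\mathcal F(\psi),\psi\rangle\big).
\]
Since $\langle\nabla f,\nu\rangle=|\nabla f|$ on $\{f=C\}$, the divergence theorem yields
\[
\int_\Omega |\nabla\ln\phi(f_\theta)|^2|\psi|^2=\int_\Omega\big(|d\psi|^2+|\delta\psi|^2+V_f|\psi|^2+\langle\mathcal F(\psi),\psi\rangle\big)+\frac12\int_{\partial\Omega}|\nabla f|\,|\psi|^2\,d\sigma .
\]
Nothing cancels. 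The boundary term is nonnegative, sits on the same side as $|d\psi|^2+|\delta\psi|^2$, and is discarded. This is precisely the term $\tfrac12\int_{\partial\Omega}|\nabla f||\omega_\theta|^2$ in the paper, which computes it from $\iota_\nu d\omega=-\tfrac12|\nabla f|\,\omega$; your route needs only $\iota_\nu\psi=0$.

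For $\Omega=M$, your remark ``the $L^2(dv)$ assumption suffices'' should be made precise, since $\Delta f$ alone is not controlled by the hypotheses. Apply the identity to $\chi(f)\psi$, where $\chi=1$ on $\{f\le R\}$, $\chi$ is supported in $\{f<2R\}$, and $-2/R\le\chi'\le 0$. Because $D(\chi\psi)=d(\chi\phi)\wedge\omega$ and $D^*(\chi\psi)=-\iota_{\nabla(\chi\phi)}\omega$, the extra terms are bounded, via (ii), by $C\int_{\{R<f<2R\}}|\psi|^2$ with $C$ independent of $R$. This tends to $0$ as $R\to\infty$.
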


Note that the factor $\frac{1}{2}$ in assumption $(ii)$ is non-essential, it is chosen merely for convenience so that parts of the proof of Lemma \ref{lem: integral growth estimate} can be applied verbatim.

\begin{proof}
Let $L$ be the operator defined in (\ref{eqn: definition of the operator L}), then we have $L \omega = 0$. Define $f_\theta$, $\phi$, $\omega_\theta$ and $L_\theta$ exactly as in the proof of Lemma \ref{lem: integral growth estimate}, recall that we have $L_\theta \omega_\theta = 0$. Using the weighted Weitzenb\"ock formula \eqref{eqn: weighted Weitzenbock formula}, we have $L = \Delta^d + V_f + \mathcal{F}$. Thus
\[
\begin{split}
0 =  & \int_\Omega \langle \phi(f_\theta) \omega_\theta, (\Delta^d  + V_f  + \mathcal{F} )(\phi(f_\theta)^{-1} \omega_\theta)\rangle dv \\
= &  \int_\Omega \langle \phi(f_\theta) \omega_\theta, \Delta^d (\phi(f_\theta)^{-1} \omega_\theta)\rangle dv + \int_\Omega (V_f |\omega_\theta|^2  + \langle \mathcal{F}(\omega_\theta), \omega_\theta \rangle ) dv.
\end{split}
\]
We treat the two cases $\Omega = M$ and $\Omega \subsetneq M$ separately.

\paragraph{\textbf{Case $\Omega = M$.}}
For any smooth function $\psi$ with compact support, 
\[
\begin{split}
& \int_M \psi \langle \phi(f_\theta) \omega_\theta, \Delta^d (\phi(f_\theta)^{-1} \omega_\theta)\rangle dv \\
= & \int \langle d(\psi \phi \omega_\theta) , d(\phi^{-1} \omega_\theta) \rangle +  \langle \delta (\psi \phi \omega_\theta ), \delta (\phi^{-1} \omega_\theta) \rangle \\
= &  \int \langle d\psi \wedge (\phi \omega_\theta )+ \psi d( \phi \omega_\theta) , d(\phi^{-1} \omega_\theta) \rangle +  \langle - \iota_{\n \psi} (\phi \omega_\theta) +  \psi \delta ( \phi \omega_\theta ), \delta (\phi^{-1} \omega_\theta) \rangle,\\
\end{split}
\]
where we used 
\[
\delta(\psi \phi \omega_\theta) =- \sum_i \iota_{e_i} \nabla_{e_i} (\psi \phi \omega_\theta) =-\sum_{i} \iota_{e_i} ( \n_i \psi \phi \omega_\theta + \psi \n_i(\phi \omega_\theta)) = -\iota_{\n \psi} (\phi \omega_\theta) + \psi \delta (\phi \omega_\theta)
\]
to get the last term.

For $R > 0$, choose $\psi = \psi(f)$ to be a cut off function compactly supported in the set $\{f < 2R\}$, with $\psi(f) = 1$ on $\{f \leq R\}$, such that $- \frac{2}{R} \leq \psi' \leq 0$. Note that $\phi^{-1}\omega_\theta = \omega$. By the assumption that $e^{f/2} \omega$ is $f$-harmonic, we have 
$0 = d(e^{f/2}\omega)= \frac{1}{2} e^{f/2} df \wedge \omega + e^{f/2} d\omega$ and $0 = \delta_f (e^{f/2} \omega) =  - \frac{1}{2}e^{f/2} \iota_{\n f} \omega + e^{f/2} \delta \omega + \iota_{\n f} (e^{f/2} \omega)$, hence
\[
d\omega = -\frac{1}{2} df \wedge \omega,
\]
and
\[
\delta \omega = -\frac{1}{2} \iota_{\n f} \omega.
\]
Using these relations, by calculating the terms involving $d \psi$ or $\iota_{\n \psi}$, we have
\[
\begin{split}
& \int_M \psi \langle \phi(f_\theta) \omega_\theta, \Delta^d (\phi(f_\theta)^{-1} \omega_\theta)\rangle dv \\
= &  \int_M -\frac{1}{2} \psi'  |df\wedge \omega_\theta|^2 + \psi \langle d( \phi \omega_\theta) , d(\phi^{-1} \omega_\theta) \rangle +\frac{1}{2} \psi' |\iota_{\n f} \omega_\theta|^2 +  \langle \psi \delta ( \phi \omega_\theta ), \delta (\phi^{-1} \omega_\theta) \rangle \\
\geq & \int_M \psi \langle d( \phi \omega_\theta) , d(\phi^{-1} \omega_\theta) \rangle +  \psi \langle \delta ( \phi \omega_\theta ), \delta (\phi^{-1} \omega_\theta) \rangle - \frac{2Ra_1 +b_1}{R}\int_{R<f<2 R} |\omega_\theta|^2,
\end{split}
\]
where the first term in the second line was dropped because $\psi' \leq 0$, and the third term in the second line was estimated by $|\iota_{\n f} \omega_\theta|^2 \leq |\n f|^2 |\omega_\theta|^2$. Note that $\int_{R< f< 2R} |\omega_\theta|^2 \to 0$ as $R \to \infty$ since $\omega$ is $L^2(dv)$-integrable and $\phi(f_\theta)$ is bounded. 

By direct calculation,
\[
\begin{split}
 \langle d (\phi(f_\theta) \omega_\theta), d (\phi(f_\theta)^{-1} \omega_\theta\rangle =& \langle d\phi \wedge \omega_\theta + \phi d\omega_\theta, -\phi^{-2} d\phi \wedge \omega_\theta + \phi^{-1} d\omega_\theta \rangle \\
 = & - |d \ln \phi|^2 |\omega_\theta|^2 + |d\omega_\theta|^2,
\end{split}
\]
and
\[
\begin{split}
\langle \delta (\phi(f_\theta) \omega_\theta), \delta (\phi(f_\theta)^{-1} \omega_\theta\rangle = & \langle -\iota_{\n \phi} \omega_\theta + \phi \delta \omega_\theta, \phi^{-2} \iota_{\n \phi} \omega_\theta + \phi^{-1} \delta \omega_\theta\rangle \\
= & -|\iota_{\n \ln \phi} \omega_\theta|^2 + |\delta \omega_\theta|^2 \\
\geq & - |\n \ln \phi|^2 |\omega_\theta|^2+ |\delta \omega_\theta|^2.
\end{split}
\]
Therefore, letting $R\to \infty$, we obtain
\[
\begin{split}
0 =  & \int_M \langle \phi(f_\theta) \omega_\theta, (\Delta^d  + V_f  + \mathcal{F} )(\phi(f_\theta)^{-1} \omega_\theta)\rangle dv \\
\geq & \int_M  \left( ( V_f - 2 |\n \ln \phi(f_\theta)|^2 ) |\omega_\theta|^2 + \langle \mathcal{F}(\omega_\theta), \omega_\theta \rangle + |d \omega_\theta|^2 +|\delta \omega_\theta|^2 \right)\\
\geq & \int_M  (\tilde{a} f- \tilde{b} - 2 |\n \ln \phi(f_\theta)|^2) |\omega_\theta|^2,
\end{split}
\]
where we used assumption (iii). Then the rest is the same as in the proof of Lemma \ref{lem: integral growth estimate}.

\paragraph{\textbf{Case $\Omega = \{f \leq C\}$ with Neumann (absolute) boundary condition.}}
By the Green's formula \eqref{eqn: Green's formula},
\[
\begin{split}
&\int_\Omega \langle \phi(f_\theta) \omega_\theta, \Delta^d (\phi(f_\theta)^{-1} \omega_\theta)\rangle dv \\
= & \int \langle \delta (\phi(f_\theta) \omega_\theta), \delta (\phi(f_\theta)^{-1} \omega_\theta\rangle + \langle d (\phi(f_\theta) \omega_\theta), d (\phi(f_\theta)^{-1} \omega_\theta\rangle \\
& + \int_{\partial \Omega} \bm{t} \delta(\phi(f_\theta)^{-1} \omega_\theta) \wedge * \bm{n} (\phi(f_\theta) \omega_\theta) - \bm{t} (\phi(f_\theta) \omega_\theta) \wedge * \bm{n} d (\phi(f_\theta)^{-1} \omega_\theta),
\end{split}
\]
where $\bm{t} \alpha = i^*_{\partial \Omega \to \Omega} \alpha$ is the tangential part of a differential form $\alpha$ on $\Omega$, and $\bm{n } \alpha = \alpha - \bm{t} \alpha $.

Let $\nu$ be the unit outer normal vector on $\partial \Omega$, the volume form on $\partial \Omega$ is given by $d\sigma = \iota_{\nu} dv$, one can check directly that for any $k$-form $\alpha$ and $(k+1)$-form $\beta$, we have 
\[
\bm{t} \alpha \wedge * \bm{n} \beta = \langle \alpha , \iota_{\nu}\beta\rangle d\sigma.
\]
Hence the boundary term becomes
\[
\int_{\partial \Omega} \langle \delta(\phi(f_\theta)^{-1} \omega_\theta), \phi(f_\theta) \iota_\nu \omega_\theta \rangle - \langle \phi(f_\theta) \omega_\theta, \iota_\nu d (\phi(f_\theta)^{-1} \omega_\theta) \rangle.
\]

The absolute boundary condition implies 
\[
\iota_\nu \omega_\theta = \phi(f_\theta) \iota_\nu \omega = \phi(f_\theta) e^{-f/2} \iota_\nu (e^{f/2} \omega) = 0. 
\]
Since $\Omega = \{f \leq C\}$, we can write $\nu = |\n f|^{-1} \n f$ on regular level sets of $f$, hence
\[
\iota_\nu d (\phi(f_\theta)^{-1} \omega_\theta) = \iota_\nu d \omega = - \frac{1}{2} \iota_\nu (df \wedge \omega) = -\frac{1}{2} (\iota_\nu df) \omega + \frac{1}{2} df \wedge \iota_\nu \omega= - \frac{1}{2} |\n f| \omega. 
\]
Thus the boundary term reduces to 
\[
\int_{\partial \Omega} \frac{1}{2} |\n f|  |\omega_\theta|^2.
\]
Dropping this nonnegative contribution, we obtain
\[
\int_\Omega \langle \phi(f_\theta) \omega_\theta, \Delta^d (\phi(f_\theta)^{-1} \omega_\theta)\rangle dv \\
\ge \int_\Omega \langle \delta (\phi(f_\theta) \omega_\theta), \delta (\phi(f_\theta)^{-1} \omega_\theta\rangle + \langle d (\phi(f_\theta) \omega_\theta), d (\phi(f_\theta)^{-1} \omega_\theta\rangle.
\]
Then the argument is the same as in the previous case. 
\end{proof}

\begin{remark}
If we consider the Dirichlet (relative) boundary condition 
\[
\bm{t} (e^{f/2} \omega)  = 0,
\]
instead of the Neumann (absolute) boundary condition, then we have $\bm{t} \omega_\theta = \phi \bm{t} \omega = 0$ and the boundary term becomes
\[
\int_{\partial \Omega} \bm{t} \delta(\phi(f_\theta)^{-1} \omega_\theta) \wedge * \bm{n} (\phi(f_\theta) \omega_\theta) = -\int_{\pd \Omega} \frac{1}{2} |\n f| |\iota_\nu \omega_\theta|^2 d\sigma ,
\]
which does not have the right sign. Hence the argument above breaks down for Dirichlet boundary condition.
\end{remark}

We conclude this section by proving the finite dimensionality of weighted $L^2$-harmonic forms.
\begin{thm}\label{thm-intro: finite dimension}
Let $(M, g, dv_f = e^{-f}dv)$ be a smooth metric measure space.
Suppose $f \geq 0$ and $f(x) \to \infty$ as $x\to \infty$; and suppose $|\nabla f|^2 \leq a f + b$ for some constants $a, b$.
\begin{enumerate}
\item If there exist constants $\tilde{a}> 0$ and $\tilde{b}$ such that $V_f |\omega|^2 + \langle \mathcal{R}_f(\omega), \omega\rangle \geq (\tilde{a} f - \tilde{b})|\omega|^2$ for every $p$-form $\omega$, then $\dim \mathcal{H}_f^p(M) < \infty$.
\item If there exist constants $\tilde{a}> 0$ and $\tilde{b}$ such that $V_f |\omega|^2 + \langle \mathcal{F}(\omega), \omega\rangle \geq (\tilde{a} f - \tilde{b})|\omega|^2$ for every $p$-form $\omega$, then $\dim \mathcal{H}_f^p(M) < \infty$; moreover, $\dim \mathcal{H}_{f, N}^p(\{f\leq R\})$ has a uniform upper bound for all $R$.
\end{enumerate}
\end{thm}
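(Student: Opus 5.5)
The proof combines the Agmon-type estimates of Lemmas~\ref{lem: integral growth estimate} and \ref{lem: integral growth estimate - abs boundary condition} with a standard compactness argument. The guiding principle is that a space of functions on which the $L^2$ norm is controlled by the $L^2$ norm on a fixed compact set, and which also satisfies uniform interior elliptic estimates there, must be finite dimensional.

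\textbf{Step 1: transfer to the operator $L$.} Let $\alpha\in\mathcal{H}_f^p(M)$ and set $\omega=e^{-f/2}\alpha$. Then $\omega\in L^2(M,dv)$ and $L\omega=0$. After adjusting $a,b$ to fit the normalization of assumption (ii) in each lemma, Lemma~\ref{lem: integral growth estimate} applies in case (i) and Lemma~\ref{lem: integral growth estimate - abs boundary condition} applies in case (ii). Fix any $\sigma>0$ and an admissible $q$. Since $(1+f)^{-2q}e^{2\sqrt{\tilde a/a_1}f}\ge c>0$, we obtain
\[
\int_M|\omega|^2dv\le C\int_{K}|\omega|^2dv,\qquad K=\{f\le C_1\}.
\]
By assumption (i), $K$ is compact, and the constant $C$ is independent of $\omega$.

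\textbf{Step 2: compactness.} Interior elliptic estimates for $L$ on a slightly larger compact set $K'\supset K$ give $\|\omega\|_{W^{1,2}(K)}\le C'\|\omega\|_{L^2(K')}\le C''\|\omega\|_{L^2(M)}$. Hence the unit ball of $\{e^{-f/2}\alpha:\alpha\in\mathcal{H}^p_f(M)\}$ in $L^2(dv)$ restricts to a precompact set in $L^2(K)$ by Rellich. Step 1 then shows that the restriction map to $L^2(K)$ is injective, with $\|\omega\|_{L^2(M)}\le C\|\omega\|_{L^2(K)}$. Consequently the unit ball of this space is compact, and the space is finite dimensional. Equivalently, given an orthonormal sequence, a subsequence converges in $L^2(K)$ and is therefore Cauchy in $L^2(M)$, which is a contradiction.

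\textbf{Step 3: the uniform bound in case (ii).} For the Neumann spaces on $\Omega_R=\{f\le R\}$, with $R$ a regular value, Lemma~\ref{lem: integral growth estimate - abs boundary condition} gives the same inequality with constants $C_1,C_2$ independent of $R$. For $R\le C_1$ the inequality is vacuous, but only finitely many $R$ values matter qualitatively; the real issue is uniformity, which I would obtain quantitatively.

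Choose $\sigma$ large, say $\sigma\ge 2C_2/c$, to get
\[
\int_{\Omega_R}|\omega|^2\le 2\int_{K}|\omega|^2 .
\]
I would then bound the dimension by a trace argument in the spirit of Li's method. Take an orthonormal basis $\{\omega_i\}_{i=1}^m$ in $L^2(\Omega_R,dv)$. Then
\[
m=\sum_i\int_{\Omega_R}|\omega_i|^2\le 2\int_K\sum_i|\omega_i|^2 .
\]
A pointwise estimate $\sum_i|\omega_i|^2(x)\le C\binom np$ on $K$ then gives $m\le C\binom np\,\mathrm{vol}(K)$. That pointwise estimate comes from a mean value inequality for $|\omega|^2$ with a $W^{1,2}$-type bound on a neighborhood of $K$, applied to $\omega=\sum c_i\omega_i$ with $\sum c_i^2=1$, together with the standard lemma that $\sum_i|\omega_i|^2(x)\le\binom np\sup_{\|\omega\|=1}|\omega(x)|^2$.

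\textbf{Main obstacle.} I expect the main obstacle to be the uniform local estimate near $K$ when $\partial\Omega_R$ is close to $K$, that is, when $R$ is only slightly larger than $C_1$. There the elliptic estimate must hold up to the boundary with the Neumann condition, and must be uniform in $R$. I would avoid this by enlarging $C_1$: the lemma allows this, since the inequality persists for larger thresholds. Then I would treat separately the finitely many configurations with $R\in[C_1,C_1+1]$, using up-to-boundary estimates that depend continuously on $R$. Alternatively, I would simply choose the threshold so that $K$ stays a unit distance inside $\Omega_R$, and handle small $R$ by the compactness of the family of domains.
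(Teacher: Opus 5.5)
Your Steps 1--2 are correct. They differ from the paper only in how finite dimensionality is extracted from the Agmon estimate. You use interior $W^{1,2}$ bounds, Rellich and Riesz's lemma. The paper instead runs Li's trace argument: a basis orthonormal on $\{f\le C_1\}$, the identity $\dim=\int_{\{f\le C_1\}}\sum_i|\omega_i|^2$, and a pointwise bound from a mean value inequality on $\{f<2C_1\}$.

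Your route is a bit more elementary but purely qualitative. Li's gives an explicit bound in terms of the Agmon constants and the geometry of $\{f<2C_1\}$. That is what uniformity in $R$ requires, and later the $e^{-2\mu/n}\binom np$ bound of Theorem~\ref{thm: estimate of Betti numbers for gradient Ricci shrinkers}.

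You correctly switch to Li's argument in Step~3, and for $R>2C_1$ that step is essentially the paper's proof (Step~1 of Theorem~\ref{thm: finite Betti numbers}). One small slip: you cannot choose $\sigma\ge 2C_2/c$, because $C_2$ and $c$ depend on $q$, which depends on $\sigma$. Nothing of the sort is needed, though. Any fixed $\sigma$ (the paper takes $\sigma=1$) already gives $\int_{\Omega_R}|\omega|^2\le C\int_K|\omega|^2$ with $C$ independent of $R$.

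What fails is your treatment of small $R$. It is not true that only finitely many configurations occur. Nor do the regular sublevel sets in a bounded range of $R$ form a compact family with uniform Neumann up-to-boundary estimates: as $R$ approaches a critical value the domains degenerate, and critical values may accumulate. In fact the bound is false there in general.

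The hypotheses constrain $f$ only at infinity; on compact sets they hold after enlarging $b$ and $\tilde b$. So take $f$ on $\mathbb R^n$ equal to $|x|^2/4$ outside a compact set and to $1+e^{-1/|x|^2}\sin(1/|x|)$ near the origin. Then $\{f\le 1-\eta\}$ contains a number of disjoint shells that tends to infinity as $\eta\to 0$. Hence $\dim\mathcal H^0_{f,N}(\{f\le R\})=b_0(\{f\le R\})$ is unbounded over regular values $R<1$.

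So the uniform bound can only be asserted for $R$ large, e.g.\ $R>2C_1$. There $\{f<2C_1\}$ lies in the interior of $\Omega_R$, and the mean value inequality needs no boundary estimate; this is your own option of keeping $K$ a fixed distance inside $\Omega_R$. That is exactly what the paper proves: its proof points to Theorem~\ref{thm: finite Betti numbers}, where $C>2C_1$. It is also all that the inverse-limit argument uses. You should drop the small-$R$ discussion and state the bound for $R$ sufficiently large.
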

\begin{proof}
Using the estimates of either Lemma \ref{lem: integral growth estimate} or Lemma \ref{lem: integral growth estimate - abs boundary condition}, the finite dimensionality can be derived by the method in \cite{Li97} (see the proof of Theorem \ref{thm: estimate of Betti numbers for gradient Ricci shrinkers} for details of this method). For the uniform estimate of $\dim \mathcal{H}_{f,N}^p(\{f \leq R\})$, see the proof of Theorem \ref{thm: finite Betti numbers}.
\end{proof}

\section{Weighted $L^2$ cohomology and de Rham cohomology}\label{section: l2 cohomology and de Rham cohomology}

In this section we study the relation between the weighted $L^2$ cohomology 
and the de Rham cohomology. We first establish an injective map from the 
de Rham cohomology to the reduced weighted $L^2$-cohomology. 
Then we give conditions under which this map is an isomorphism. 
Furthermore, when the spectral condition of Lemma~\ref{lem: identification of reduced and unreduced L2 cohomology} 
is also satisfied, the unreduced and reduced weighted $L^2$-cohomologies coincide, 
yielding an isomorphism between the de Rham cohomology and the unreduced weighted 
$L^2$-cohomology as well. 
Theorem~\ref{thm-intro: hodge theorem for ricci shrinkers} is proved at the end of this section.

Assume there is a proper smooth function $\rho : M \to \mathbb{R}$, which is bounded from below and satisfies $\rho(x) \to + \infty$ as the point $x$ goes to infinity. Suppose there is a constant $\rho_0$ such that $|\n \rho| > 0$ on the set $\{\rho\geq \rho_0\}$. Let $N = \{x \in M: \rho(x) < \rho_0\}$. Then
$M \setminus N$ is diffeomorphic to $\partial N \times [\rho_0, \infty)$, where each slice $\partial N \times \{\rho\}$ is a level set of the function $\rho$. 

Let $i: N \hookrightarrow M$ denote the inclusion map. The pullback $i^*$ induces an isomorphism in de Rham cohomology:
\[
[i^*]: H_{dR}^*(M) \to H_{dR}^*(N). 
\]
(Here we use $i^*$ for the pullback of forms and $[i^*]$ for the induced map on cohomology.) Note that for maps $\phi$ and $\psi$, we have $[\psi^*]\circ[\phi^*] = [(\phi\circ\psi)^*]$.

We now construct a smooth retraction $r: M \to N$. Without loss of generality, we may assume that there exists a slightly smaller set $N_\sigma = \{x \in M : \rho(x) < \rho_0 - \sigma\}$ (with $\sigma>0$ small) such that $M \setminus N_\sigma$ is diffeomorphic to $\partial N \times [\rho_0-\sigma, \infty)$. The diffeomorphism is obtained via the flow generated by the vector field $\nabla\rho/|\nabla\rho|^2$. Specifically, let $\phi_s(x)$ be the family of diffeomorphisms defined by
\[
\frac{d}{ds} \phi_s(x) = \frac{\n \rho}{|\n \rho|^2} \circ \phi_s(x), \quad \phi_{\rho_0} = id, \quad s\in [\rho_0 -\delta, +\infty),
\]
for $x\in\partial N$. Then $\rho(\phi_s(x)) = s$, and the map
\[
\partial N \times [\rho_0 - \sigma, \infty) \to M \setminus N_\sigma, \quad  (x, \rho) \mapsto \phi_{\rho - \rho_0}(x),
\]
is a diffeomorphism. Its inverse gives the desired product structure.

Identify $M\setminus N_\delta$ with $\partial N \times [\rho_0 - \sigma, +\infty)$. Let $\psi(\rho)$ be a smooth increasing function such that 
\[
\psi(\rho)  = \begin{cases}
\rho; & \rho \in [\rho_0 - \sigma, \rho_0 -\sigma/2]; \\
\rho_0; &  \rho \in [\rho_0,+\infty).
\end{cases}
\]
 Then we can define the retration by 
\[
r(p) = \begin{cases}
p; & p \in N_\sigma; \\
(x, \psi(\rho)), &  p= (x, \rho) \in N \times [\rho_0 - \sigma, +\infty).
\end{cases}
\]

Then $r\circ i$ is smoothly homotopic to $id_N$, a homotopy $H: N \times [0,1] \to N$ can be given by 
\[
H(p, \theta) = \begin{cases}
p, & p \in N_\sigma; \\
(x, \theta \rho + (1-\theta) \psi(\rho)), & p = (x, \rho) \in N \times  [\rho_0 -\sigma, \rho_0];
\end{cases}
 \quad \theta \in [0, 1].
\]
This homotopy is well-defined and smooth. Consequently,
\[
 [i^*r^*] = id \quad on \quad H^*_{dR}(N). 
\]
Similarly, $i \circ r$ is smoothly homotopic to $id_M$, and 
\[
[r^* i^*] = id \quad on \quad H^*_{dR}(M). 
\]
This shows that 
\[
[i^*] : H^*_{dR}(M) \to H^*_{dR}(N)\quad and  \quad [r^*]: H^*_{dR}(N) \to H_{dR}^*(M)
\]
are mutual inverses, and both are isomorphisms.

Let $\lambda_1(\n\n \rho)\leq \cdots \leq \lambda_n(\n\n \rho) $ be the eigenvalues of the hessian $\n\n \rho$. Let $\lambda_1'(\n\n \rho) \leq \cdots \leq \lambda_{n-1}'(\n\n \rho)$ be the eigenvalues of $\n\n \rho$ restricted to the tangent space of the cross section $\partial N \times \{\rho\}$. 

\begin{lem}\label{lem: growth of the pullback form} Suppose $\Delta \rho - 2 \n_{\nu} \n_{\nu } \rho - 2 \sum_{k = 1}^p\lambda'_{k}(\n\n \rho) \leq  \left(1 - \frac{1+c_0}{\rho} \right)|\n \rho|^2 $ for some $c_0 > 0$ when $\rho > \rho_0$, where $\nu = \frac{\n \rho}{|\n \rho|}$, then for each smooth form $\omega \in \Lambda^p(M)$, we have
\[
\int_M |r^* i^* \omega|^2 e^{-\rho} dv < \infty. 
\]
\end{lem}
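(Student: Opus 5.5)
The plan is to work in the product coordinates $M\setminus N\cong \partial N\times[\rho_0,\infty)$ given by the flow $\phi_s$ of $X=\nabla\rho/|\nabla\rho|^2$. Set $\eta=r^*i^*\omega$. On the compact set $\{\rho\le\rho_0\}$ the form $\eta$ is smooth, so its weighted $L^2$ norm there is finite. On $\{\rho\ge\rho_0\}$ the retraction is $r(x,\rho)=(x,\rho_0)$, so $\eta$ is the pullback of $\omega|_{\partial N\times\{\rho_0\}}$ under the projection along flow lines. This gives two structural facts: $\iota_X\eta=0$, so $\eta$ is tangential to the level sets, and $\mathcal{L}_X\eta=0$, so $\eta$ is invariant under $\phi_s$. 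The whole estimate then reduces to an ODE along each flow line for the quantity $|\eta|^2$ multiplied by the volume density.

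\textbf{Step 1: decay of $|\eta|^2$.} I would compute $X|\eta|^2$. Because $\mathcal{L}_X\eta=0$, the only contribution comes from $\mathcal{L}_X g^{-1}=-g^{-1}(\mathcal{L}_Xg)g^{-1}$. We have
\[
\mathcal{L}_Xg=\frac{2\nabla\nabla\rho}{|\nabla\rho|^2}+d\bigl(|\nabla\rho|^{-2}\bigr)\otimes d\rho+d\rho\otimes d\bigl(|\nabla\rho|^{-2}\bigr).
\]
The last two terms vanish when tested on tangential directions, and $\eta$ is tangential. Hence only the restriction of $\nabla\nabla\rho$ to the level set enters, and $X|\eta|^2$ equals $-\tfrac{2}{|\nabla\rho|^2}\sum_s(\nabla\nabla\rho)_{i_sj}\eta_{i_1\dots j\dots i_p}\eta_{i_1\dots i_p}/p!$. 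Diagonalizing the tangential Hessian and using the standard eigenvalue bound for the induced derivation on $p$-forms, this is at most $-\tfrac{2}{|\nabla\rho|^2}\sum_{k=1}^p\lambda'_k(\nabla\nabla\rho)\,|\eta|^2$. I expect this step to be the main obstacle. One has to fix the sign conventions carefully, and one has to confirm that the normal--tangential mixed terms of the Hessian truly drop out; it is exactly the tangentiality $\iota_X\eta=0$ that should ensure this.

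\textbf{Step 2: volume density.} In the coordinates $(x,\rho)$ we can write $dv=|\nabla\rho|^{-1}\,d\sigma_\rho\,d\rho$, where $d\sigma_\rho$ is the induced area form on the slice pulled back to $\partial N$. The first variation of area along $X$ gives $X\log d\sigma_\rho=H/|\nabla\rho|=(\Delta\rho-\nabla_\nu\nabla_\nu\rho)/|\nabla\rho|^2$. A direct computation gives $X\log|\nabla\rho|^{-1}=-\nabla_\nu\nabla_\nu\rho/|\nabla\rho|^2$. Combining these with Step 1 and with $\tfrac{d}{ds}=X$, $s=\rho$, I obtain along each flow line
\[
\frac{d}{d\rho}\log\Bigl(|\eta|^2|\nabla\rho|^{-1}d\sigma_\rho\Bigr)\le\frac{\Delta\rho-2\nabla_\nu\nabla_\nu\rho-2\sum_{k=1}^p\lambda'_k}{|\nabla\rho|^2}\le 1-\frac{1+c_0}{\rho}
\]
by the hypothesis. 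This is exactly why the hypothesis is normalized by $|\nabla\rho|^2$.

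\textbf{Step 3: integration.} Integrating from $\rho_0$ gives $|\eta|^2|\nabla\rho|^{-1}d\sigma_\rho\le C(x)\,e^{\rho}\rho^{-(1+c_0)}$. Here $C(x)$ is continuous on the compact set $\partial N$, since it is controlled by the values at $\rho=\rho_0$. Multiplying by $e^{-\rho}$ leaves $C\rho^{-1-c_0}$, which is integrable on $[\rho_0,\infty)$ because $c_0>0$. Integrating over $\partial N$ then gives $\int_{M\setminus N}|\eta|^2e^{-\rho}dv<\infty$, and together with the bounded part $N$ this proves the lemma.
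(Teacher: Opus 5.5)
Your proposal is correct and follows essentially the same route as the paper. Both arguments freeze the tangential coefficients of $r^*i^*\omega$ along the flow of $\nabla\rho/|\nabla\rho|^2$. Both differentiate $|r^*i^*\omega|^2|\nabla\rho|^{-1}$ times the slice area element in $\rho$, obtaining the exponent $(\Delta\rho-2\nabla_\nu\nabla_\nu\rho-2\sum_{k=1}^p\lambda'_k)/|\nabla\rho|^2$, and both integrate to the bound $e^{\rho-\rho_0}(\rho_0/\rho)^{1+c_0}$. The only difference is presentation: the paper computes $\frac{d}{d\rho}g_{ij}$ in coordinates $(x_1,\dots,x_{n-1},\rho)$ with $g_{in}=0$ and $\rho_i=0$, which is how the mixed terms you were worried about drop out, whereas you phrase the same computation through $\mathcal{L}_Xg$ and the first variation of area.
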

\begin{proof}
Identify $M \setminus N_\sigma$ with $\partial N \times [\rho_0-\sigma, +\infty)$, we can write 
\[
r(x, \rho) = (x, \psi(\rho)), \quad for \quad (x, \rho) \in \partial N \times [\rho_0-\sigma, +\infty).
\]
Let $x_1, ..., x_{n-1}$ be local coordinates on $\partial N$. On the cylinder $\partial N \times [\rho_0 - \sigma, +\infty)$, we have 
\[
r^* dx_i = d(x_i\circ r) = dx_i, \quad r^* d\rho = d \psi(\rho) = \psi'(\rho) d\rho ,
\]
in particular $r^* d\rho = 0$ for $\rho > \rho_0$. Note that $i^*\omega$ is just the restriction of $\omega$ on $N$.  Suppose 
$$\omega =\sum_{1\leq i_1< \cdots< i_p\leq n-1} \omega_{i_1,..., i_p}dx_{i_1} \wedge \cdots \wedge dx_{i_p} +  \sum_{1\leq i_1< \cdots< i_{p-1}\leq n-1} \omega'_{i_1, ..., i_{p-1}} d\rho \wedge dx_{i_1} \wedge \cdots \wedge dx_{i_{p-1}}, $$
 then for $\rho> \rho_0$, we have 
 \[
r^*i^* \omega (x, \rho)=\sum_{1\leq i_1< \cdots< i_p\leq n-1} \omega_{i_1, ..., i_p}(x, \rho_0) dx_{i_1} \wedge \cdots \wedge dx_{i_p},
 \]
 and
\[
|r^* i^* \omega|^2(x, \rho) = \sum_{1\leq i_1< \cdots< i_p\leq n-1}\sum_{1\leq j_1< \cdots< j_p\leq n-1} g^{i_1 j_1}(x, \rho)\cdots g^{i_p j_p}(x, \rho) \omega_{i_1, ..., i_p}(x, \rho_0) \omega_{j_1, ..., j_p}(x, \rho_0).
\]
Taking derivative of the metric tensor yields
\begin{equation}\label{eqn: growth of the metric along t}
\frac{d}{d\rho} g_{ij}(x, \rho) = (\mathcal{L}_{\frac{\n \rho}{|\n \rho|^2}} g)_{ij} = \frac{2\rho_{ij}}{|\n \rho|^2} - \frac{2\rho_i \rho_k \rho_{lj}g^{kl} + 2\rho_j \rho_k \rho_{li}g^{kl}}{|\n \rho|^4}, \quad  1\leq i, j \leq n,
\end{equation}
where we take $x_n = \rho$. By the choice of coordinates we have $g_{in} = 0$, $g^{in}= 0$ and $\rho_i = 0$ for $1\leq i \leq n-1$, and $g_{nn} = |\n \rho|^{-2}$, $\rho_n = 1$. Hence 
\begin{equation}\label{eqn: d/dt of metric}
\frac{d}{d\rho} g_{ij} = \frac{2\rho_{ij}}{|\n \rho|^2}\quad for \quad 1 \leq i, j \leq n-1, \quad and \quad \frac{d}{d\rho} g_{nn} = -\frac{2\rho_{nn}}{|\n \rho|^2}.
\end{equation}
Then by computing in normal coordinates, we can observe that 
\[
\begin{split}
\frac{d}{d\rho} |r^* i^* \omega|^2 = & - \sum \sum_{k = 1}^p g^{i_1 j_1}\cdots g^{i_k p}\left(\frac{2\rho_{pq}}{|\n \rho|^2} \right)g^{qj_k}\cdots g^{i_p j_p}(x, t) \omega_{i_1, ..., i_p}(x, \rho_0) \omega_{j_1, ..., j_p}(x, \rho_0) \\
= & -2\sum \left(\sum_{k = 1}^p \rho_{i_k i_k}\right)|\n \rho|^{-2} g^{i_1i_1} \cdots g^{i_pi_p}(x, \rho) |\omega_{i_1, ..., i_p}(x, \rho_0)|^2 \\
\leq & -2 |\n \rho|^{-2} \left( \sum_{k = 1}^p\lambda'_{k}(\n\n \rho)\right) |r^* i^* \omega|^2.
\end{split}
\]
Let $dv_N(x, \rho) = \sqrt{det(g_{ij})_{1\leq i,j \leq n-1}} dx^1 \wedge \cdots \wedge dx^{n-1}$ be the volume form on $N \times \{\rho\}$. By (\ref{eqn: d/dt of metric}), we can compute that 
\[
\frac{d}{d\rho} ( dv') = \frac{\sum_{i,j = 1}^{n-1} g^{ij} \rho_{ij}}{|\n \rho|^2 } (dv'), \quad and \quad \frac{d}{d\rho} |\n \rho|^{-1} = -\frac{g^{nn} \rho_{nn}}{|\n \rho|^2} |\n \rho|^{-1} .
\]
Hence 
\[
\begin{split}
\frac{d}{d\rho} ( |r^* i^* \omega|^2 |\n \rho|^{-1} dv' ) \leq  &  |\n \rho|^{-2} \left(-g^{nn} \rho_{nn} +  \sum_{i,j = 1}^{n-1} g^{ij} \rho_{ij} - 2 \sum_{k = 1}^p\lambda'_{k}(\n\n \rho)\right) ( |r^* i^* \omega|^2 |\n \rho|^{-1} dv' ) \\
\leq & |\n \rho|^{-2} \left(\Delta \rho - 2 \n_{\nu} \n_{\nu } \rho - 2 \sum_{k = 1}^p\lambda'_{k}(\n\n \rho) \right) ( |r^* i^* \omega|^2 |\n \rho|^{-1} dv' ).
\end{split}
\]

By the assumption $\Delta \rho - 2 \n_{\nu} \n_{\nu } \rho- 2 \sum_{k = 1}^p\lambda'_{k}(\n\n \rho) \leq  (1-\frac{1+c_0}{\rho}) |\n \rho|^2 $, we have 
\[
( |r^* i^* \omega|^2 |\n \rho|^{-1} dv' )(x, \rho) \leq e^{\rho -\rho_0}\left( \frac{\rho_0}{\rho} \right)^{1+c_0} ( |r^* i^* \omega|^2 |\n \rho|^{-1} dv' )(x, \rho_0),
\]
thus the integral
\[
\int_M |r^* i^* \omega|^2 e^{-\rho} dv = C + \int_{\rho_0}^\rho e^{-\rho} \int_{\partial N \times \{\rho\}}  |r^* i^* \omega|^2 |\n \rho|^{-1} dv' d\rho 
\]
is finite since $c_0 > 0$.
\end{proof}

\begin{remark}
Alternatively, we can assume that 
\[
- 2 \sum_{k = 1}^p\lambda'_{k}(\n\n \rho) \leq \pd_\rho v(x, \rho) |\n \rho|^2,
\]
for some function $v$. Then
\[
|r^* i^* \omega|^2(x, \rho) \leq e^{v(x, \rho) - v(x, \rho_0)} |r^* i^* \omega|(x, \rho_0).
\]
Note that $v(x, \rho_0)$ and $|r^*i^*\omega|^2(x, \rho_0)$ are bounded, thus the integral
\[
\begin{split}
\int_M |r^* i^* \omega|^2 e^{-\rho} dv = & C + \int_{\rho_0}^\rho e^{-\rho} \int_{\partial N \times \{\rho\}}  |r^* i^* \omega|^2 |\n \rho|^{-1} dv' d\rho \\
\leq & C + C \int_M e^{v(x, \rho)} e^{-\rho} dv.
\end{split}
\]
Therefore it is sufficient to assume that $\int_M e^{v} dv_\rho < \infty$.
\end{remark}

In the following theorem we construct an injective linear map from the de Rham cohomology to the weighted reduced $L^2$-cohomology.
\begin{thm}\label{lem: well-definedness and injectivity}
Let $(M, g, dv_f= e^{-f}dv)$ be a smooth metric measure space, suppose
\begin{enumerate}
\item $f \geq 0 $ and $f(x) \to \infty$ as $x\to \infty$;
\item $|\n f| > 0$ outside some compact domain;
\item $\Delta f - 2 \n_{\nu} \n_{\nu } f- 2 \sum_{k = 1}^q\lambda'_{k}(\n\n f) \leq   \left(1 - \frac{1+c_0}{f} \right)  |\n f|^2 $ for $q = p-1,p$ and $c_0 > 0$ outside some compact domain;
\end{enumerate}
then there is an injective linear map 
\[
[r^* i^*]: H_{dR}^p(M) \to \bar{H}_{(2)}^p(M, dv_f), \quad [\omega] \to [r^* i^* \omega]_{\overline{(2)}}.
\]
\end{thm}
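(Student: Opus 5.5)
We take $\rho = f$ and build $N=\{f<\rho_0\}$, the retraction $r$, and the homotopies exactly as in the construction preceding Lemma~\ref{lem: growth of the pullback form}. Assumptions (i) and (ii) make this legitimate: $f$ is proper, bounded below, and has no critical points on $\{f\ge\rho_0\}$ once $\rho_0$ is large. The proof then has three parts: show the map lands in $\ker d_p \subset L^2_f\Lambda^p(M)$, show it is well defined on cohomology classes, and show it is injective.

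\emph{Step 1 (target).} Let $\omega$ be a smooth closed $p$-form. Then $r^*i^*\omega$ is smooth and closed, since pullback commutes with $d$. By assumption (iii) with $q=p$, Lemma~\ref{lem: growth of the pullback form} applies with $\rho=f$ and gives $\int_M |r^*i^*\omega|^2 e^{-f}dv<\infty$. Hence $r^*i^*\omega\in L^2_f\Lambda^p(M)$, its distributional differential is $0$, and so it lies in $\ker d_p$.

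\emph{Step 2 (well-definedness).} Suppose $\omega=d\eta$ with $\eta$ a smooth $(p-1)$-form. Then $r^*i^*\omega=d(r^*i^*\eta)$. Applying Lemma~\ref{lem: growth of the pullback form} with $q=p-1$, again by assumption (iii), gives $r^*i^*\eta\in L^2_f$. Since $d(r^*i^*\eta)=r^*i^*\omega$ is in $L^2_f$ by Step 1, we get $r^*i^*\eta\in\Lambda^{p-1}_f(M)$. So $r^*i^*\omega\in\operatorname{im}(d_{p-1})$, and its class in $\bar H^p_{(2)}$ vanishes. Linearity is clear, so $[r^*i^*]$ is a well-defined linear map.

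\emph{Step 3 (injectivity).} Suppose $[r^*i^*\omega]_{\overline{(2)}}=0$. Lemma~\ref{de Rham Lemma} applies, since $r^*i^*\omega$ is a closed $L^2_f$ form, and yields $[r^*i^*\omega]=0$ in $H^p_{dR}(M)$. We showed earlier that $[r^*i^*]=[r^*][i^*]$ is the identity on $H^*_{dR}(M)$, because $i\circ r$ is smoothly homotopic to $\mathrm{id}_M$. Therefore $[\omega]=[r^*i^*\omega]=0$.

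\emph{Main obstacle.} The analytic content is entirely the growth control in Lemma~\ref{lem: growth of the pullback form}. The only point needing care is that it must be applied in both degrees $p$ and $p-1$; this is exactly why assumption (iii) is imposed for $q=p-1,p$. A secondary check concerns Lemma~\ref{de Rham Lemma}: its approximating sequence can be taken compactly supported, and this works because $d$ on $L^2_f$ is the closure of $d$ on $C_0^\infty$, as recalled in Section~\ref{section: weighted L2 cohomology}.
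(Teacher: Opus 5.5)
Your proposal is correct and follows the paper's proof essentially step for step: you set $\rho=f$ and use Lemma~\ref{lem: growth of the pullback form} in degrees $p$ and $p-1$ (via assumption (iii)) to get well-definedness, and you combine Lemma~\ref{de Rham Lemma} with $[r^*i^*]=\mathrm{id}$ on $H^p_{dR}(M)$ to get injectivity. Your separate Step~1 (the image lies in $\ker d_p$) and your remark about compactly supported approximants only make explicit what the paper leaves implicit.
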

\begin{proof}
Let $\rho = f$, the maps $i, r$ are defined as above.  

For any $[\omega] \in H_{dR}^p(M) $,  let's first verify that the map $ [\omega] \to [r^* i^* \omega]_{\overline{(2)}}$ is well-defined. Suppose $\omega' = \omega + d\alpha$. We have
\[
r^* i^* \omega' = r^* i^* \omega + r^* i^* d\alpha =  r^* i^* \omega + d (r^* i^* \alpha) .
\]
By Lemma \ref{lem: growth of the pullback form} and the assumption $(iii)$, the forms $r^*  i^* \omega$, $r^* i^* \alpha$ and $d (r^* i^* \alpha) = r^* i^* d \alpha$ are  $L^2(dv_f)$-integrable.
In particular, $d (r^* i^* \alpha)$ belongs to $\operatorname{im}(d_{p -1}|_{\Lambda_f^{p-1}(M)} )$. Hence the map $[\omega] \to [r^* i^* \omega]_{\overline{(2)}}$ is well-defined.

If for some $[\omega] \in H^p_{dR}(M)$, we have $[r^* i^* \omega]_{\overline{(2)}} = 0$, then Lemma \ref{lem: growth of the pullback form} and Lemma \ref{de Rham Lemma} shows that $[r^*i^*\omega]=0$, hence $[\omega]=0$ because $[r^*i^*] = id$ on the de Rham cohomology, this verifies injectivity. 
\end{proof}
Next, we give conditions under which this injective linear map is an isomorphism.
\begin{thm}\label{thm: identification of reduced L2 cohomology and de Rham cohomology}
Let $(M, g, dv_f= e^{-f}dv)$ be a smooth metric measure space, suppose there are constants $\epsilon_0, a_1, a_2> 0$ and $ b_1, b_2 \geq 0$, such that 
\begin{enumerate}
\item $f \geq 0 $ and $f(x) \to \infty$ as $x\to \infty$;
\item $ |\n f|^2 \leq a_1 f +b_1$ on $M$, and $|\n f|> \epsilon_0 $ outside some compact domain;
\item $\Delta f - 2 \n_{\nu} \n_{\nu } f- 2 \sum_{k = 1}^q\lambda'_{k}(\n\n f) \leq  \left(1 - \frac{1+c_0}{f} \right) |\n f|^2 $ for $q = p-1,p$ and $c_0 > 0$ outside some compact domain;
\item $V_f |\omega|^2 + \langle \mathcal{F}(\omega), \omega\rangle  \geq  (a_2 f - b_2)|\omega|^2$ for every $p$-form $\omega$.
\end{enumerate}
Then the map 
\[
[r^* i^*]: H_{dR}^p(M) \to \bar{H}_{(2)}^p(M, dv_f), \quad [\omega] \to [r^* i^* \omega]_{\overline{(2)}}
\]
is an isomorphism.
\end{thm}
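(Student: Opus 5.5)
By the preceding theorem (conditions (i)--(iii) are exactly its hypotheses), the map $[r^*i^*]$ is well defined and injective. It therefore suffices to prove surjectivity. By \eqref{eqn: reduced L2 chomology and harmonic forms}, every class in $\bar H^p_{(2)}(M,dv_f)$ has an $f$-harmonic representative $h\in\mathcal H^p_f(M)$. Since $h$ is smooth and closed, it defines a de Rham class $[h]\in H^p_{dR}(M)$. The goal is to show
\[
[r^*i^*h]_{\overline{(2)}}=[h]_{\overline{(2)}},
\]
that is, that $h-r^*i^*h$ lies in $\overline{\operatorname{im}(d_{p-1})}$.

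To do this I will build an explicit primitive using a homotopy operator. Take $N=N_R=\{f<R\}$ for a large regular value $R$; the retraction $r_R$ and the homotopy $H$ between $i\circ r_R$ and $\mathrm{id}_M$ are built from the flow of $\nabla f/|\nabla f|^2$. The standard chain homotopy formula
\[
h-r_R^*i^*h=d(Kh)+K(dh),\qquad Kh=\int_0^1 \iota_{\partial_\theta}H^*h\,d\theta,
\]
together with $dh=0$, gives $h-r_R^*i^*h=d(Kh)$. The form $Kh$ is supported in $\{f\ge R-\sigma\}$. In the product coordinates it is essentially $\int \iota_{\nabla f/|\nabla f|^2}h$ integrated along flow lines from level $\psi(f)$ to level $f$. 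Two facts then finish the argument. First, $r_R^*i^*h$ and $r_{R_0}^*i^*h$ differ by an exact form $d\beta$ with $\beta\in\Lambda_f^{p-1}$; this comes from Lemma~\ref{lem: growth of the pullback form} applied with $q=p-1$ to the homotopy between the two retractions, as in the well-definedness part of the preceding theorem. Second, it suffices to show $Kh\in L^2(dv_f)$ with $\|Kh\|_{L^2(dv_f)}\to0$ as $R\to\infty$. Granted this, $h-r_R^*i^*h\in\operatorname{im}(d_{p-1})$, and hence $[h]_{\overline{(2)}}=[r^*i^*h]_{\overline{(2)}}$, which proves surjectivity.

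The main obstacle is the weighted $L^2$ bound on $Kh$. Pulling $h$ back along the flow from level $f$ down to level $s\in[R,f]$ uses the same metric evolution \eqref{eqn: growth of the metric along t} as in Lemma~\ref{lem: growth of the pullback form}, now with $q=p-1$ because of the contraction $\iota_{\nabla f}$. Assumption (iii) then gives the pointwise bound
\[
|H_\theta^*h|^2(x,f)\lesssim e^{\,f-s}\,(s/f)^{1+c_0}\,|h|^2(x,s)
\]
after integration against the slice volume and $|\nabla f|^{-1}$. The factor $|\nabla f|^{-2}$ coming from the flow speed is bounded because $|\nabla f|>\epsilon_0$. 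Integrating against $e^{-f}$, the factor $e^{f-s}$ cancels the weight difference, and we are left with
\[
\int_{f>R}|Kh|^2dv_f\lesssim \int_{f>R}|h|^2 e^{-s}\cdot(\text{polynomial in } f),
\]
where the loss comes from the length of the flow lines, which is $\lesssim f$ by Cauchy--Schwarz. I plan to absorb this polynomial loss using the Agmon-type estimate, Lemma~\ref{lem: integral growth estimate - abs boundary condition}, on $\Omega=M$, applied to $\omega=e^{-f/2}h$, which is in $L^2(dv)$. Assumption (iv) and $|\nabla f|^2\le a_1f+b_1$ give exactly its hypotheses, so
\[
\int (1+f)^{-2q}e^{2\sqrt{a_2/(2a_1)}\,f}\,|h|^2e^{-f}\,dv<\infty .
\]
Thus $h$ decays in $L^2(dv_f)$ faster than any polynomial (in fact exponentially). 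This makes the tail integral finite and tending to $0$ as $R\to\infty$. Converting the slice-wise integrals into integrals over $M$ via the coarea formula with $|\nabla f|\ge\epsilon_0$ should be routine.
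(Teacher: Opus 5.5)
Your architecture matches the paper's. Injectivity comes from the preceding theorem, you take an $f$-harmonic representative $h$, use the chain homotopy $h-r^*i^*h=d(Kh)$, and apply the Agmon-type decay of Lemma~\ref{lem: integral growth estimate - abs boundary condition} to put $Kh$ in $L^2(dv_f)$. On one point you are more accurate than the paper. Its proof writes $K\eta=(\rho-\psi(\rho))\eta_2(\rho)$ and then only uses $\int|\eta_2|^2(1+f)^2dv_f<\infty$, but that drops the $\theta$-dependence of $\eta_2\circ H(\cdot,\theta)$. Correctly, $K\eta(x,\rho)=\int_{\psi(\rho)}^{\rho}\eta_2(x,s)\,ds$, with coefficients frozen at level $s$ and the norm taken in $g(x,\rho)$. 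So the transport estimate of Lemma~\ref{lem: growth of the pullback form} with $q=p-1$ is genuinely needed, as you say.

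The gap is in how you integrate. Write $T_{s\to\rho}\eta_2$ for $\eta_2(\cdot,s)$ measured at level $\rho$, and set $F(s)=\int_{\{f=s\}}|\eta_2|^2e^{-s}|\nabla f|^{-1}d\sigma$. Plain Cauchy--Schwarz costs the flow-line length $\rho-R$, which is a factor of the \emph{evaluation} level. After Fubini, your density bound then gives only
\[
\int_{f>R}|Kh|^2\,dv_f\le\int_R^\infty F(s)\,s^{1+c_0}\int_s^\infty(\rho-R)\,\rho^{-1-c_0}\,d\rho\,ds .
\]
The inner integral is $+\infty$ for every $s$ when $c_0\le 1$, and the hypothesis allows any $c_0>0$. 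The Agmon estimate cannot rescue this: it gives decay of $F$ in $s$, while the divergence is in $\rho\in[s,\infty)$. Your expression ``$\int|h|^2e^{-s}\cdot(\text{polynomial in }f)$'' conflates these two variables. The only decay available in $\rho$ is the factor $(s/\rho)^{1+c_0}$ from (iii). The integral $\int_s^\infty(s/\rho)^{1+c_0}d\rho=s/c_0$ is finite precisely because $c_0>0$, but it stops being finite once you multiply by $\rho$.

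The repair is to put the Cauchy--Schwarz weight on $s$. For $0<\epsilon<\sqrt{2a_2/a_1}$,
\[
|Kh|^2(x,\rho)\le\frac{e^{-\epsilon R}}{\epsilon}\int_R^\rho e^{\epsilon s}\,|T_{s\to\rho}\eta_2|^2\,ds .
\]
Fubini and your density bound then give
\[
\int_{f>R}|Kh|^2dv_f\le(\epsilon c_0\epsilon_0^2)^{-1}e^{-\epsilon R}\int_{f>R}f\,e^{\epsilon f}|h|^2\,dv_f .
\]
The right side is finite by the Agmon bound $\int_M|h|^2(1+f)^l e^{\epsilon f}dv_f<\infty$. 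A polynomial weight $s^{-1-\delta}$ works equally well.

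Finally, varying $R$ and asking $\|Kh\|\to0$ is superfluous. For the fixed retraction, $Kh\in L^2(dv_f)$ together with $d(Kh)=h-r^*i^*h\in L^2(dv_f)$ already puts $h-r^*i^*h$ in $\operatorname{im}(d_{p-1})$.
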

\begin{proof}
By Theorem \ref{lem: well-definedness and injectivity}, the map is well-defined and injective. We only need to prove surjectivity. 

 For any $[\eta]_{\overline{(2)}} \in \bar{H}^p_{(2)}(M,dv_f)$, by (\ref{eqn: Hodge decomposition}), we can choose $\eta \in \mathcal{H}_f^p$. By homotopy equivalence we have $id - r^* i^* = K d + d K$, where the homotopy operator $K$ will be written explicitly below. Since $\eta$ is closed, we have $\eta - r^* i^* \eta = d K \eta$. By Lemma \ref{lem: growth of the pullback form}, the form $r^*  i^* \eta$ is $L^2(dv_f)$-integrable, hence $dK\eta$ is also $L^2(dv_f)$-integrable. We only need to show that $K\eta$ is $L^2(dv_f)$-integrable, then we have $[\eta]_{\overline{(2)}}  = [r^* i^* \eta]_{\overline{(2)}} $, which will imply surjectivity and finish the proof. 

Now we write down $K$ explicitly. Let $\rho = f$ and define $\psi(\rho)$ as in the discussion above. Let $H: M\times [0, 1] \to M$ be the smooth homotopy of $i\circ r$ with $id_M$ given by
\[
 H(p; \theta) = \begin{cases}
 p, & N_\sigma; \\
 (x, \theta \rho + (1-\theta) \psi(\rho)), & p = (x, \rho) \in \partial N \times [\rho_0 -\sigma, +\infty);
\end{cases}
\quad \theta \in [0, 1].
\]
 For any $\eta \in \Lambda^p(M)$, we can write 
\[
\eta = \eta_0 + \eta_1(\rho) + d\rho \wedge \eta_2(\rho), \quad \eta_1 \in \Lambda^{p}(\partial N), \quad \eta_2 \in \Lambda^{p-1} (\partial N),
\] 
where $\eta_1$ and $\eta_2$ are uniquely determined on $M \setminus N_\sigma$, then $\eta_0$ is simply taken to be $\eta - \eta_1 - d \rho \wedge \eta_2$, which vanishes on $M \setminus N_\sigma$.
For our purpose, it suffices to compute on $\partial N \times [\rho_0 -\sigma, +\infty)$ that
\[
H(\cdot, \theta)^* \eta = \eta_1\circ H(\cdot, \theta) + (\theta + (1-\theta)\psi'(\rho)) d\rho \wedge \eta_2\circ H(\cdot, \theta)  + (\rho - \psi(\rho)) d\theta \wedge \eta_2\circ H(\cdot, \theta) .
\]
By Cartan's magic formula, 
\[
\begin{split}
H(\cdot, 1)^* \eta - H(\cdot, 0)^* \eta = & \int_0^1 \left( \iota_{\frac{\pd}{\pd \theta}} d H(\cdot, \theta)^* \eta + d \iota_{\frac{\pd}{\pd \theta}}  H(\cdot, \theta)^* \eta \right) d\theta \\
= & \int_0^1  \iota_{\frac{\pd}{\pd \theta}}  H(\cdot, \theta)^* d\eta  d\theta + d \int_0^1   \iota_{\frac{\pd}{\pd \theta}}  H(\cdot, \theta)^* \eta  d\theta, \\
\end{split}
\]
hence by taking $K\eta = \int_{0}^1 \iota_{\frac{\pd}{\pd \theta}} H(\cdot, \theta)^* \eta d\theta$, we have $H(\cdot, 1)^* \eta - H(\cdot, 0)^* \eta = K d \eta + dK \eta  $. By the explicit formula for $H(\cdot, \theta)^* \eta$ above, the operator $K$ can be explicitly written as 
\begin{equation}\label{eqn: expression of K}
K\eta =\int_{0}^1 \iota_{\frac{\pd}{\pd \theta}} H(\cdot, \theta)^*  \eta d\theta = \int_0^1 (\rho - \psi(\rho)) \eta_2(\rho) d\theta =(\rho - \psi(\rho)) \eta_2(\rho) .
\end{equation}

By applying Lemma \ref{lem: integral growth estimate - abs boundary condition} to $\omega = e^{-f/2}\eta$, and absorbing the polynomial $(1+f)^l$ by the exponential weight, we see that for any $0\leq \epsilon < \sqrt{2 a_2/ a_1}$, and any $l \geq 0$,
\begin{equation}\label{eqn: weighted integrability of eta}
\int_M |\eta|^2 (1+f)^l e^{\epsilon f} dv_f < \infty .
\end{equation}

Since $\rho = f$, we have $\frac{\pd }{\pd \rho} = \frac{\n f}{|\n f|^2}$. For $\frac{\partial}{\partial x_i}$ tangent to the level sets of $\rho$ we have $g(\frac{\partial}{\partial x_i}, \frac{\partial}{\partial \rho}) = |\n f|^{-2} \frac{\partial f}{\partial x_i} = 0$, hence $\eta_1$ and $d\rho \wedge \eta_2$ are orthogonal. Consequently, both $\eta_1(\rho)$ and $d\rho \wedge \eta_2(\rho)$ satisfying the integrability in (\ref{eqn: weighted integrability of eta}). Then, take $l =2$ and $\epsilon = 0$, since $|d\rho \wedge \eta_2|^2 = |\n f|^2 |\eta_2|^2$, by assumption $(ii)$ we have
\[
\epsilon_0^2 \int_{M \setminus \Omega} |\eta_2|^2 (1+\rho)^2 dv_f\leq \int_{M \setminus \Omega} |d\rho \wedge \eta_2|^2 (1+\rho)^2 dv_f < \infty,
\]
for some compact set $\Omega$.
Together with the explicit expression \eqref{eqn: expression of K}, and note that $\psi(\rho)$ is bounded, this implies that $K\eta$ must be $L^2(dv_f)$-integrable. This finishes the proof.  
\end{proof}

Finally, under conditions that guarantee spectral discreteness for the weighted Hodge Laplacian, the reduced and unreduced weighted $L^2$-cohomologies coincide, which yields an isomorphism with the ordinary de Rham cohomology as well.
\begin{thm}\label{thm: identification of L2 cohomology and de Rham cohomology}
Let $(M, g, dv_f= e^{-f}dv)$ be a smooth metric measure space, suppose there are constants $\epsilon_0, a_1, a_2> 0$ and $ b_1, b_2 \geq 0$, such that 
\begin{enumerate}
\item $f \geq 0 $ and $f(x) \to \infty$ as $x\to \infty$;
\item $ |\n f|^2 \leq a_1 f +b_1$ on $M$, and $|\n f|> \epsilon_0$ outside some compact domain;
\item $\Delta f - 2 \n_{\nu} \n_{\nu } f- 2 \sum_{k = 1}^q\lambda'_{k}(\n\n f) \leq  \left(1 - \frac{1+c_0}{f} \right) |\n f|^2 $ for $q = p-1,p$ and $c_0 > 0$ outside some compact domain;
\item $V_f |\omega|^2 + \langle \mathcal{R}_f(\omega), \omega\rangle  \geq  (a_2 f - b_2)|\omega|^2$ for every $p$-form $\omega$.
\end{enumerate}
Then the map 
\[
[r^* i^*]: H_{dR}^p(M) \to H_{(2)}^p(M, dv_f), \quad [\omega] \to [r^* i^* \omega]_{(2)}
\]
is an isomorphism.
\end{thm}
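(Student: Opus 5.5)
The plan is to combine three ingredients already available in the paper: the injectivity from Theorem~\ref{lem: well-definedness and injectivity}, the spectral discreteness from Lemma~\ref{lem: discrete spectrum} together with the closed-range statement of Lemma~\ref{lem: identification of reduced and unreduced L2 cohomology}, and a version of the surjectivity argument of Theorem~\ref{thm: identification of reduced L2 cohomology and de Rham cohomology}. In that last argument the decay estimate of Lemma~\ref{lem: integral growth estimate - abs boundary condition} will be replaced by that of Lemma~\ref{lem: integral growth estimate}, since assumption $(iv)$ now controls $\mathcal{R}_f$ rather than $\mathcal{F}$.

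\textbf{Step 1: spectral discreteness and closed range.} Assumption $(iv)$ gives
\[
V_f|\alpha|^2+\langle\mathcal{R}_f(\alpha),\alpha\rangle\ge \lambda(x)|\alpha|^2, \qquad \lambda = a_2 f - b_2 .
\]
By $(i)$, $\lambda$ is continuous and tends to $+\infty$ at infinity. Lemma~\ref{lem: discrete spectrum} then shows that $L$, and hence $(\Delta^d_f)_p$, has purely discrete spectrum. Lemma~\ref{lem: identification of reduced and unreduced L2 cohomology} gives that $\operatorname{im}(d_{p-1})$ is closed, so $H^p_{(2)}(M,dv_f)\cong \bar H^p_{(2)}(M,dv_f)$ via the natural projection $[\cdot]_{(2)}\mapsto[\cdot]_{\overline{(2)}}$.

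\textbf{Step 2: injectivity and well-definedness.} These follow directly from Theorem~\ref{lem: well-definedness and injectivity}. Assumptions $(i)$–$(iii)$ contain its hypotheses, and its proof shows that $d(r^*i^*\alpha)$ lies in $\operatorname{im}(d_{p-1})$ itself, not merely in its closure, so the map into the unreduced group is well defined. Injectivity into $H^p_{(2)}$ follows because composing with the projection to $\bar H^p_{(2)}$ is already injective.

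\textbf{Step 3: surjectivity.} Take $[\eta]_{(2)}$. By Step 1 and the Hodge decomposition \eqref{eqn: Hodge decomposition}, we may choose $\eta\in\mathcal{H}^p_f(M)$ in this class. Then $\omega=e^{-f/2}\eta\in L^2(dv)$ satisfies $L\omega=0$. Assumptions $(i)$, $(ii)$, $(iv)$ are exactly the hypotheses of Lemma~\ref{lem: integral growth estimate}. Absorbing polynomial factors into the exponential as in \eqref{eqn: weighted integrability of eta}, this gives
\[
\int_M|\eta|^2(1+f)^2\,dv_f<\infty .
\]
Next write $\eta=\eta_0+\eta_1+d\rho\wedge\eta_2$ with $\rho=f$. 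The two pieces $\eta_1$ and $d\rho\wedge\eta_2$ are orthogonal, and $|\nabla f|>\epsilon_0$ at infinity. Together with the explicit formula \eqref{eqn: expression of K}, $K\eta=(\rho-\psi(\rho))\eta_2$, this shows $K\eta\in L^2(dv_f)$. Since $\eta$ is closed, $\eta - r^*i^*\eta = dK\eta$. The form $r^*i^*\eta$ is in $L^2(dv_f)$ by Lemma~\ref{lem: growth of the pullback form}, so $dK\eta\in L^2(dv_f)$ as well, and hence $K\eta\in\Lambda^{p-1}_f(M)$. Therefore $[\eta]_{(2)}=[r^*i^*\eta]_{(2)}$ exactly, with no closure needed.

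\textbf{Expected obstacle.} The only point needing care is this last identity. One must check that the distributional exterior derivative of the smooth form $K\eta$ agrees with the pointwise $dK\eta$, which holds because smooth forms with both the form and its derivative in $L^2_f$ lie in the domain of $d$. One must also check that the smoothness of $\eta$ (from elliptic regularity, since $\Delta^d_f\eta=0$) justifies the homotopy formula. Alternatively, surjectivity can be obtained cheaply by composing Theorem~\ref{thm: identification of reduced L2 cohomology and de Rham cohomology}-type surjectivity onto $\bar H^p_{(2)}$ with the isomorphism of Step 1. That route only requires rerunning the proof of Theorem~\ref{thm: identification of reduced L2 cohomology and de Rham cohomology} with Lemma~\ref{lem: integral growth estimate} in place of Lemma~\ref{lem: integral growth estimate - abs boundary condition}, which is the step I regard as the substantive check.
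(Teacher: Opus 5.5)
Your proposal is correct and follows the paper's own proof. Assumption $(iv)$, together with Lemma~\ref{lem: discrete spectrum} and Lemma~\ref{lem: identification of reduced and unreduced L2 cohomology}, gives $H^p_{(2)}(M,dv_f)=\bar{H}^p_{(2)}(M,dv_f)$, so every class has an $f$-harmonic representative; then you rerun the surjectivity argument of Theorem~\ref{thm: identification of reduced L2 cohomology and de Rham cohomology} with Lemma~\ref{lem: integral growth estimate} in place of Lemma~\ref{lem: integral growth estimate - abs boundary condition}, and your added details are exactly what the paper's brief proof leaves implicit: well-definedness in the unreduced group, and $K\eta\in\Lambda^{p-1}_f(M)$ yielding $[\eta]_{(2)}=[r^*i^*\eta]_{(2)}$ directly.
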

\begin{proof}
We only need to modify the proof of Theorem \ref{thm: identification of reduced L2 cohomology and de Rham cohomology} slightly. 

By the assumptions, we can apply Lemma \ref{lem: identification of reduced and unreduced L2 cohomology} to get $H^p_{(2)}(M, dv_f)  = \bar{H}^p_{(2)}(M, dv_f)$, hence we can choose an $f$-harmonic representation for each cohomology class. The growth estimate need in the proof of Theorem \ref{thm: identification of reduced L2 cohomology and de Rham cohomology} is provided by Lemma \ref{lem: integral growth estimate} in this case.
\end{proof}

As an application of the results developed in this section, we now prove Theorem~\ref{thm-intro: hodge theorem for ricci shrinkers} stated in the introduction.
\begin{proof}[Proof of Theorem \ref{thm-intro: hodge theorem for ricci shrinkers}]
Let's verify the assumptions of Theorem \ref{thm: identification of reduced L2 cohomology and de Rham cohomology}.

By \cite{CZ2010}, there are constants $c_1, c_2$ depending only on $n$ such that
\[ \frac{1}{4}(r(x)-c_1)_+^2 \leq f(x) \leq \frac{1}{4}(r(x)+c_2)^2,
\]
where $r(x)$ is the distance to a fixed minimal point of $f$. We have $|\n f|^2 + \operatorname{S} = f$ by normalization, and the scalar curvature $\operatorname{S}$ is nonnegative \cite{Chen2009}, hence $f\geq 0$. 

Suppose $|\operatorname{Ric}|\leq \epsilon f$ outside some compact domain,  we have $\operatorname{S}\leq n\epsilon f$, and $|\n f|^2 \geq (1-n \epsilon f) > \frac{1}{2}f$ when $\epsilon < \frac{1}{2n}$. Hence Condition $(i)$ and $(ii)$ in Theorem \ref{thm: identification of reduced L2 cohomology and de Rham cohomology} are verified with $a_1 = 1$, $b_1=0$ and $\epsilon_0$ can be any positive number. 

By \eqref{eqn: Ricci shrinker equation}, condition $(iii)$ translates into 
\[
\frac{n}{2} - q + 2 (\operatorname{Ric}(\nu, \nu) + \sum_{k =1}^q \lambda'_k(\operatorname{Ric})) - \operatorname{S} \leq (1- \frac{1+c_0}{f}) (f - \operatorname{S}),
\]
for $q = p-1,p$, outside some compact domain. Add $\operatorname{S}$ to both sides, then the left hand side is bounded from above by $\frac{n}{2}+ 2n\epsilon f$, hence $(iii)$ is also verified when $\epsilon < \frac{1}{2n}$ with strict inequality.

By tracing \eqref{eqn: Ricci shrinker equation}, we have $\Delta f + \operatorname{S} = \frac{n}{2}$, hence $V_f = \frac{1}{4}(f+\operatorname{S} - n)$, and condition $(iv)$ in Theorem \ref{thm: identification of reduced L2 cohomology and de Rham cohomology} translates into 
\[
\left( \frac{1}{4} (f + \operatorname{S} -n) + \frac{q}{2}\right) |\omega|^2 - \langle \operatorname{Ric}(\omega), \omega\rangle \geq (a_2 f - b_2 ) |\omega|^2. 
\]
On compact domains we can always find $b_2$ depending on the local geometry such that the above hold.  Since $\langle \operatorname{Ric}(\omega), \omega\rangle \leq p\epsilon f|\omega|^2$ outside some compact domain, we observe that $(iv)$ holds when $\epsilon < \frac{1}{4n}$, and $a_2$ can be taken to be $\frac{1}{4} - n\epsilon$. Hence we can take $\epsilon = \frac{1}{5n}$, and the claim then follows from Theorem  \ref{thm: identification of reduced L2 cohomology and de Rham cohomology} and \eqref{eqn: reduced L2 chomology and harmonic forms}.
\end{proof}

\begin{remark}\label{rem: remark about AS00}
It is instructive to compare the results of this section with those of Ahmed--Stroock \cite{AS00}. 
There the weight function is taken to be $e^{-U}$, where $U$ satisfies $|\nabla U|^2 \ge c U^{1+\epsilon}$ outside a compact set (for some $c,\epsilon > 0$). 
In the present work, by contrast, we work with the weight $e^{-f}$ under the assumption $|\nabla f|^2 \le a_1 f + b_1$. 
These two regimes are therefore mutually exclusive. 
It is noteworthy that a Hodge-type isomorphism can nonetheless be established in both settings (under additional assumptions).
\end{remark}

To conclude this section, we remark that the identification of weighted $L^2$-cohomology with de Rham cohomology opens the door to generalizing vanishing theorems from closed manifolds to the present noncompact setting; see, for example, \cite{Lott03}. In recent years, there have been significant advances in understanding which curvature conditions imply vanishing for classical harmonic forms on closed manifolds \cite{HT25,PW21a,PW21b}, as well as for weighted harmonic forms \cite{PW20}. These conditions may also lead to interesting topological consequences in the noncompact case. We will not, however, pursue this direction here in order to keep the focus on our main results.

\section{Applications to Shrinkers}\label{section: applications}

In this section we apply the results developed in the previous sections to 
gradient Ricci shrinkers and to self-shrinkers of the mean curvature flow. 
We first establish general estimates for Betti numbers of smooth metric measure 
spaces, from which Theorem~\ref{thm-intro: Betti numbers of gradient Ricci shrinkers} 
follows. We then prove vanishing results for Betti numbers under curvature operator 
assumptions, establishing Theorem~\ref{thm-intro: vanishing of betti numbers}. 
The $(n-1)$-th Betti number and the number of ends are analyzed, and 
Theorem~\ref{thm-intro: number of ends} is proved. Finally, we demonstrate 
the flexibility of our method by proving a parallel result for self-shrinkers.

\subsection{Estimates of Betti numbers}
Before restricting to gradient Ricci shrinkers, we first prove an estimate for Betti numbers in the generality of smooth metric measure spaces. 
\begin{thm}\label{thm: finite Betti numbers}
Let $(M^n, g, dv_f= e^{-f}dv)$ be a smooth metric measure space, suppose there are constants $a_1, \tilde{a} > 0$ and $b_1, \tilde{b} \geq 0$, such that 
\begin{enumerate}
\item $f \geq 0 $ and $f(x) \to \infty$ as $x\to \infty$;
\item $ |\n f|^2 \leq \frac{1}{2} ( a_1 f +b_1 )$;
\item $V_f |\omega|^2+ \langle \mathcal{F}(\omega), \omega\rangle  \geq (\tilde{a} f - \tilde{b})|\omega|^2$ for every $p$-form $\omega$,
\end{enumerate}
where $V_f$ is defined in (\ref{eqn: definition of V_f}). 
Then there is a constant $C_p$ depending on $a_1, b_1, \tilde{a}, \tilde{b}, n, p$ and the geometry of the set $\{f < 2C_1\}$, where $C_1$ is the constant in Lemma \ref{lem: integral growth estimate - abs boundary condition}, such that 
$$b_p(M) \leq C_p.$$ 
Moreover, the restriction map $H^p_{dR}(M) \to H^p_{dR}(\{f\leq R\})$ is isomorphic when $R$ is sufficiently large. 
\end{thm}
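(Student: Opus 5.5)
The plan is to bound $\dim \mathcal{H}_{f,N}^p(\Omega_R)$ uniformly in $R$ for the sublevel sets $\Omega_R=\{f\le R\}$, $R$ a regular value, and then pass to $M$ topologically. By the weighted Hodge isomorphism \eqref{eqn: hodge thm abs}, $b_p(\Omega_R)=\dim\mathcal{H}_{f,N}^p(\Omega_R)$.

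\textbf{Step 1: Li's method on the finite-dimensional space.} Fix $R>2C_1$ and a finite-dimensional subspace $W\subset \mathcal{H}_{f,N}^p(\Omega_R)$. For $\eta\in W$ set $\omega=e^{-f/2}\eta$. Lemma~\ref{lem: integral growth estimate - abs boundary condition}, with $\sigma=1$, gives
\[
\int_{\Omega_R}|\omega|^2dv\le C\int_{\Omega_R\cap\{f\le C_1\}}|\omega|^2dv,
\]
with constants independent of $R$; the exponential factor is $\ge (1+f)^{-2q}$-controlled on $\{f>C_1\}$, so it can simply be dropped. The $L^2$ mass of every element of $W$ therefore concentrates on the fixed compact set $K=\{f\le C_1\}$.

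\textbf{Step 2: mean value inequality.} On $\{f<2C_1\}$, away from $\partial\Omega_R$ because $R>2C_1$, $\omega$ solves $L\omega=0$. By Kato and the Weitzenb\"ock formula \eqref{eqn: weighted Weitzenbock formula}, $|\omega|$ is a subsolution of $\Delta u\ge -c u$, where $c$ depends on $V_f$ and $\mathcal{R}_f$ on $\{f<2C_1\}$. Moser iteration, with constants depending only on the geometry of $\{f<2C_1\}$, then gives $\sup_K|\omega|^2\le C\int_{\{f<2C_1\}}|\omega|^2\le C'\int_K|\omega|^2$, where the last inequality is Step 1 again. Now apply Li's lemma \cite{Li80,Li97}: for a finite-dimensional space of $p$-forms with the $L^2(K)$ inner product, there is $x\in K$ and an element $\omega\ne0$ with
\[
\frac{\dim W}{\binom np}\int_K|\omega|^2\le \mathrm{vol}(K)\,|\omega|^2(x).
\]
Combining the two estimates yields $\dim W\le \binom np C'\mathrm{vol}(K)=:C_p$, independent of $R$. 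Hence $b_p(\Omega_R)\le C_p$ for all large regular $R$.

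\textbf{Step 3: passage to $M$.} This is the step I expect to be the main obstacle, since $f$ may have critical points going to infinity. Here is what I would do. Every class in $H^p_{dR}(M)$ is detected on some compact set, and a finite collection of independent classes stays independent on $\Omega_R$ for $R$ large. This holds by Poincar\'e duality with compactly supported cohomology: pair against closed compactly supported forms contained in $\Omega_R$. So $b_p(M)\le C_p$.

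For the isomorphism claim, consider the restriction maps $H^p(\Omega_{R'})\to H^p(\Omega_R)$ for $R<R'$. The dimensions are uniformly bounded, so the images stabilize. For injectivity and surjectivity of $H^p(M)\to H^p(\Omega_R)$, I would apply the harmonic-form concentration of Step 1 to show that each $f$-harmonic Neumann form on $\Omega_{R'}$ is determined by its restriction to $K$. Restriction is then injective on harmonic representatives, which gives $\dim H^p(\Omega_{R'})\le \dim(\text{image in }H^p(\Omega_R))$. From this the inverse system is eventually isomorphic, and $H^p(M)=\varprojlim H^p(\Omega_R)$ by Milnor's $\varprojlim^1$ sequence, whose $\varprojlim^1$ term vanishes for finite-dimensional stabilizing systems.

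The delicate point is that a harmonic form vanishing on $K$ need not be cohomologically trivial on $\Omega_R$. I would try to use unique continuation together with the concentration estimate: if $\int_K|\omega|^2=0$, Step 1 forces $\omega\equiv0$. So the map from $\mathcal{H}_{f,N}^p(\Omega_{R'})$ to forms on $K$ is injective. One then needs to identify this map with cohomological restriction, which I would do by comparing with the absolute Hodge isomorphism on $\Omega_R$.
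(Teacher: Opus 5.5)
Your Steps 1 and 2 coincide with Step 1 of the paper's proof: concentration on $K=\{f\le C_1\}$ from Lemma~\ref{lem: integral growth estimate - abs boundary condition}, a subsolution inequality on $\{f<2C_1\}$, Moser iteration, and Li's lemma. They correctly give $b_p(\Omega_R)\le C_p$ uniformly in $R$.

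Your passage to $b_p(M)\le C_p$ takes a different but valid route. For oriented $M$, $H^p_{dR}(M)\cong (H^{n-p}_c(M))^*$ holds with no finiteness assumption; this is the duality already used in Lemma~\ref{de Rham Lemma}. So finitely many independent classes are detected by finitely many closed compactly supported forms. Once these forms are supported in the interior of $\Omega_R$, Stokes keeps the restrictions independent. The paper instead uses $H^p_{dR}(M)\cong\varprojlim H^p_{dR}(\Omega_k)$ \cite{Hatcher} and the stabilization of $\dim I_k$, where $I_k$ is the image of the inverse limit in $H^p_{dR}(\Omega_k)$. Your argument is more elementary. Applied to a basis of $H^p_{dR}(M)$, it also shows that $H^p_{dR}(M)\to H^p_{dR}(\Omega_R)$ is injective for all large $R$.

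\textbf{The gap is in the isomorphism claim.} Injectivity of $\eta\mapsto\eta|_K$ on $\mathcal{H}^p_{f,N}(\Omega_{R'})$ is true. It says nothing, however, about injectivity of $H^p_{dR}(\Omega_{R'})\to H^p_{dR}(\Omega_R)$, and the Hodge isomorphism on $\Omega_R$ cannot bridge the two. The form $\eta|_{\Omega_R}$ is $f$-harmonic but not Neumann along $\partial\Omega_R$. Under \eqref{eqn: hodge thm abs}, its class is its $\mathcal{H}^p_{f,N}(\Omega_R)$-component in the Friedrichs decomposition \eqref{eqn: Friedrich decomposition}. That component can vanish: $\eta|_{\Omega_R}$ may lie entirely in $\mathcal{H}^p_{f,ex}(\Omega_R)$ while being far from zero on $K$.

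Your stated delicate point is also reversed. A harmonic form vanishing on $K$ vanishes identically; the danger is a nonzero one that is exact on $\Omega_R$. So the inequality $\dim H^p_{dR}(\Omega_{R'})\le\dim(\text{image in }H^p_{dR}(\Omega_R))$ is unproved, and it is essentially the whole content of the claim. Nothing based only on the uniform bound $C_p$ can supply it. An inverse system of spaces of dimension at most $1$ can have zero transition maps infinitely often. That is what cancelling pairs of critical points of index $p$ and $p+1$ escaping to infinity would produce.

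The paper's Step 2 does not resolve this either. Working with the subspaces $I_k$, on which the transition maps are surjective by construction, lets it avoid transition-map injectivity. But what it actually proves is that restriction $H^p_{dR}(M)\to H^p_{dR}(\Omega_k)$ is injective with image $I_k$ for large $k$. The equality $I_k=H^p_{dR}(\Omega_k)$, i.e.\ surjectivity, is asserted without further argument, and by the example above it does not follow from the dimension bound.

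In either treatment you need an extra input that prevents $p$-classes from being created or destroyed at large levels. One candidate is hypothesis (iii) itself. At a critical point, in a Hessian eigenbasis, $V_f+\mathcal{F}$ acts on $\theta^{i_1}\wedge\cdots\wedge\theta^{i_p}$, with $I=\{i_1,\dots,i_p\}$, by $\frac12\bigl(\sum_{k\in I}\lambda_k-\sum_{k\notin I}\lambda_k\bigr)$. For $p\ge1$, this is $\le0$ when $I$ indexes the $p$ smallest eigenvalues and the Morse index is $p$ or $p+1$. So nondegenerate critical points of these indices cannot occur in $\{f>\tilde b/\tilde a\}$. Morse theory then gives the isomorphism, at least when $f$ is Morse at large levels.
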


\begin{proof}
The assumptions here is the same as in Lemma \ref{lem: integral growth estimate - abs boundary condition}. Let $C_1, C_2$ be the constants given by Lemma \ref{lem: integral growth estimate - abs boundary condition}. 

\textbf{Step 1:} Let $\Omega \subset M$ be any sublevel set $ \{f \leq C\}$ with $C > 2C_1$, where $C$ is a regular value of $f$ so that $\Omega$ has smooth boundary. We can estimate the dimension of $\mathcal{H}^p_{f, N}(\Omega)$, i.e. the space of $f$-harmonic $p$-forms on $\Omega$ with Neumann (absolute) boundary condition. Then by (\ref{eqn: hodge thm abs}) we get an estimate of $b_p(\Omega)$. 

For any $p$-form $\omega$ such that $e^{f/2}\omega \in \mathcal{H}_{f, N}^p(\Omega)$, we can apply Lemma \ref{lem: integral growth estimate - abs boundary condition}, where we take $\sigma = 1$ and take equality in (\ref{eqn: choice of q}), after absorbing the polynomial decay term by the exponential growth term in the weight, we get 
\begin{equation}\label{eqn: doubling estimate for omega}
\int_{\{f> C_1\} \cap \Omega} |\omega|^2 dv \leq C_2\int_{ \{f\leq C_1\} \cap \Omega} |\omega|^2 dv.
\end{equation}
By the equation \eqref{eqn: definition of the operator L},
\[
L \omega = -\Delta \omega + V_f \omega +  \mathcal{R}(\omega) + \mathcal{F}(\omega) = 0,
\]
we must have 
\[
\Delta |\omega|^2 \geq 2|\n \omega|^2 - C |\omega|^2 \quad \text{on} \quad \{f < 2C_1\}
\]
for some constant $C$ depending on the geometry of this bounded set.
Then we can apply the method of \cite{Li97} (see the proof of Theorem \ref{thm: estimate of Betti numbers for gradient Ricci shrinkers} for details of this method) to get an estimate of the dimension of $f$-harmonic forms
\[
\dim \mathcal{H}_{f, N}^p(\Omega) \leq C_p,
\]
where $C_p$ depends on $C_2$ and the geometry of the domain $\{f < 2 C_1\}$. 

\textbf{Step 2:} Let $R_1< R_2 < \cdots <R_k < \cdots$ be a sequence of regular values of $f$, with $R_1 > 2 C_1$, and $R_k \to \infty$ as $k \to \infty$. Let $\Omega_i = \{f \leq R_i\}$. By Step 1 and (\ref{eqn: hodge thm abs}) we have 
\[
\dim H^p_{dR}(\Omega_k) \leq C_p, \quad k = 1,2,...
\]
in particular, $C_p$ is independent of $k$.

Let $i_{k,l}: \Omega_k \to \Omega_l$ be the inclusion, where $k < l$, then there is an induced inverse system
\[
\cdots \xrightarrow{[i^*_{k+1,k+2}]} H^p_{dR}(\Omega_{k+1}) \xrightarrow{[i^*_{k,k+1}]} H^p_{dR}(\Omega_k) \xrightarrow{[i^*_{k-1,k}]} \cdots
\]

By \cite{Hatcher}(Proposition 3F.5, although not stated, the proof clearly works for real coefficients), we have 
\[
H^p_{dR}(M) \cong  \varprojlim H^p_{dR}(\Omega_k).
\]

Let $L = \varprojlim H^p_{dR}(\Omega_k)$ denote the inverse limit, it is the subgroup of $\prod_k H^p_{dR}(\Omega_k)$ consists of elements $(g_1, g_2, g_3, ...)$ satisfying $g_k = [i^*_{k,l}](g_l)$ for $k< l$. For each $k=1,2,...$, there is a natural projection $p_k : L \to H^p_{dR}(\Omega_k)$ given by $(g_1, g_2, g_3, ...) \mapsto g_k$. Denote $I_k$ to be the image of $p_k$, then $I_k$ is the subgroup of $H^p_{dR}(\Omega_k)$ consisting of cohomology classes that can be extended to $H^p_{dR}(\Omega_l)$ for all $l > k$, hence the maps
\[
[i^*_{k,l}] : I_l \to I_k, \quad k < l.
\]
are surjective. Consequently the sequence of dimensions $\dim I_k$, $k =1,2,... $ is nondecreasing, and uniformly bounded by $C_p$, hence must stablize when $k > N$ for some integer $N$ large enough. The restriction maps $[i^*_{N, l}]: I_l \to I_N$ become isomorphisms for $l > N$. Define a map $\phi: I_N \to L$ by the following way: for $g \in I_N$, let $\phi(g) = ([i^*_{1,N}](g),...,[i^*_{N-1, N}](g), g, [i^*_{N, N+1}]^{-1}(g), [i^*_{N, N+2}]^{-1}(g),...)$. Clearly $\phi$ is linear, injective and inverse to $p_N$. Therefore we have shown that $L \cong I_N$, hence $\dim L \leq C_p$.

Note that the isomorphism $H^p_{dR}(M) \to L$ is given by 
\[
[\omega] \mapsto ([i^*_1 \omega], [i^*_2 \omega], [i^*_3 \omega], ...),
\]
where $i_k : \Omega_k \to M$ is the inclusion, $k = 1,2,...$. Thus the proof above implies that $[i^*_k]$ is an isomorphism when $k> N$. By the arbitrariness of the sequence $\{R_i\}$, we see that the restriction map $H^p_{dR}(M) \to H^p_{dR}(\{f\leq R\})$ is isomorphic when $R$ is large enough.  
\end{proof}

We can make the estimates of Betti numbers more explicit for gradient Ricci shrinkers, this leads to Theorem \ref{thm-intro:  Betti numbers of gradient Ricci shrinkers} in the introduction, now let's restate it as:
\begin{thm}\label{thm: estimate of Betti numbers for gradient Ricci shrinkers}
For constants $\epsilon_0> 0$ and $c_0 \geq 0$, there is a constant $C_1$ depending on $n, \epsilon_0$ and $c_0$ such that the following holds.
Let $(M^n, g, f)$ be a complete gradient Ricci shrinker satisfying
\begin{enumerate}
\item
$\sum_{k=n-p+1}^{n} \lambda_{k}(\operatorname{Ric}) \leq  \frac{1}{4}\operatorname{S}+ ( \frac{1}{4} - \epsilon_0) f + c_0$ on $M$;
\item $\langle \mathcal{R}(\omega), \omega \rangle \geq - k |\omega|^2$ for every $p$-form $\omega$ on the domain $\{f < 2 C_1\}$.
\end{enumerate}
Then there is a constant $C(n,\epsilon_0, c_0, k)$, such that 
\[
b_p(M) \leq C(n, \epsilon_0, c_0, k)e^{-2\mu/n}  \binom{n}{p},
\]
where $\mu = \ln \int e^{-f} (4\pi)^{-\frac{n}{2}} dv$.
\end{thm}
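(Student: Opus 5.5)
Our plan is to derive the theorem from Theorem~\ref{thm: finite Betti numbers}, making every constant explicit for shrinkers. First we check the hypotheses of Lemma~\ref{lem: integral growth estimate - abs boundary condition}. By normalization, $|\nabla f|^2 = f - \operatorname{S} \le f$ and $f \ge 0$ (using $\operatorname{S}\ge 0$). By \cite{CZ2010}, $f \to \infty$ at infinity. Hence condition (ii) holds with $a_1 = 2$, $b_1 = 0$.

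For condition (iii), trace the shrinker equation to get $V_f = \frac14(f+\operatorname{S}) - \frac n4$. Since $\nabla\nabla f = \frac12 g - \operatorname{Ric}$, we have
\[
\langle \mathcal{F}(\omega),\omega\rangle = \tfrac p2|\omega|^2 - \langle \operatorname{Ric}(\omega),\omega\rangle \ge \Bigl(\tfrac p2 - \sum_{k=n-p+1}^n\lambda_k(\operatorname{Ric})\Bigr)|\omega|^2 .
\]
Hypothesis (i) then gives
\[
V_f|\omega|^2 + \langle\mathcal F(\omega),\omega\rangle \ge \bigl(\epsilon_0 f - c_0 - \tfrac n4\bigr)|\omega|^2 .
\]
So we may take $\tilde a = \epsilon_0$ and $\tilde b = c_0 + n/4$. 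Consequently the constants $C_1, C_2$ of Lemma~\ref{lem: integral growth estimate - abs boundary condition} (with $\sigma=1$) depend only on $n,\epsilon_0,c_0$, as claimed in the statement.

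Next, fix a regular value $R > 2C_1$, set $\Omega = \{f\le R\}$, and bound $\dim \mathcal H^p_{f,N}(\Omega)$ by Li's method \cite{Li80,Li97}. For $\eta \in \mathcal H^p_{f,N}(\Omega)$, put $\omega = e^{-f/2}\eta$. The estimate \eqref{eqn: doubling estimate for omega} gives $\int_\Omega|\omega|^2 \le (1+C_2)\int_{\{f\le C_1\}}|\omega|^2$.

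On $D = \{f<2C_1\}$, the Weitzenb\"ock identity together with Kato's inequality gives $\Delta|\omega| \ge -K|\omega|$ weakly. The constant $K$ depends on $k$ (from hypothesis (ii)), on $V_f$ and on $\mathcal F$; these are controlled on $D$ by $f<2C_1$, $|\operatorname{Ric}|$ bounds there coming from (i) and $\operatorname{S}\le f$, and $\lambda_1(\operatorname{Ric})\ge -\sum_{k\ge 2}\lambda_k$.

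We then run a Moser iteration using the Li--Wang Sobolev inequality \cite{LW2020}, whose constant involves $e^{-2\mu/n}$, localized to $D$ with the cutoff adapted to $f$; note that $|\nabla f|\le\sqrt{2C_1}$ there. This yields the mean value inequality
\[
\sup_{\{f\le C_1\}}|\omega|^2 \le C e^{-2\mu/n}\int_D|\omega|^2 .
\]

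Li's lemma then finishes the count. For a finite-dimensional space $V$ of such forms there is $\omega\in V$ and $x$ with $\dim V \le \binom np |\omega|^2(x)/\fint|\omega|^2$. Choosing orthonormal bases in $L^2(\{f\le C_1\})$, integrating the trace over $\{f\le C_1\}$, and combining the mean value inequality with the doubling estimate gives
\[
\dim\mathcal H^p_{f,N}(\Omega)\le C(n,\epsilon_0,c_0,k)\,e^{-2\mu/n}\binom np .
\]
By \eqref{eqn: hodge thm abs} this bounds $b_p(\Omega)$ uniformly in $R$. Step 2 of the proof of Theorem~\ref{thm: finite Betti numbers} (the inverse limit argument) then transfers the bound to $b_p(M)$.

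The main obstacle is the mean value inequality near $\partial\Omega$ when $R$ is only slightly larger than $2C_1$, and more generally making it uniform on $\{f\le C_1\}$. Our plan is to apply it only at points of $\{f\le C_1\}$, with cutoffs supported in $\{f<2C_1\}\subset\Omega$, so the boundary never enters. The remaining difficulty is tracking how the Li--Wang Sobolev constant, which holds globally on shrinkers with dependence on $\mu$, propagates through the iteration. We also need volume control of $\{f\le C_1\}$, which follows from $\mathrm{Vol}\{f\le r\}\le C(n)r^{n/2}$, to turn the trace integral into the stated bound.
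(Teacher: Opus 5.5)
You follow the paper's proof essentially step for step. The matching steps are: the verification of Lemma~\ref{lem: integral growth estimate - abs boundary condition} with $\tilde a=\epsilon_0$, $\tilde b=c_0+\frac n4$; the doubling estimate on $\Omega=\{f\le R\}$, $R>2C_1$; an interior mean value inequality on $\{f<2C_1\}$ from the Li--Wang Sobolev inequality and Moser iteration; Li's trace argument with the Cao--Zhou volume bound; and the inverse-limit step of Theorem~\ref{thm: finite Betti numbers}. Localizing with cutoffs in $f$ instead of unit balls, and passing to $|\omega|$ via Kato, are cosmetic differences.

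The step you flagged as the remaining difficulty is, however, a genuine gap, and the paper's proof asserts the same inequality without justification. In Moser iteration with Sobolev constant $A$, the $k$-th step multiplies the squared norm by $(C_kA)^{\chi^{-k}}$, where $\chi=\frac{n}{n-2}$. Since $\sum_{k\ge 0}\chi^{-k}=\frac n2$, the choice $A=C(n)e^{-2\mu/n}$ gives $\sup|\omega|^2\le Ce^{-\mu}\int|\omega|^2$, not $Ce^{-2\mu/n}\int|\omega|^2$.

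The $e^{-2\mu/n}$ version is in fact false. Take $S^2\times(S^3/\mathbb{Z}_m)\times\R$ ($n=6$), with the compact factor normalized to $\operatorname{Ric}=\frac12 g$ and $f=\frac52+\frac{t^2}{4}$. Hypothesis (i) holds for $p=2$ and $\epsilon_0\le\frac1{10}$, and (ii) holds, both uniformly in $m$. The parallel area form of the $S^2$ factor lies in every $\mathcal H^2_{f,N}(\{f\le R\})$ and gives $|\omega|^2=e^{-f}$. So the left-hand side at a point with $t=0$ is $e^{-5/2}$. The right-hand side is at most $Ce^{-\mu/3}\int_M e^{-f}\,dv=C(4\pi)^3e^{2\mu/3}$, which tends to $0$ as $m\to\infty$ because $\mu\to-\infty$. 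Hence your argument, like the paper's, only proves $b_p\le Ce^{-\mu}\binom np$. This is weaker than claimed, since $\mu\le 0$.

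One way to reach the stated bound is to bypass the entropy altogether:
\begin{enumerate}
\item Hypothesis (i) together with $0\le\operatorname{S}\le f$ gives $|\operatorname{Ric}|\le C(n)(f+c_0)$. Bound $\lambda_{n-p+1}$ below by the average of the lowest $n-p$ eigenvalues, then $\lambda_n$ above, then use $\lambda_1\ge-(n-1)\lambda_n$.
\item So $\operatorname{Ric}$ is bounded below on each sublevel set of $f$, and the volume-normalized mean value inequality $|\omega|^2(x_0)\le\frac{C}{\operatorname{Vol}B(x_0,1)}\int_{B(x_0,1)}|\omega|^2$ is available \cite{Li12,Sal1992}.
\item Bishop--Gromov bounds $\operatorname{Vol}\{f\le C_1\}/\operatorname{Vol}B(x_0,1)$.
\item Fed into Li's trace argument, these give $b_p\le C(n,\epsilon_0,c_0,k)\binom np$, which implies the stated estimate.
\end{enumerate}
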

\begin{proof}
By the proof of Theorem \ref{thm: finite Betti numbers}, we only need to estimate the dimension of $\mathcal{H}^p_{f, N}(\Omega)$ for $\Omega = \{f\leq R\}$, when $R$ is sufficiently large. We use the method of \cite{Li97} adapted to the current setting. 

The well-known result of Cao-Zhou \cite{CZ2010} tells that $f$ has quadratic growth. Normalize $f$ by possibly adding a constant so that $|\n f|^2+\operatorname{S} = f$ , since the scalar curvature is nonnegative on a gradient Ricci shrinker, we have $|\n f|^2 \leq f$. The shrinker equation $\operatorname{Ric} + \n\n f = \frac{1}{2} g$ directly implies that $\Delta f + \operatorname{S} = \frac{n}{2}$, hence we have 
\[
V_f = \frac{1}{4} |\n f |^2 - \frac{1}{2}\Delta f = \frac{1}{4}(f + \operatorname{S} - n).
\]
We only need to consider $1\leq p \leq n-1$ since $b_0(M) = 1$ and $b_n(M) = 0$. By the assumption $\sum_{k=n-p+1}^{n} \lambda_{k}(\operatorname{Ric}) \leq \frac{1}{4}\operatorname{S}+ ( \frac{1}{4} - \epsilon_0) f + c_0$, for any $p$-form $\omega$, we have
\[
\begin{split}
V_f |\omega|^2 + \langle \mathcal{F}(\omega), \omega \rangle \geq & \left( \frac{1}{4}(f+\operatorname{S} - n) + \frac{p}{2} - \sum_{k=n-p+1}^{n} \lambda_{k}(\operatorname{Ric}) \right) |\omega|^2 \\
\geq & \left( \epsilon_0 f - \frac{n}{4}  - c_0 \right) |\omega|^2.
\end{split}
\]
Thus we have verified the assumptions of Lemma \ref{lem: integral growth estimate - abs boundary condition} with $\tilde{a}=\epsilon_0$ and $\tilde{b}=\frac{n}{4}+c_0$. Let $\omega$ be a $p$-form such that $e^{f/2}\omega \in \mathcal{H}_{f, N}^p(\Omega)$, apply Lemma \ref{lem: integral growth estimate - abs boundary condition} to $\omega$, where we take $\sigma = 1$ and take equality in (\ref{eqn: choice of q}), after absorbing the polynomial decay term by the exponential growth term in the weight, we get 
\begin{equation}\label{eqn: doubling estimate for omega}
\int_{\{f> C_1\}\cap \Omega} |\omega|^2 dv \leq C_2\int_{ \{f\leq C_1\}\cap \Omega}  |\omega|^2 dv,
\end{equation}
where we take $R >> C_1$, and $C_1, C_2$ are constants from  Lemma \ref{lem: integral growth estimate - abs boundary condition}, note that they are determined by $\epsilon_0, c_0$ and $n$ in the current situation. We will need this later.

We can choose $\tilde{\omega}_1, ..., \tilde{\omega}_k$ to be a basis of $\mathcal{H}_{f, N}^p(\Omega)$, such that they are orthonormal with respect to the inner product defined as 
\[
(\alpha, \beta) = \int_{f\leq C_1} \langle \alpha, \beta \rangle dv_f.
\]
Note that this is indeed an inner product, for if an $f$-harmonic form vanishes on an open set, then it has to vanish on the whole domain. 
Let $\omega_i = e^{-f/2} \tilde{\omega}_i$, $i = 1,...,k$. Let $q$ be the maximum point of the function $\sum |\omega_i|^2$ on the compact set $\{f \leq C_1\}$. Note that $\sum |\omega_i|^2$ is invariant under orthonormal change of basis. By the method of \cite{Li80},  up to an orthonormal change of basis, the number of $\omega_i$'s that do not vanish at $q$ is at most $rank(\Lambda^p(M))$. Then
\[
\dim \mathcal{H}_{f, N}^p(\Omega)= \int_{\{f\leq C_1\}} \sum |\omega_i|^2 dv \leq rank(\Lambda^p(M)) Vol(\{f \leq C_1\}) \sup_{\omega}\sup_{\{f\leq C_1\}}|\omega|^2,
\]
where $\omega$ runs over all $e^{-f/2} \tilde{\omega}$, with $\tilde{\omega} \in \mathcal{H}_{f, N}^p(\Omega)$ and $(\tilde{\omega}, \tilde{\omega}) = 1$. 

By \cite{CZ2010} and \cite{HM2011}, geodecis balls on gradient Ricci shrinkers have polynomial volume growth $Vol(B(p, r)) \leq C(n) r^n$, where $p$ is a minimal point of $f$, thus we have the volume estimate
\[
Vol(\{f \leq C_1\}) \leq C(n) (2\sqrt{C_1} +c(n))^n.
\]

Then  we only need to estimate the supremum of $|\omega|$. We will use $(\ref{eqn: doubling estimate for omega})$ and a mean value inequality to finish the proof.

Since $\omega$ satisfies the equation 
\[
L \omega = -\Delta \omega + V_f \omega +  \mathcal{R}(\omega) + \mathcal{F}(\omega) = 0,
\]
by the assumption $\langle \mathcal{R}(\omega), \omega \rangle \geq - k |\omega|^2$ on $\{f < 2 C_1\}$, we have 
\[
\Delta |\omega|^2 \geq 2(\epsilon_0 f - \frac{n}{4} + \frac{p}{2} -c_0 - k) |\omega|^2 \geq - 2(  \frac{n}{4} +c_0 + k) |\omega|^2 ,
\]
on $\{f< 2 C_1\}$.
By Y.~Li and B.~Wang \cite{LW2020}, there is a Sobolev inequality
\[
\left( \int_M u^{\frac{2n}{n-2}} dv \right)^{\frac{n-2}{n}}\leq C(n)e^{-2\mu/n} \int_M |\n u|^2 + \operatorname{S} u^2 dv,
\]
for any compactly supported Lipschiz function $u$, where 
\[
\mu = \ln \int e^{-f} (4\pi)^{-\frac{n}{2}} dv
\]
is Perelman's entropy of the Ricci shrinker. Within $\{f<2 C_1\}$ we have $\operatorname{S}< 2C_1$, hence by the Nash-Moser iteration method (see, for example, \cite{Li12}) we get a mean value inequality for $|\omega|$,
\[
|\omega|^2(x) \leq C(n, C_1, k) e^{-2\mu/n}  \int_{B(x, 1)} |\omega|^2 dv
\]
for any $B(x, 1) \subset \{f< 2C_1\}$. Observe that $B(x, 1) \subset \{f< 2C_1\}$ for any $x \in\{ f \leq C_1\}$ when $C_1 > 1$. Hence by (\ref{eqn: doubling estimate for omega}) and the normalization $(\omega, \omega) = 1$ we get
\[
\sup_{\omega}\sup_{\{f< C_1\}}|\omega|^2 \leq C(n, C_1, k) (1+C_2)e^{-2\mu/n} .
\]
Combining the above estimates finish the proof. 
\end{proof}

Note that the condition $V_f |\omega|^2 + \langle \mathcal{R}_f(\omega), \omega\rangle  \geq  (a_2 f - b_2)|\omega|^2$ in Theorem \ref{thm: identification of L2 cohomology and de Rham cohomology} does not necessarily imply the condition 
$V_f |\omega|^2+ \langle \mathcal{F}(\omega), \omega\rangle  \geq (\tilde{a} f - \tilde{b})|\omega|^2$ in Theorem \ref{thm: finite Betti numbers}. Nevertheless, as an application of the isomorphism provided by Theorem \ref{thm: identification of L2 cohomology and de Rham cohomology}, we still have estimates of Betti numbers. 

\begin{cor} \label{cor: betti number estimates - R_f}
Under the assumption of Theorem \ref{thm: identification of L2 cohomology and de Rham cohomology}, there is a constant $C_p$ depending on $a_1, b_1, a_2, b_2, n, p$ and the geometry of the set $\{f < 2C_1\}$, where $C_1$ is the constant in Lemma \ref{lem: integral growth estimate}, such that 
$$b_p(M) \leq C_p.$$ 
\end{cor}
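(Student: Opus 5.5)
By Theorem \ref{thm: identification of L2 cohomology and de Rham cohomology}, $H^p_{dR}(M)\cong H^p_{(2)}(M,dv_f)$. By Lemma \ref{lem: identification of reduced and unreduced L2 cohomology} and \eqref{eqn: reduced L2 chomology and harmonic forms}, the latter equals $\bar H^p_{(2)}(M,dv_f)\cong\mathcal{H}^p_f(M)$. Lemma \ref{lem: discrete spectrum} applies here since condition (iv) gives a potential lower bound $a_2 f-b_2\to+\infty$. It therefore suffices to bound $\dim\mathcal{H}^p_f(M)$ by a constant of the stated dependence. We work with the whole manifold, so no Neumann boundary is involved; this is why Lemma \ref{lem: integral growth estimate} (which uses $\mathcal{R}_f$) replaces Lemma \ref{lem: integral growth estimate - abs boundary condition} (which uses only $\mathcal{F}$).

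\textbf{Decay estimate.} For $\tilde\omega\in\mathcal{H}^p_f(M)$, set $\omega=e^{-f/2}\tilde\omega\in L^2(dv)$. Then $L\omega=0$, and the hypotheses of Lemma \ref{lem: integral growth estimate} hold with $\tilde a=a_2$, $\tilde b=b_2$. Take $\sigma=1$ and $q$ equal to the lower bound in that lemma, and absorb $(1+f)^{-2q}$ into $e^{2\sqrt{a_2/a_1}f}$ after enlarging $C_1$ if necessary. This gives
\[
\int_{\{f>C_1\}}|\omega|^2dv\le C_2\int_{\{f\le C_1\}}|\omega|^2dv,
\]
with $C_1,C_2$ depending only on $a_1,b_1,a_2,b_2$. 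In particular, the bilinear form $(\alpha,\beta)=\int_{\{f\le C_1\}}\langle\alpha,\beta\rangle dv_f$ is an inner product on $\mathcal{H}^p_f(M)$, by unique continuation for the elliptic equation $L\omega=0$.

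\textbf{Li's dimension counting.} Fix any finite-dimensional subspace of $\mathcal{H}^p_f(M)$ and choose a $(\cdot,\cdot)$-orthonormal basis $\tilde\omega_1,\dots,\tilde\omega_k$; set $\omega_i=e^{-f/2}\tilde\omega_i$. Let $x_0$ be a maximum point of $\sum_i|\omega_i|^2$ on the compact set $\{f\le C_1\}$. After an orthogonal change of basis, at most $\binom{n}{p}$ of the $\omega_i$ are nonzero at $x_0$. Hence
\[
k=\int_{\{f\le C_1\}}\sum_i|\omega_i|^2dv\le\binom np\,\mathrm{Vol}(\{f\le C_1\})\,\sup_{\omega}\sup_{\{f\le C_1\}}|\omega|^2,
\]
where the supremum runs over normalized $\omega$.

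\textbf{Sup bound (main obstacle).} We need a mean value inequality for $|\omega|$ on $\{f<2C_1\}$. From $L\omega=0$ and the fact that the zeroth-order term $V_f+\mathcal{R}_f$ is bounded on this bounded set, we get
\[
\Delta|\omega|^2\ge 2|\nabla\omega|^2-C|\omega|^2 \quad\text{on } \{f<2C_1\},
\]
with $C$ depending on the geometry there. Moser iteration on unit balls (shrinking the radius if needed so the balls stay in $\{f<2C_1\}$), with local Sobolev constants depending on that geometry, gives
\[
|\omega|^2(x)\le C\int_{B(x,1)}|\omega|^2dv \quad\text{for } x\in\{f\le C_1\}.
\]
Since $B(x,1)\subset\{f<2C_1\}$, the decay estimate bounds the right-hand side by $C(1+C_2)e^{2C_1}$ using the normalization. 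This yields $k\le C_p$ uniformly over all finite-dimensional subspaces, so $\dim\mathcal{H}^p_f(M)\le C_p$ and hence $b_p(M)\le C_p$. The delicate point is ensuring that every constant depends only on $a_1,b_1,a_2,b_2,n,p$ and the geometry of $\{f<2C_1\}$; in particular, $C_1$ must be fixed before the local geometry is invoked, which is exactly how the statement is phrased.
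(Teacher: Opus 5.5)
Your proposal is correct and takes essentially the same route as the paper. You reduce to bounding $\dim\mathcal{H}^p_f(M)$ via Theorem~\ref{thm: identification of L2 cohomology and de Rham cohomology}, use Lemma~\ref{lem: integral growth estimate} in place of Lemma~\ref{lem: integral growth estimate - abs boundary condition} to get decay on all of $M$, and then run Li's counting argument with a local mean value inequality on $\{f<2C_1\}$.

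Two small points. First, the paper gets $\Delta|\omega|^2\ge 2|\nabla\omega|^2-2b_2|\omega|^2$ directly from condition (iv) and $f\ge 0$, so no local bound on $V_f+\mathcal{R}_f$ is needed. Second, the factor $e^{2C_1}$ in your sup bound is unnecessary: the normalization $(\tilde\omega,\tilde\omega)=1$ already says $\int_{\{f\le C_1\}}|\omega|^2dv=1$.
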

\begin{proof}
We only need to modify Step 1 in the proof of Theorem \ref{thm: finite Betti numbers} to obtain estimates of $\dim \mathcal{H}_f^p(M)$, then the estimates of Betti numbers follow from Theorem \ref{thm: identification of L2 cohomology and de Rham cohomology}. 

Use Lemma \ref{lem: integral growth estimate} instead of Lemma \ref{lem: integral growth estimate - abs boundary condition} to get the estimate 
\[
\int_{\{f> C_1\} } |\omega|^2 dv \leq C_2\int_{ \{f\leq C_1\} } |\omega|^2 dv.
\]
Note that the equation
\[
L \omega = -\Delta \omega + V_f \omega +  \mathcal{R}_f(\omega)= 0,
\]
and the assumption $V_f |\omega|^2 + \langle \mathcal{R}_f(\omega), \omega\rangle  \geq  (a_2 f - b_2)|\omega|^2$ imply that 
\[
\Delta |\omega|^2 \geq 2 |\n \omega|^2 - b_2 |\omega|^2.
\]
Then the proof is the same as in that of Theorem \ref{thm: finite Betti numbers}.
\end{proof}

On gradient Ricci shrinkers, we obtain the following estimate of Betti number. Recall that $\nu$ denotes the unit outer normal vector field on regular level sets of $f$. $\lambda'_1(\operatorname{Ric}) \leq \cdots \lambda'_{n-1}(\operatorname{Ric})$ denote the eigenvalues of $\operatorname{Ric}$ restricted to the tangent space of level sets of $f$.
\begin{cor}\label{cor: betti number estiamtes for Ricci shrinkers - R_f}
 Let $(M^n, g, f)$ be a complete gradient Ricci shrinker, suppose there are constants $\sigma, \epsilon_0, \epsilon_1 > 0$ and $c_0\geq 0$, such that 
\begin{enumerate}
\item $|\n f| > \sigma $ outside of some compact domain;
\item $\operatorname{Ric}(\nu, \nu) + \sum_{k = 0}^{q-1} \lambda'_{n-k}(\operatorname{Ric}) \leq (\frac{1}{2} - \epsilon_0) f$ outside some compact domain for $q = p-1,p$;
\item $(\frac{1}{4} - \epsilon_1)f |\omega|^2 + \frac{1}{4}\operatorname{S} |\omega|^2 + \langle \mathcal{R}_f(\omega), \omega \rangle \geq - c_0|\omega|^2$ for every $p$-form $\omega$ on M.
\end{enumerate}
Then there is a constant $C(n,\epsilon_0, \epsilon_1, c_0)$, such that 
\[
b_p(M) \leq C(n, \epsilon_0, \epsilon_1, c_0)e^{-2\mu/n}  \binom{n}{p},
\]
where $\mu = \ln \int e^{-f} (4\pi)^{-\frac{n}{2}} dv$.
\end{cor}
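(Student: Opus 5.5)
The plan is to verify the hypotheses of Theorem \ref{thm: identification of L2 cohomology and de Rham cohomology} for the shrinker. This gives $b_p(M)=\dim\mathcal{H}^p_f(M)$. We then bound $\dim\mathcal{H}^p_f(M)$ by the argument of Theorem \ref{thm: estimate of Betti numbers for gradient Ricci shrinkers}, with Lemma \ref{lem: integral growth estimate} in place of Lemma \ref{lem: integral growth estimate - abs boundary condition}, exactly as in Corollary \ref{cor: betti number estimates - R_f}, and keep track of constants.

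\textbf{Verifying the hypotheses.} With the normalization $|\n f|^2+\operatorname{S}=f$, $\operatorname{S}\ge 0$ and Cao--Zhou \cite{CZ2010}, we have $f\ge 0$, $f\to\infty$ and $|\n f|^2\le f$, so $a_1=1$, $b_1=0$. Condition (i) supplies the lower bound $|\n f|>\sigma$ at infinity. For condition (iii) of Theorem \ref{thm: identification of L2 cohomology and de Rham cohomology}, use $\n\n f=\frac12 g-\operatorname{Ric}$ and $\Delta f=\frac n2-\operatorname{S}$, as in the proof of Theorem \ref{thm-intro: hodge theorem for ricci shrinkers}. The left side becomes
\[
\tfrac n2-q-\operatorname{S}+2\operatorname{Ric}(\nu,\nu)+2\sum_{k=1}^q\lambda'_k(\n\n f)\text{-part}.
\]
Since $-\lambda'_k(\n\n f)$ for the smallest $q$ eigenvalues corresponds to the largest $q$ eigenvalues of $\operatorname{Ric}$ on level sets, this equals $\frac n2-q-\operatorname{S}+2\operatorname{Ric}(\nu,\nu)+2\sum_{k=0}^{q-1}\lambda'_{n-k}(\operatorname{Ric})$. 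Using hypothesis (ii), this is at most $\frac n2-\operatorname{S}+(1-2\epsilon_0)f$. The right side is $(1-\frac{1+c}{f})(f-\operatorname{S})\ge f-\operatorname{S}-(1+c)$. So (iii) holds with strict room $2\epsilon_0 f$ once $f$ is large. For condition (iv), $V_f=\frac14(f+\operatorname{S}-n)$, so hypothesis (iii) of the corollary gives
\[
V_f|\omega|^2+\langle\mathcal{R}_f\omega,\omega\rangle\ge\left(\epsilon_1 f-\tfrac n4-c_0\right)|\omega|^2 .
\]
Thus $a_2=\epsilon_1$ and $b_2=\frac n4+c_0$, depending only on $n,\epsilon_1,c_0$.

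\textbf{Dimension count.} Take $\omega=e^{-f/2}\tilde\omega$ with $\tilde\omega\in\mathcal{H}^p_f(M)$. Lemma \ref{lem: integral growth estimate}, with $\sigma=1$ and the minimal $q$, yields $\int_{f>C_1}|\omega|^2\le C_2\int_{f\le C_1}|\omega|^2$, where $C_1,C_2$ depend only on $n,\epsilon_1,c_0$. Choose a basis orthonormal for $\int_{f\le C_1}\langle\cdot,\cdot\rangle dv$; unique continuation makes this an inner product. Li's trick \cite{Li80} then gives
\[
\dim\le\binom np\operatorname{Vol}(\{f\le C_1\})\sup|\omega|^2 ,
\]
and the volume is bounded via \cite{CZ2010,HM2011}.

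\textbf{Main obstacle.} The delicate step is the sup bound, because we need $\Delta|\omega|^2\ge -C|\omega|^2$ with $C$ depending only on $n,\epsilon_1,c_0$. From $L\omega=0$ we get $\frac12\Delta|\omega|^2=|\n\omega|^2+V_f|\omega|^2+\langle\mathcal{R}_f\omega,\omega\rangle$. Hypothesis (iii) bounds the last two terms together below by $-(\frac n4+c_0)|\omega|^2$, using $\epsilon_1f\ge0$. So no local curvature constant like the $k$ in Theorem \ref{thm: estimate of Betti numbers for gradient Ricci shrinkers} is needed, which is why the final constant has no $k$-dependence. Next apply the Li--Wang Sobolev inequality \cite{LW2020}, with $\operatorname{S}\le f<2C_1$ on the relevant region, and Moser iteration. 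This gives
\[
|\omega|^2(x)\le C(n,C_1,c_0)e^{-2\mu/n}\int_{B(x,1)}|\omega|^2
\]
for $x\in\{f\le C_1\}$. Combined with the doubling estimate, $\sup|\omega|^2\le C(1+C_2)e^{-2\mu/n}$. Multiplying out gives $b_p(M)\le C(n,\epsilon_0,\epsilon_1,c_0)e^{-2\mu/n}\binom np$. The dependence on $\sigma$ enters only qualitatively, through the isomorphism.
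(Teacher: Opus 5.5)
Your proposal follows the paper's proof essentially step for step. You verify the hypotheses of Theorem~\ref{thm: identification of L2 cohomology and de Rham cohomology} with $a_1=1$, $b_1=0$, $a_2=\epsilon_1$, $b_2=\frac n4+c_0$, using $\sigma$ only qualitatively, to get $b_p(M)=\dim\mathcal{H}^p_f(M)$. You then rerun the argument of Theorem~\ref{thm: estimate of Betti numbers for gradient Ricci shrinkers} with Lemma~\ref{lem: integral growth estimate}, using $\Delta|\omega|^2\ge 2|\nabla\omega|^2-2(\frac n4+c_0)|\omega|^2$ read off directly from hypothesis (iii).

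One caveat, which you inherit from the paper's proof of Theorem~\ref{thm: estimate of Betti numbers for gradient Ricci shrinkers} rather than introduce yourself, concerns the mean value step. Moser iteration with Sobolev constant $A=C(n)e^{-2\mu/n}$ gives a mean value constant of order $A^{n/2}\sim e^{-\mu}$, not $e^{-2\mu/n}$. Already the constant subsolution forces that constant to be at least $1/\operatorname{Vol}(B(x,1))$, and Li--Wang's non-collapsing only bounds this by $Ce^{-\mu}$. So, as written, the argument yields $b_p(M)\le Ce^{-\mu}\binom np$, which is weaker than the stated power since $\mu\le 0$.
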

\begin{proof}
We only need to point out that on a gradient Ricci shrinker, using $\operatorname{Ric}_f = \frac{1}{2} g$, $|\n f|^2 + \operatorname{S}  = f$, we have $V_f = \frac{1}{4}|\n f|^2 - \frac{1}{2} \Delta f = \frac{1}{4}(f+\operatorname{S} -n)$, and the assumptions of Theorem \ref{thm: identification of L2 cohomology and de Rham cohomology} can be verified, hence $b_p(M) = \dim \mathcal{H}_f^p(M)$. 
The assumptions also imply 
\[
\Delta |\omega|^2 \geq 2 |\n \omega|^2 - (\frac{n}{4}+c_0) |\omega|^2,
\]
for every $p$-form $\omega$ such that $L\omega = 0$.
Then the argument is the same as in the proof of Theorem \ref{thm: estimate of Betti numbers for gradient Ricci shrinkers}.

Note that the constant $\sigma > 0$ in $(i)$ is only used to verify the assumptions of Theorem \ref{thm: identification of L2 cohomology and de Rham cohomology}, and is not used in the subsequent estimate. 
\end{proof}

For gradient Ricci shrinkers with bounded curvature, an estimate of the Betti number also follows from Theorem \ref{thm: identification of L2 cohomology and de Rham cohomology} and the main theorem in \cite{HO25}.

\subsection{Vanishing of Betti numbers and contractibility of gradient Ricci shrinkers}
The curvature operator on $2$-forms is defined by 
\begin{equation}\label{eqn: curvature operator}
  \operatorname{Rm}(\theta^i \wedge \theta^j) = \frac{1}{2}R_{ijkl} \theta^k \wedge \theta^l = \sum_{k <  l} R_{ijkl} \theta^k \wedge \theta^l.  
\end{equation}
We say $\operatorname{Rm} < K$ if the largest eigenvalue of $\operatorname{Rm}$ is less than $K$.

\begin{thm}\label{thm: vanishing of cohomology with curvature operator upper bounds}
Let $(M^n, g, f)$ be a gradient Ricci shrinker satisfying \eqref{eqn: Ricci shrinker equation}. For $1\leq p \leq n-1$, 
\begin{enumerate}
\item if $(p-1)\operatorname{Rm} < \frac{1}{2 }$, then $b_p(M) = 0$;
\item if $(p-1)\operatorname{Rm} \leq \frac{1}{2 }$, then $b_p(M) \leq \binom{n}{p}$, and elements of $\mathcal{H}^p_f(M)$ are parallel $p$-forms.
\end{enumerate}
\end{thm}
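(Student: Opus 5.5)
The plan is a Bochner argument in the weighted setting, combined with the Hodge isomorphism of Section~\ref{section: l2 cohomology and de Rham cohomology}.

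\textbf{Step 1: the curvature term.} By Lemma~\ref{lem: decomposing the curvature term into Ric_f and curvature operator} and the shrinker equation $\operatorname{Ric}_f=\frac12 g$, the first term of $\mathcal{R}_f(\omega)$ is $\frac{p}{2}\omega$. The second term is, up to normalisation, $-\frac12\sum_{t\neq s}\operatorname{Rm}$ acting on the pair of slots $(t,s)$. So I will compute $\langle\mathcal{R}_f(\omega),\omega\rangle$ pointwise in an orthonormal frame. The goal is the bound
\[
\langle \mathcal{R}_f(\omega),\omega\rangle \;\ge\; \tfrac{p}{2}|\omega|^2 - p(p-1)\lambda_{\max}(\operatorname{Rm})|\omega|^2 .
\]
This is the classical Gallot--Meyer type estimate. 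Each of the $\binom{p}{2}$ pairs of slots contributes at most $2\lambda_{\max}(\operatorname{Rm})|\omega|^2$ after the factor $\frac12$ and the double counting over $t\neq s$ are taken into account. The right-hand side equals $p\bigl(\frac12-(p-1)\operatorname{Rm}\bigr)|\omega|^2$. It is therefore strictly positive under (i) and nonnegative under (ii). Getting this constant exactly right, so that the threshold comes out as $(p-1)\operatorname{Rm}<\frac12$, is the main obstacle. I would first check it on the basis form $\theta^1\wedge\cdots\wedge\theta^p$, viewing $\omega$ as a $2$-tensor in slots $(t,s)$ against the remaining slots.

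\textbf{Step 2: identify $b_p$ with $\dim\mathcal{H}^p_f(M)$.} I cannot directly use Theorem~\ref{thm-intro: hodge theorem for ricci shrinkers}, since it needs $|\operatorname{Ric}|\le \frac{1}{5n}f$. However, $\operatorname{Rm}$ is bounded above and $\operatorname{S}\ge 0$, and $\operatorname{Rm}\le K$ gives $\lambda_{\max}(\operatorname{Ric})\le (n-1)K$, so the Ricci curvature is bounded above. It is also bounded below, since $\operatorname{Ric}\ge \operatorname{S}g-(\text{largest eigenvalues})\ge -C$ follows from the trace, so $|\operatorname{Ric}|$ is bounded. Hence $|\operatorname{Ric}|=o(f)$ by \cite{CZ2010}, and Theorem~\ref{thm-intro: hodge theorem for ricci shrinkers} applies. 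This gives $b_p(M)=\dim\mathcal{H}^p_f(M)$. In the compact case $f$ is constant, and the classical Hodge theorem gives the same identification directly.

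\textbf{Step 3: weighted Bochner argument.} Let $\omega\in\mathcal{H}^p_f(M)$. By the weighted Weitzenb\"ock formula \eqref{eqn: weighted Weitzenbock formula}, $\Delta_f\omega=\mathcal{R}_f(\omega)$, where $\Delta_f=\Delta-\nabla_{\nabla f}$. Multiply by $\psi^2\omega$, with $\psi$ a cutoff, and integrate against $dv_f$:
\[
\int \psi^2\bigl(|\nabla\omega|^2+\langle\mathcal{R}_f\omega,\omega\rangle\bigr)dv_f \le 2\int \psi|\nabla\psi||\omega||\nabla\omega|\,dv_f .
\]
Absorb the right-hand side and use $\omega\in L^2(dv_f)$, together with the local $W^{1,2}(dv_f)$ bound coming from the same inequality. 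Letting the cutoff exhaust $M$ gives
\[
\int\bigl(|\nabla\omega|^2+\langle\mathcal{R}_f\omega,\omega\rangle\bigr)dv_f\le 0 .
\]
Alternatively, the energy identity $\int(|d\omega|^2+|\delta_f\omega|^2)\,dv_f=\int(|\nabla\omega|^2+\langle\mathcal{R}_f\omega,\omega\rangle)\,dv_f$ gives the same conclusion.

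\textbf{Step 4: conclude.} Under (i), Step 1 gives $\langle\mathcal{R}_f\omega,\omega\rangle>0$ wherever $\omega\neq0$, so $\omega=0$ and $b_p=0$. Under (ii), $\nabla\omega=0$, so $\omega$ is parallel. A parallel form is determined by its value at one point, so $\dim\mathcal{H}^p_f\le\binom np$, and hence $b_p\le\binom np$.
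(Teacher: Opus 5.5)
Your proposal is correct and follows the paper's proof essentially step for step. It uses the same index-freezing reduction of the second term in Lemma~\ref{lem: decomposing the curvature term into Ric_f and curvature operator} to the curvature operator on $2$-forms, giving $\langle\mathcal{R}_f(\omega),\omega\rangle\ge p\bigl(\tfrac12-(p-1)\lambda_{\max}(\operatorname{Rm})\bigr)|\omega|^2$; the same use of $\operatorname{S}\ge 0$ to bound the Ricci curvature so that Theorem~\ref{thm-intro: hodge theorem for ricci shrinkers} gives $b_p(M)=\dim\mathcal{H}^p_f(M)$; and the same weighted Bochner identity, which you justify more carefully with cutoffs.

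Two small repairs are needed. First, for $p=1$ the hypothesis is vacuous and gives no curvature bound, so your Step~2 does not apply; invoke $b_1(M)=0$ from the finiteness of $\pi_1(M)$ \cite{Wyl08} instead (the paper's proof glosses over this too). Second, your remark that $f$ is constant on a compact shrinker is false (e.g.\ the Koiso--Cao soliton). It is harmless, though, since on a closed manifold weighted Hodge theory identifies $\mathcal{H}^p_f(M)$ with $H^p_{dR}(M)$ for any smooth $f$.
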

\begin{proof}
Let $\omega$ be a $p$-form. For fixed $(p-2)$ indices $i_1,..., i_{t-1}, i_{t+1}, ..., i_{s-1}, i_{s+1},..., i_p$, define a $2$-form $\bar{\omega}_{i_1...\widehat{i_t}...\widehat{i_s}...i_p} = \frac{1}{2}\omega_{i_1...j...k...i_p} \theta^j \wedge \theta^k$, where $j,k$ are in the $t$-th and $s$-th slots respectively. Then 
\[
\langle \operatorname{Rm}(\bar{\omega}_{i_1...\widehat{i_t}...\widehat{i_s}...i_p} ) , \bar{\omega}_{i_1...\widehat{i_t}...\widehat{i_s}...i_p}  \rangle  = \frac{1}{4} \sum_{j,k,a,b = 1}^n R_{jkab}\omega_{i_1...j...k...i_p}  \omega_{i_1...a...b...i_p} .
\]
Let $\lambda$ be the largest eigenvalue of $\operatorname{Rm}$.  By Lemma \ref{lem: decomposing the curvature term into Ric_f and curvature operator}, 
\[
\begin{split}
\langle \mathcal{R}_f (\omega), \omega \rangle = & \frac{1}{p!} \sum_{i_1,...,i_p} \frac{p}{2} |\omega_{i_1...i_p}|^2 -  \frac{1}{p!} \sum_{t\neq s} \sum_{i_1...\widehat{i_t}...\widehat{i_s}...i_p} 2 \langle \operatorname{Rm}(\bar{\omega}_{i_1...\widehat{i_t}...\widehat{i_s}...i_p} ) , \bar{\omega}_{i_1...\widehat{i_t}...\widehat{i_s}...i_p}  \rangle \\
\geq & \frac{1}{p!} \sum_{i_1,...,i_p} \left( \frac{p}{2}  - p(p-1)\lambda \right) |\omega_{i_1...i_p}|^2 . 
\end{split}
\]
Note that the scalar curvature is always nonnegative on a gradient Ricci shrinker, hence the assumption on $\operatorname{Rm}$ implies that the curvature is uniformly bounded, and the assumption of Theorem \ref{thm-intro: hodge theorem for ricci shrinkers} can be verified. Then the result follows from Theorem \ref{thm-intro: hodge theorem for ricci shrinkers} and standard arguments, namely, integration by parts yields 
\[
0 = \int_M \langle \omega, \Delta_f^d \omega \rangle dv_f = \int_M ( |\n \omega|^2 + \langle \mathcal{R}_f(\omega), \omega \rangle )  dv_f,
\]
for an $f$-harmonic form $\omega$,  hence both terms on the RHS has to vanish by the curvature assumption. 
\end{proof}

\begin{remark} If $\operatorname{Rm} < \frac{1}{2(n-1)}$, then $\operatorname{Ric} < \frac{1}{2}$ and $f$ is convex, consequently $M$ is diffeomorphic to $\mathbb{R}^n$, hence $b_1 = \cdots b_n = 0$. The theorem is nontrivial for $1 \leq p \leq n-1$.
\end{remark}

\begin{cor}\label{cor: contractibility of gradient Ricci shrinkers}
Let $(M^n, g, f)$ be a noncompact gradient shrinking Ricci soliton satisfying $\operatorname{Ric} + \n\n f = \frac{1}{2} g$, suppose the curvature operator $\operatorname{Rm} <\frac{1}{2(n-2)}$, and suppose the integral homology groups $H_k(M;\Z)$ are torsion-free for $k \geq 2$. Then 
$M$ is contractible. 
\end{cor}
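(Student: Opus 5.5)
Under $\operatorname{Rm} < \frac{1}{2(n-2)}$ we have $(p-1)\operatorname{Rm} < \frac{p-1}{2(n-2)} \le \frac12$ for every $1 \le p \le n-1$. Theorem~\ref{thm: vanishing of cohomology with curvature operator upper bounds}(i) therefore gives $b_p(M) = 0$ for $1\le p\le n-1$. Since $M$ is noncompact and orientable, $H^n_{dR}(M)=0$ as well. So $M$ has the real cohomology of a point. The plan is to upgrade this to $\widetilde{H}_*(M;\Z)=0$ and $\pi_1(M)=1$, and then conclude by Whitehead's theorem. $M$ is a smooth manifold, hence has the homotopy type of a CW complex, so a simply connected $M$ with vanishing reduced integral homology is contractible.

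\textbf{Step 1: integral homology.} By the universal coefficient theorem over the field $\mathbb{R}$, $H^p_{dR}(M)\cong H^p(M;\mathbb{R}) \cong \operatorname{Hom}(H_p(M;\Z),\mathbb{R})$. Hence $\operatorname{rank} H_p(M;\Z)=0$ for $p\ge 1$. Since $M$ is homotopy equivalent to a finite-dimensional CW complex, we have $H_k(M;\Z)=0$ for $k>n$. For $k\ge 2$, $H_k(M;\Z)$ is torsion-free by assumption, and a torsion-free abelian group of rank zero is trivial, so $H_k(M;\Z)=0$ for $k\ge2$. (If the finite-type question worries us, we can use instead that $H_k\otimes\mathbb{Q}=0$ and torsion-free together force $H_k=0$, with no finite-generation needed.) For $k=1$, Wylie's result \cite{Wyl08} gives $\pi_1(M)$ finite, so $H_1(M;\Z)$ is a finite group.

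\textbf{Step 2: the fundamental group.} This is the main obstacle, since torsion in $H_1$ and a nontrivial finite $\pi_1$ are not excluded by the hypotheses directly. I would pass to the universal cover $\widetilde M$, which is a finite cover because $\pi_1(M)$ is finite. The lifted structure $(\widetilde M,\tilde g,\tilde f)$ is again a complete noncompact gradient Ricci shrinker with the same pointwise bound $\operatorname{Rm}<\frac1{2(n-2)}$. Theorem~\ref{thm: vanishing of cohomology with curvature operator upper bounds} then gives $b_p(\widetilde M)=0$ for $1\le p\le n-1$, and $\widetilde M$ is simply connected. To transfer the torsion-freeness hypothesis, I would replace it on $\widetilde M$ by a Hurewicz argument. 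Working upward in degree, if $\widetilde H_j(\widetilde M;\Z)=0$ for $j<k$, then Hurewicz gives $H_k(\widetilde M;\Z)\cong\pi_k(\widetilde M)$. One would still need $H_k(\widetilde M;\Z)$ to be torsion-free, and this step is delicate.

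A cleaner route for $\pi_1$ is an Euler characteristic / transfer argument. Since $M$ has the rational homology of a point, $H^*(\widetilde M;\mathbb{Q})^{G}=H^*(M;\mathbb{Q})$, where $G=\pi_1(M)$. However, this alone does not force $G=1$. Instead I would use the shrinker structure. By Theorem~\ref{thm: finite Betti numbers}, $M$ is homotopy equivalent to a compact sublevel set $\Omega=\{f\le R\}$, so $M$ has the homotopy type of a finite CW complex. Then $\chi(\widetilde M)=|G|\,\chi(M)=|G|$, while $\chi(\widetilde M)=1$ because $\widetilde M$ also has the rational homology of a point. Hence $|G|=1$ and $M$ is simply connected.

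\textbf{Step 3: conclusion.} With $\pi_1(M)=1$ we get $H_1(M;\Z)=0$. Step 1 gives $\widetilde H_*(M;\Z)=0$ in all other degrees. The Hurewicz theorem then gives $\pi_k(M)=0$ for all $k$, and Whitehead's theorem applied to the inclusion of a point shows that $M$ is contractible. The key point to double-check is that the lift to $\widetilde M$ satisfies all hypotheses used for the vanishing of Betti numbers, in particular those needed for Theorem~\ref{thm-intro: hodge theorem for ricci shrinkers}. This should be fine because the curvature bounds are pointwise and the Cao--Zhou estimates hold on any complete shrinker.
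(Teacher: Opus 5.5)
Your route is the paper's. Theorem~\ref{thm: vanishing of cohomology with curvature operator upper bounds} gives $b_1=\dots=b_{n-1}=0$, noncompactness gives $b_n=0$, and an Euler characteristic count on the universal cover (finite by \cite{Wyl08}) forces $\pi_1(M)=1$. Torsion-freeness together with Hurewicz and Whitehead then yields contractibility. Your Step~1 is fine. In particular, the observation that $\operatorname{Hom}(H_k(M;\Z),\mathbb{R})=0$ forces $H_k(M;\Z)$ to be torsion, hence zero when torsion-free, needs no finite-generation assumption and is slightly more careful than the paper.

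The one step that does not go through as written is the justification of finite homotopy type in Step~2. Theorem~\ref{thm: finite Betti numbers} only asserts that the restriction $H^p_{dR}(M)\to H^p_{dR}(\{f\le R\})$ is an isomorphism for large $R$. That is a statement about real cohomology, not a homotopy equivalence. Real Betti numbers alone do not give $\chi(\widetilde M)=|G|\,\chi(M)$: the double cover $S^\infty\to\mathbb{RP}^\infty$ has Euler characteristic $1$, computed from rational Betti numbers, both upstairs and downstairs. Since this multiplicativity is exactly what forces $|G|=1$, you need a genuine deformation retraction.

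The paper obtains it directly. The bound $\operatorname{Rm}<\frac{1}{2(n-2)}$ together with $\operatorname{S}\ge 0$ \cite{Chen2009} makes $\operatorname{S}$ bounded. Hence $|\n f|^2=f-\operatorname{S}>0$ wherever $f>\sup_M\operatorname{S}$. Since $f$ is proper \cite{CZ2010}, the flow of $\n f/|\n f|^2$ retracts $M$ onto the compact manifold with boundary $\{f\le R\}$, which has the homotopy type of a finite CW complex. Likewise $\widetilde M$ retracts onto the preimage of $\{f\le R\}$, a $|G|$-sheeted cover of it, so counting lifted cells gives $\chi(\widetilde M)=|G|\,\chi(M)$.

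With this replacement your proof is complete. You should also treat $n=2$ separately, since there the hypothesis $\operatorname{Rm}<\frac{1}{2(n-2)}$ is not meaningful; the paper uses the classification of two-dimensional shrinkers for that case.
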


\begin{proof}
Since gradient Ricci shrinkers in dimension $2$ are either the round sphere or the flat-Gaussian \cite{Hamilton95,Perelman03}, we only need to consider $n\geq 3$.
The assumption implies that $\operatorname{S}$ is bounded, then the equation $|\n f|^2 + \operatorname{S} = f$ implies that $\n f$ is nonsingular outside a compact domain, consequently $M$ is homotopy equivalent to a compact sublevel set of $f$, which has the homotopy type of a finite CW complex, hence the Euler characteristic is well-defined as an alternating sum of the number of cells, and is equal to the alternating sum of Betti numbers. By Theorem \ref{thm: vanishing of cohomology with curvature operator upper bounds}, we have $H^p(M) = 0$ for $p=1,...,n-1$. Since $M$ is noncompact, we also have $H^n(M) = 0$. Hence the only nonzero Betti number is $b_0(M)=1$ and the Euler characteristic $\chi(M) = 1$. 

We can show that $M$ is simply connected. By \cite{Wyl08}, $\pi_1(M)$ is finite. Let $\tilde{M}$ be the universal cover of $M$, then $\tilde{M}$ is a finite cover of $M$, and $\chi(\tilde{M}) = \chi(M) |\pi_1(M)|$. Since $\tilde{M}$ has the lifted Ricci shrinker structure with the same curvature condition, it also has $\chi(\tilde{M}) = 1$. Thus $|\pi_1(M)| = 1$. 

The Hurewicz theorem (\cite[Theorem 4.32]{Hatcher}) tells that if $M$ is $(k-1)$-connected for $k\geq 2$, then $\pi_k(M)\cong H_k(M;\Z)$. Since $b_1(M) = \cdots = b_n(M) = 0$, by the assumption that $H_k(M;\Z)$ are torsion-free, $k \geq 2$, we must have $H_k(M;\Z) = 0$ for each $k = 2,...,n$. Starting with $\pi_1(M) = \{1\}$, inductive application of the Hurewicz theorem implies that all the homotopy groups of $M$ vanish, and $M$ is contractible to a point.
\end{proof}

\subsection{The $(n-1)$-th Betti number and the number of ends.}

Let $*$ denote the Hodge star operator, we have 
\[
\alpha \wedge * \beta = \langle \alpha, \beta \rangle dv 
\]
for any $p$-forms $\alpha$ and $\beta$, where $dv$ is the Riemannian volume form. The Hodge star operator $*$ is parallel under the Levi-Civita covariant derivative, and it commutes with the Hodge Laplacian $\Delta^d$:
\[
\Delta^d * = * \Delta^d,
\]
hence it also commutes with the operator $\mathcal{R}$:
\[
\mathcal{R}(* \omega ) = * \mathcal{R} (\omega).
\]
However, the operator $\mathcal{F}$ does not commute with $*$. Let's denote an $(n-1)$-form as 
\[
\omega = \sum_{j =1}^n \omega_{1,2,...,\hat{j}, ..., n} \theta^{1}\wedge \theta^{2}\wedge \cdots \wedge \hat{\theta^j} \wedge\cdots \wedge \theta^{n},
\]
where $\hat{}$ means abscence of an item. Then by calculating at a point where $f_{ij}$ is diagonalized by the chosen frame, we have 
\[
\langle \mathcal{F}(\omega), \omega \rangle = \sum_{j = 1}^n\left( \sum_{s \neq j}f_{ss} \right) |\omega_{1,2,...,\hat{j}, ..., n}|^2.
\]
The Hodge star of $\omega$ can be written as
\begin{equation}\label{eqn: formula of *omega}
* \omega = \sum_{j =1}^n \omega_{1,2,...,\hat{j}, ..., n} (-1)^{n-j} \theta^{j}.
\end{equation}
Then we can write
\[
\mathcal{F}(* \omega) = \sum_{j,k =1}^n f_{jk}\omega_{1,2,...,\hat{j}, ..., n} (-1)^{n-j} \theta^{k}.
\]
Hence
\[
* \mathcal{F}(* \omega) = \sum_{j,k =1}^n f_{jk}\omega_{1,2,...,\hat{j}, ..., n} (-1)^{n-j + k - 1} \theta^{1}\wedge \theta^{2}\wedge \cdots \wedge \hat{\theta^k} \wedge\cdots \wedge \theta^{n},
\]
taking inner product with $\omega$, at a point where $f_{ij}$ is diagonalized, we get
\[
\langle * \mathcal{F}(* \omega), \omega \rangle = (-1)^{n-1} \sum_{j = 1}^n f_{jj} |\omega_{1,2,...,\hat{j}, ..., n}|^2.
\]

Then, since $\mathcal{R}_f(* \omega) = \operatorname{Ric}_f( * \omega)$, we have 
\[
\begin{split}
\langle \mathcal{R}_f(\omega), \omega \rangle = &  \langle (-1)^{n-1}* \mathcal{R}_f(* \omega) - (-1)^{n-1} * \mathcal{F}(* \omega) +  \mathcal{F}(\omega), \omega\rangle \\
= &  \langle \operatorname{Ric}_f(* \omega), * \omega \rangle + \sum_{j = 1}^n\left( -f_{jj} + \sum_{s \neq j}f_{ss} \right) |\omega_{1,2,...,\hat{j}, ..., n}|^2. \\
\end{split}
\]
On gradient Ricci shrinkers we have $\operatorname{Ric}_f = \frac{1}{2} g$, hence the above equation can be written as
\begin{equation}\label{eqn: formula for (R_f w,w) on n-1 form w}
\langle \mathcal{R}_f(\omega), \omega \rangle = \sum_{j = 1}^n\left( \frac{n-1}{2} +2R_{jj} -\operatorname{S} \right) |\omega_{1,2,...,\hat{j}, ..., n}|^2.
\end{equation}

\begin{thm}\label{thm: number of ends}
Let $(M^n, g, f)$ be a complete noncompact gradient Ricci shrinker satisfying \eqref{eqn: Ricci shrinker equation}. If it satisfies 
\begin{equation}\label{eqn: Einstein tensor lower bound}
    \operatorname{Ric} - \frac{1}{2}\operatorname{S}g \geq - \frac{n-1}{4}g,
\end{equation}
then one of the following cases must hold:
\begin{enumerate}
\item $b_{n-1}(M) = 0$ and $(M, g)$ has only one end;
\item $b_{n-1}(M) = 1$ and $(M, g)$ splits as $(N, g_N) \times (\mathbb{R}, g_{Eucl})$, where $(N, g_N)$ is a compact Einstein manifold.
\end{enumerate}
\end{thm}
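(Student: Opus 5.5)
The plan is to use the $(n-1)$-form identity \eqref{eqn: formula for (R_f w,w) on n-1 form w}. Together with the assumption \eqref{eqn: Einstein tensor lower bound}, it gives $\langle \mathcal{R}_f(\omega),\omega\rangle \ge 0$ for every $(n-1)$-form $\omega$. Indeed, $\frac{n-1}{2}+2R_{jj}-\operatorname{S} = 2\big(R_{jj}-\tfrac12\operatorname{S}+\tfrac{n-1}{4}\big)\ge 0$ in a frame diagonalizing $\nabla\nabla f$, hence $\operatorname{Ric}$.

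\textbf{Step 1: the Hodge identification.} I would identify $b_{n-1}(M)$ with $\dim\mathcal{H}^{n-1}_f(M)$. The cleanest way is to avoid needing $|\operatorname{Ric}|=o(f)$ by working with Hodge duals. For an $f$-harmonic $(n-1)$-form $\omega$, set $u$ so that $*\omega$ corresponds to a one-form. Since $\langle\mathcal{R}_f(\omega),\omega\rangle$ is bounded below by $0$ while $V_f=\frac14(f+\operatorname{S}-n)\ge \frac14 f-\frac n4$, condition (iv) of Theorem \ref{thm: identification of L2 cohomology and de Rham cohomology} holds with $a_2=\frac14$. Conditions (i)--(iii) require $|\nabla f|$ bounded below and the Hessian condition with $q=n-2,n-1$; these are where I expect trouble. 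I would try instead a direct argument bypassing the full isomorphism. It suffices to have the injective map of Theorem \ref{lem: well-definedness and injectivity} in degree $n-1$, or even a weaker statement: each nonzero class in $H^{n-1}_{dR}(M)$, or each extra end, produces a nonzero $f$-harmonic $(n-1)$-form in $L^2(dv_f)$.

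\textbf{Step 2: producing a form from an extra end.} Suppose $M$ has at least two ends. Using a standard construction (Li--Tam type), I would produce a nonconstant bounded $f$-harmonic function $h$ with $\int|\nabla h|^2 dv_f<\infty$. The shrinker has finite weighted volume and $\Delta_f$ has discrete spectrum, so one takes the barrier function equal to $1$ on one end and $0$ on the others, and solves the $f$-Dirichlet problems on the sublevel sets $\{f\le R\}$. Then $*dh$ is a nonzero element of $\mathcal{H}^{n-1}_f(M)$ up to weight conventions. More precisely, $dh$ is $f$-harmonic; I would check whether the correct object is $e^{-f}*dh$, which corresponds to $*$ intertwining $\delta_f$ with $d$ twisted by $e^{-f}$. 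Here I intend to use the formula above, since it is exactly what the computation of $\langle\mathcal{R}_f(\omega),\omega\rangle$ for $(n-1)$-forms is designed for. Conversely, when $b_{n-1}>0$ I would use Step 1 to obtain such a form.

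\textbf{Step 3: Bochner rigidity.} For $\omega\in\mathcal{H}^{n-1}_f(M)$, the weighted Weitzenb\"ock formula and a cutoff argument, as in Theorem \ref{thm: vanishing of cohomology with curvature operator upper bounds}, give
\[
0=\int_M \big(|\nabla\omega|^2+\langle\mathcal{R}_f(\omega),\omega\rangle\big)\,dv_f .
\]
Hence $\omega$ is parallel and $\langle \mathcal{R}_f(\omega),\omega\rangle\equiv0$. A nonzero parallel $(n-1)$-form gives a parallel unit vector field $X=*\omega/|\omega|$. By de Rham decomposition the universal cover splits off a line, and since $\pi_1$ is finite \cite{Wyl08} the splitting descends after checking the ends. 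The shrinker equation then forces $f=\frac14 t^2+f_N$ along the $\mathbb{R}$ factor, with $N$ a shrinker. Equality $R_{jj}-\frac12\operatorname{S}+\frac{n-1}{4}=0$ in the directions where $\omega$ has components pins down the curvature of $N$, and together with the soliton equation on $N$ I expect this to force $\nabla\nabla f_N=0$ and $N$ compact Einstein. Since the space of parallel forms is determined by a single value of $X$ and the splitting is unique, $b_{n-1}=1$ and $M$ has exactly two ends. If no such form exists, then $b_{n-1}=0$ and Step 2 shows there is only one end.

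The main obstacle is Step 1 together with the weight bookkeeping in Step 2. The first issue is justifying that every de Rham $(n-1)$-class, and every extra end, is represented in $\mathcal{H}^{n-1}_f$ without the curvature growth hypothesis of Theorem \ref{thm-intro: hodge theorem for ricci shrinkers}. The second is getting the correct weighted dual of $dh$.
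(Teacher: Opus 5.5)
Your Step 1 is left open, yet it carries the whole proof, and the obstruction you anticipate is not there. For $n\ge 3$, tracing \eqref{eqn: Einstein tensor lower bound} gives $\left(1-\frac{n}{2}\right)\operatorname{S}\ge -\frac{n(n-1)}{4}$, so $0\le \operatorname{S}\le \frac{n(n-1)}{2(n-2)}$. Together with $\operatorname{Ric}\ge -\frac{n-1}{4}g$, this bounds $\operatorname{Ric}$ on both sides. Consequently $|\nabla f|^2=f-\operatorname{S}\ge f-C$ and $\nabla\nabla f=\frac12 g-\operatorname{Ric}$ is bounded. Since $f$ grows quadratically, $|\operatorname{Ric}|\le \frac{1}{5n}f$ off a compact set, and Theorem \ref{thm-intro: hodge theorem for ricci shrinkers} gives $H^{n-1}_{dR}(M)\cong\mathcal{H}^{n-1}_f(M)$ directly. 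The case $n=2$, where the trace says nothing, is settled by the classification of two-dimensional shrinkers. The ends then need no analysis. The number of ends is at most $\dim H^1_c(M)+1$, and $H^1_c(M)\cong H^{n-1}_{dR}(M)^*$ by Poincar\'e duality. So $\mathcal{H}^{n-1}_f(M)=0$ yields both $b_{n-1}=0$ and one end.

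Your Step 2 cannot work as designed. On any gradient shrinker, a function with $\Delta_f h=0$ and $\int_M|\nabla h|^2dv_f<\infty$ is constant. Indeed $dh\in\mathcal{H}^1_f(M)$, and on $1$-forms $\mathcal{R}_f=\operatorname{Ric}_f=\frac12 g$, so the cutoff Bochner argument gives $dh=0$. Since $\int_M e^{-f}dv<\infty$, every end is $f$-parabolic, and the Li--Tam approximants on $\{f\le R\}$ converge to a constant. For example, $N\times\mathbb{R}$ has two ends, yet its nonconstant $f$-harmonic functions of $t$ are $a+b\int_0^t e^{s^2/4}ds$, which are unbounded with infinite weighted energy.

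Dualizing would not help either. The Hodge star sends an $f$-harmonic $\alpha\in L^2(dv_f)$ to $e^{-f}*\alpha$, which is closed and co-closed for the opposite measure $e^{f}dv$, not an element of $\mathcal{H}^{n-1}_f(M)$. This weight mismatch is exactly why $\mathcal{F}$ fails to commute with $*$.

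Finally, in Step 3, the fact that a parallel form is determined by its value at one point does not bound the dimension of the space of such forms. The paper first obtains $\operatorname{S}\equiv\frac{n-1}{2}$ from $\operatorname{Ric}(X,X)=0$ and the equality case. The Cao--Zhou volume identity then gives linear volume growth, which excludes a factor $\mathbb{R}^k$ with $k\ge 2$ in the de Rham splitting of the universal cover. Compactness of $N$ comes from $b_{n-1}(N)=b_{n-1}(M)\neq 0$. The Einstein condition follows because $\Delta f_N=0$ on compact $N$.
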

\begin{proof}
Since gradient Ricci shrinkers in dimension $2$ are either the round sphere or the flat-Gaussian \cite{Hamilton95,Perelman03}, we can assume $n\geq 3$. By \cite{Chen2009}, $\operatorname{S}\geq 0$ on a gradient Ricci shrinker, \eqref{eqn: Einstein tensor lower bound} implies that $\operatorname{Ric}$ is bounded from below; by taking trace of \eqref{eqn: Einstein tensor lower bound}, we see that $\operatorname{S}$ is also bounded from above when $n\geq 3$, consequently the Ricci curvature must be bounded from both above and below. Thus the assumptions of Theorem \ref{thm: identification of reduced L2 cohomology and de Rham cohomology} or Theorem \ref{thm-intro: hodge theorem for ricci shrinkers} can be verified, consequently we have $H^{n-1}_{dR}(M) \cong \mathcal{H}^{n-1}_f(M)$.

By the curvature assumption and (\ref{eqn: formula for (R_f w,w) on n-1 form w}), standard arguments yield that all $L^2(dv_f)$-integrable $f$-harmonic $(n-1)$-forms are parallel and make the curvature term in (\ref{eqn: formula for (R_f w,w) on n-1 form w}) vanish. 

If $\dim \mathcal{H}^{n-1}_f(M) > 0$, there exists a nonzero parallel $(n-1)$-form $w \in \mathcal{H}^{n-1}_f(M)$, $*\omega$ is a parallel $1$-form, so the metric locally splits as a product, hence $\operatorname{Ric}(*\omega, *\omega) = 0$. Moreover, by (\ref{eqn: formula for (R_f w,w) on n-1 form w}), we must have $\operatorname{Ric}(*\omega, *\omega)/|*\omega|^2 - \frac{1}{2}\operatorname{S} = -\frac{n-1}{4}$. Thus the scalar curvature is constant $\operatorname{S} = \frac{n-1}{2}$.

The universal cover $(\tilde{M}, \tilde{g})$ with the pullback metric also has a parallel vector field, by the de Rham decomposition theorem, it must split as a Riemannian product $\tilde{N} \times \R^k$, where $\tilde{N}$ is a product of irreducible factors, and $k \geq 1$. We claim that $k = 1$. To see this, note that the universal cover also has a gradient Ricci shrinker structure with the pullback potential function $\tilde{f}$, which also has constant scalar curvature $\operatorname{S}=\frac{n-1}{2}$, by \cite[Lemma 3.1 and equation (3.3)]{CZ2010}, we can derive that 
\[
nV(r) - rV'(r) = (n-1) V(r) - (n-1) \frac{2}{r} V'(r),
\]
where $V(r)$ is the volume of the set $\{2\sqrt{f}< r\}$, and is approximately the volume of a geodesic ball with radius $r$ when $r$ is large. Solving this ODE yields $V(r) = C\sqrt{r^2-2(n-1)}$ for some constant $C$, hence $V(r)$ has linear growth when $r \to \infty$. On the other hand the Riemannian product $\tilde{N} \times \R^k$ has volume growth at least in the order of $r^k$, thus we must have $k\leq 1$. Consequently $\dim \mathcal{H}_f^{n-1}(M) = 1$.

By the de Rham decomposition theorem, the action of $\pi_1(M)$ on $\tilde{M}$ preserves the Euclidean factor $\R$, thus there is an induced isometric action by $\pi_1(M)$ on $\R$. Note that $\pi_1(M)$ is finite \cite{Wyl08}, and the quotient space is smooth, thus the induced action must be trivial on the $\R$ factor. Therefore $(M, g)$ itself splits as a product $N \times \R$ isometrically. This product structure implies that $b_{n-1}(M) = b_{n-1}(N)$, hence $N$ must be compact (since a noncompact $(n-1)$-dimensional manifold must have vanishing $b_{n-1}$). 

Now $({N}, {g}_{{N}},{f}_N)$ is a compact gradient Ricci shrinker with constant scalar curvature. By maximum principle applied to the equation $\Delta f = \frac{n}{2} - \operatorname{S}$, we see that a compact gradient Ricci shrinker with constant scalar curvature must have constant potential function $f$, hence is Einstein. Therefore $({N}, {g}_{{N}})$ must be an Einstein manifold with $\operatorname{Ric}({g}_{{N}}) = \frac{1}{2} {g}_{{N}}$.

If $\dim \mathcal{H}^{n-1}_f(M) = 0$, then $b_{n-1}(M) = 0$, by Poincare duality we have $H_0^1(M) \cong H^{n-1}(M)$, where $H_0^1(M)$ is the compactly supported first cohomology group, it is well-known that $\sharp ends(M) \leq \dim H_0^1(M)+1$ (see for example \cite{Carron2002}), hence $M$ has only one end. 
\end{proof}

Examples satisfying the assumption of Theorem~\ref{thm: number of ends} 
include the product shrinkers $N^{n-k} \times \mathbb{R}^k$ with $N^{n-k}$ compact Einstein and $k \ge 1$. The case $k = 1$ attains the curvature equality and realizes the splitting in part $(ii)$. 

From the above proof, and Theorem \ref{thm: estimate of Betti numbers for gradient Ricci shrinkers}, we see that the lower bound of the Einstein tensor $\operatorname{Ric} - \frac{1}{2}\operatorname{S}$ controls the number of ends for gradient Ricci shrinkers.

\subsection{An application to Self-shrinkers of mean curvature flow}
The tools developed in the present article also apply to self-shrinkers of the mean curvature flow. As a demonstration, we prove a result similar to Theorem \ref{thm: number of ends}.

Let $\Sigma \subset \mathbb{R}^{n+1}$ be an $n$-dimensional self-shrinker of the mean curvature flow, it satisfies 
\[
\vH= -\frac{1}{2} \vx^\perp,
\]
where $\vH$ is the mean curvature vector and $\vx$ is the position vector. 

Denote $H = |\vH|$. Let $A$ be the second fundamental form as a symmetric tensor, whose components are denoted as $a_{ij}$, $1\leq i,j \leq n$. 
And let $g$ be the induced Riemannian metric. 

Define $f = \frac{1}{4}|\vx|^2$, we can derive by calculation that
\[
|\n^\Sigma f|^2 = f - H^2,
\] 
\[
\n^\Sigma \n^\Sigma f = \frac{1}{2} g - HA.
\]
By the Gauss equation $R_{ijkl} = a_{ik}a_{jl} - a_{il} a_{jk}$, the Ricci curvature of $\Sigma$ can be written as 
\[
\operatorname{Ric}= HA - A^2,
\]
where $A^2$ has components $(A^2)_{ij} = a_{ik} a^k_j$. Hence 
\[
\operatorname{Ric}_f = \frac{1}{2} g - A^2.
\]
Similar to Theorem \ref{thm: number of ends}, we can prove:
\begin{thm}\label{thm: number of ends for MCSSh}
Let $\Sigma \subset \mathbb{R}^{n+1}$ be a complete noncompact properly embedded $n$-dimensional self-shrinker of the mean curvature flow. If it satisfies 
\[
(A-Hg)^2 \leq \frac{n-1}{2} g,
\]
where $(A-Hg)^2$ denotes the symmetric tensor with components $(A-Hg)^2_{ij} = (A-Hg)_{ik} (A-Hg)^k_j$, then one of the following cases must hold:
\begin{enumerate}
\item $b_{n-1}(M) = 0$ and $\Sigma$ has only one end;
\item $b_{n-1}(M) = 1$ and $\Sigma$ is the cyliner $\mathbb{S}^{n-1}(\sqrt{2(n-1)}) \times \R$.
\end{enumerate}
\end{thm}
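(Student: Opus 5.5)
We mimic the proof of Theorem~\ref{thm: number of ends}, with $(\Sigma, g, f=\frac14|\vx|^2)$ viewed as a smooth metric measure space. We use the identities $|\n^\Sigma f|^2 = f - H^2$ and $\n^\Sigma\n^\Sigma f = \frac12 g - HA$, together with $\operatorname{Ric}_f = \frac12 g - A^2$.

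\textbf{Step 1: the curvature term on $(n-1)$-forms.} The derivation preceding \eqref{eqn: formula for (R_f w,w) on n-1 form w} did not use the shrinker equation until its last line. In a frame diagonalizing $\n\n f$ it gives
\[
\langle \mathcal{R}_f(\omega),\omega\rangle = \langle \operatorname{Ric}_f(*\omega),*\omega\rangle + \sum_j\Big(-f_{jj}+\sum_{s\ne j} f_{ss}\Big)|\omega_{1..\hat j..n}|^2 .
\]
Here $f_{jj} = \frac12 - H a_{jj}$, so $\sum_{s\neq j} f_{ss} - f_{jj} = \frac{n-2}{2} - H^2 + 2Ha_{jj}$. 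Note that $A$ and $\n\n f$ are simultaneously diagonal, and $\operatorname{Ric}_f = \frac12 g - A^2$ is diagonal in the same frame. The coefficient of $|\omega_{1..\hat j..n}|^2$ is therefore
\[
\frac{n-1}{2} - a_{jj}^2 + 2Ha_{jj} - H^2 = \frac{n-1}{2} - (a_{jj}-H)^2 ,
\]
which is $\geq 0$ exactly when $(A-Hg)^2\le \frac{n-1}{2}g$. So the hypothesis says precisely that $\mathcal{R}_f\ge 0$ on $(n-1)$-forms.

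\textbf{Step 2: Hodge isomorphism.} Next we verify the hypotheses of Theorem~\ref{thm: identification of reduced L2 cohomology and de Rham cohomology} for $p=n-1$.

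The hypothesis bounds each $|a_{jj}-H|$. Summing gives $|(1-n)H|\le n\sqrt{(n-1)/2}$, so $H$ is bounded, and then $|A|$ is bounded. Consequently $\operatorname{Ric}$, $\n\n f$ and $H^2$ are all bounded.

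Then $f\ge0$ is proper, since $\Sigma$ is properly embedded. We have $|\n f|^2=f-H^2\le f$, and $|\n f|^2\ge f/2$ away from a compact set. Condition (iii) holds because its left side is $O(1)$ while its right side grows linearly in $f$.

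For (iv) we compute $V_f = \frac14(f-H^2)-\frac12(\frac n2 - H^2)$, since $\Delta f = \frac n2 - H^2$. On $p$-forms $\mathcal{F}$ is bounded by $C|\omega|^2$. Hence $V_f+\mathcal{F}\ge \frac14 f - C$, and we get $\mathcal{H}^{n-1}_f(\Sigma)\cong H^{n-1}_{dR}(\Sigma)$.

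\textbf{Step 3: Bochner argument and rigidity.} Take $\omega\in\mathcal{H}^{n-1}_f$. From $0=\int(|\n\omega|^2+\langle\mathcal{R}_f\omega,\omega\rangle)dv_f$ we get that $\omega$ is parallel and the curvature term vanishes. The integration by parts is justified by the $W^{1,2}$ estimates of Lemma~\ref{lem: integral growth estimate}, as in Theorem~\ref{thm: vanishing of cohomology with curvature operator upper bounds}.

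If $b_{n-1}=0$, the argument from Theorem~\ref{thm: number of ends} via $H^1_0\cong H^{n-1}$ gives one end.

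Otherwise $X=*\omega$ is a nonzero parallel vector field, so $\operatorname{Ric}(X,\cdot)=0$. From the Gauss equation, $HA(X)=A^2(X)$. The vanishing of the curvature term forces $(a_{jj}-H)^2=\frac{n-1}{2}$ on every $j$ for which $\omega_{1..\hat j..n}\ne0$. In a frame with $e_n\parallel X$, only $j=n$ contributes, so $(a_{nn}-H)^2 = \frac{n-1}{2}$.

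\textbf{Step 4: identifying the cylinder (main obstacle).} Since $X$ is parallel, $\Sigma$ locally splits as $\Sigma'\times\R$, and for a hypersurface this should make $A(X)=0$. Then $H^2=\frac{n-1}{2}$ is constant. The plan is to invoke the classification of self-shrinkers splitting off a line: $\Sigma=\Sigma'\times\R$ with $\Sigma'\subset\R^n$ a self-shrinker. Constant $H\neq0$ with $\Sigma'$ compact then gives $\Sigma'=\mathbb{S}^{n-1}(\sqrt{2(n-1)})$. Compactness of $\Sigma'$ would come from $b_{n-1}(\Sigma)=b_{n-1}(\Sigma')\ne0$, and the constant-$H$ compact shrinker would be identified as the round sphere via the known classification of constant-$H$ shrinkers (or Huisken's theorem, given the positivity of $H$).

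The delicate points are two. First, we must show that a parallel tangent field forces $A(X)=0$. To do this, use $\operatorname{Ric}(X,X)=H\,A(X,X)-|A(X)|^2=0$ together with the Codazzi equations to show that $X$ is a constant ambient direction. Second, we must handle the global splitting, including the finiteness of $\pi_1$ and properness, in the submanifold setting.
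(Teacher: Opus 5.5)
\textbf{Gap in Step 4: the claim $A(X)=0$.} Steps 1--3 agree with the paper: the same identity $\langle\mathcal R_f\omega,\omega\rangle=\frac{n-1}{2}|X|^2-(A-Hg)^2(X,X)$ for $X=*\omega$, the same check of the hypotheses of Theorem~\ref{thm: identification of reduced L2 cohomology and de Rham cohomology}, and the same one-end argument. The gap is the step you flag yourself. You assert that a parallel $X$ on a hypersurface ``should'' satisfy $A(X)=0$, and propose to derive this from $\operatorname{Ric}(X,X)=0$ plus the Codazzi equations. That cannot work. The shrinker $\mathbb{S}^1(\sqrt2)\times\R^{n-1}$ is intrinsically flat and even satisfies $(A-Hg)^2\le\frac{n-1}{2}g$. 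The unit field along its circle factor is parallel, yet $A(X,X)=1/\sqrt2\neq 0$. So parallelism, Ricci-flatness in the direction $X$ and Codazzi never force $A(X)=0$. What distinguishes your $X$ is the equality case of the Bochner argument, and that must be used. There is also a small slip in Step 3: the coefficient $(a_{jj}-H)^2$ is valid only in an eigenframe of $A$. In a frame with $e_n\parallel X$ the correct statement is $(A-Hg)^2(X,X)=\frac{n-1}{2}|X|^2$, not $(a_{nn}-H)^2=\frac{n-1}{2}$.

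\textbf{Repair and comparison with the paper.} The fix is algebraic, and you already have both ingredients. Take an eigenframe of $A$ with principal curvatures $\kappa_j$ and write $X=\sum x_je_j$. Then $\operatorname{Ric}(X,\cdot)=0$, i.e.\ $A^2(X)=HA(X)$, gives $\kappa_j(\kappa_j-H)x_j=0$. The vanishing of the curvature term gives $(\kappa_j-H)^2=\frac{n-1}{2}$ whenever $x_j\neq0$. This rules out $\kappa_j=H$, so $\kappa_j=0$ for every such $j$, and $H^2=\frac{n-1}{2}$. Hence $A(X)=0$. Since $\nabla X=0$, the ambient derivative of $X$ is $\pm A(\cdot,X)$ times the unit normal, which vanishes, so $X$ is a constant vector of $\R^{n+1}$. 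Its integral curves in the complete $\Sigma$ are full straight lines, so $\Sigma$ is translation invariant and $\Sigma=\Sigma'\times\R$ globally. Your second ``delicate point'' (finite $\pi_1$, de Rham decomposition) therefore disappears. The rest of your plan then goes through: $b_{n-1}(\Sigma')=b_{n-1}(\Sigma)\neq0$ makes $\Sigma'$ compact, and Huisken's classification of compact shrinkers with $H>0$ yields $\mathbb{S}^{n-1}(\sqrt{2(n-1)})$ (for $n=2$, constant curvature already gives a circle). The paper avoids both $A(X)=0$ and the splitting. It combines $\operatorname{Ric}(X,X)=0$ with the equality to get $H^2=\frac{n-1}{2}+|A(X)|^2/|X|^2\ge\frac{n-1}{2}$, so $H>0$ after choosing the orientation. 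It then invokes Cheng--Zhou's volume growth bound and the Colding--Minicozzi classification of mean-convex shrinkers. Your corrected route needs only the compact (Huisken) classification, at the cost of the short splitting argument.
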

\begin{proof}
By properness of the embedding, $f = \frac{1}{4} |\vx|^2$ is proper and goes to infinity as $\vx \to \infty$ The assumption implies that 
\[
- \sqrt{\frac{n-1}{2}} g \leq A - Hg \leq \sqrt{\frac{n-1}{2}} g,
\]
taking trace of this inequality implies that $H$ and $A$ are bounded. Then the assumptions of Theorem \ref{thm: identification of reduced L2 cohomology and de Rham cohomology} can be verified, hence $H^{n-1}_{dR}(\Sigma) = \mathcal{H}^{n-1}_f(\Sigma)$.

By (\ref{eqn: formula for (R_f w,w) on n-1 form w}) and the equations of the self-shrinker, the curvature term for an $(n-1)$-form can be written in orthonormal frame as 
\[
\begin{split}
\langle \mathcal{R}_f(\omega), \omega \rangle = & \sum_{j = 1}^n\left( \frac{n-1}{2} - \sum_{k=1}^n a_{jk} a_{kj} + 2Ha_{jj} -H^2\right) |\omega_{1,2,...,\hat{j}, ..., n}|^2 \\
 =& \sum_{j = 1}^n\left( \frac{n-1}{2} - (A-Hg)^2_{jj}\right) |\omega_{1,2,...,\hat{j}, ..., n}|^2.
\end{split}
\]
The assumption guarantees that this curvature term is nonnegative, and all $L^2(dv_f)$-integrable $f$-harmonic $(n-1)$-forms are parallel. 

If $\dim \mathcal{H}_f^{n-1}(\Sigma) = 0$ then $\Sigma$ has only one end by the same argument as in Theorem \ref{thm: number of ends}.

If $\dim \mathcal{H}_f^{n-1}(\Sigma) \geq 1$. Let $\omega \in \dim \mathcal{H}_f^{n-1}(\Sigma)$ be a nontrivial $f$-harmonic form, then $*\omega$ is a parallel $1$-form and let $\vv$ be its dual vector field. The metric $g$ locally splits as a product metric and $\operatorname{Ric}(\vv,\vv) = 0$. Note that $\operatorname{Ric} = (Hg -A)A$, and vanishing of the curvature term implies that $(A-Hg)^2(\vv, \vv) = \frac{n-1}{2}$, these two equation together imply that 
\[
H^2 = \frac{n-1}{2}+ HA(\vv, \vv) = \frac{n-1}{2}+ A^2(\vv, \vv) \geq \frac{n-1}{2}, 
\]
hence by choosing appropriate orientation we have $H \geq \sqrt{\frac{n-1}{2} } > 0$. Then by \cite{CZ13}, $\Sigma$ has linear volume growth. By \cite{CM12} (see also \cite{Ri14}), a properly embedded self-shrinker with nonnegative mean curvature and polynomial volume growth must be a cylinder $\mathbb{S}^k(\sqrt{2k}) \times \R^{n-k}$, in our case, clearly $k = n-1$ is the only possibility. 
\end{proof}






\begin{thebibliography}{99}
\footnotesize

\bibitem{Agmon82} Agmon, S. \emph{Lectures on exponential decay of solutions of second-order elliptic equations: Bounds on eigenfunctions of N-body Schr\"odinger operators}. Math. Notes 29, Princeton University Press, Princeton, 1982.

\bibitem{AS00} Ahmed, Z. M.; Stroock, D. W. \emph{A Hodge theory for some non-compact manifolds}. J. Differential Geom. \textbf{54} (2000), no. 1, 177–225.

\bibitem{APS75} Atiyah, M. F.; Patodi, V. K.; Singer, I. M. \emph{Spectral asymmetry and Riemannian geometry I}. Math. Proc. Cambridge Philos. Soc. \textbf{77} (1975), 43–69.

\bibitem{Anderson87} Anderson, M. \emph{$L^2$ harmonic forms on complete Riemannian manifolds}. Geometry and Analysis on Manifolds (Katata/Kyoto, 1987), Lecture Notes in Math., 1339, Springer, Berlin, 1988, pp. 1–19.

\bibitem{AK22} Angenent, S.; Knopf, D. \emph{Ricci solitons, conical singularities, and nonuniqueness}. Geom. Funct. Anal. \textbf{32} (2022), no. 3, 411–489.

\bibitem{Bamler2020} Bamler, R. H. \emph{Structure theory of non-collapsed limits of Ricci flows}. arXiv:2009.03243 (2020).

\bibitem{Bamler2023} Bamler, R. H. \emph{Compactness theory of the space of Super Ricci flows}. Invent. Math. \textbf{233} (2023), 1121–1277.

\bibitem{BCCD} Bamler, R. H.; Cifarelli, C.; Conlon, R. J.; Deruelle, A. \emph{A new complete two-dimensional shrinking gradient K\"ahler-Ricci soliton}. Geom. Funct. Anal. \textbf{34} (2024), no. 2, 377–392.

\bibitem{BB2026} Bertellotti, A.; Buzano, R. \emph{Ends of (singular) Ricci shrinkers}. Selecta Math. (N.S.) \textbf{32} (2026), no. 1, Paper No. 1, 46 pp.

\bibitem{Bue99} Bueler, E. \emph{The heat kernel weighted Hodge Laplacian on non compact manifolds}. Trans. Amer. Math. Soc. \textbf{351} (1999), no. 2, 683–713.

\bibitem{Bullock02} Bullock, S. S. \emph{Gaussian weighted unreduced $L^2$ cohomology of locally symmetric spaces}. New York J. Math. \textbf{8} (2002), 241–256.

\bibitem{CaoZhang11} Cao, X.; Zhang, Q. \emph{The conjugate heat equation and ancient solutions of the Ricci flow}. Adv. Math. \textbf{228} (2011), no. 5, 2891–2919.

\bibitem{CZ2010} Cao, H.-D.; Zhou, D. \emph{On complete gradient shrinking Ricci solitons}. J. Differential Geom. \textbf{85} (2010), no. 2, 175–185.

\bibitem{Carron2002} Carron, G. \emph{$L^2$ harmonic forms on non-compact Riemannian manifolds}. Surveys in Analysis and Operator Theory (Canberra, 2001), Proc. Centre Math. Appl. Austral. Nat. Univ., 40, Austral. Nat. Univ., Canberra, 2002, pp. 49–59.

\bibitem{CFSZ2020} Chow, B.; Freedman, M.; Shin, H.; Zhang, Y. \emph{Curvature growth of some 4-dimensional gradient Ricci soliton singularity models}. Adv. Math. \textbf{372} (2020), 107303, 17 pp.

\bibitem{CCD} Cifarelli, C.; Conlon, R. J.; Deruelle, A. \emph{On finite time Type I singularities of the K\"ahler-Ricci flow on compact K\"ahler surfaces}. J. Eur. Math. Soc. (JEMS) (2024), in press. arXiv:2203.04380.

\bibitem{CDS} Conlon, R. J.; Deruelle, A.; Sun, S. \emph{Classification results for expanding and shrinking gradient K\"ahler-Ricci solitons}. Geom. Topol. \textbf{28} (2024), no. 1, 267–351.

\bibitem{CZ2022} Chan, P.-Y.; Zhu, B. \emph{On a dichotomy of the curvature decay of steady Ricci solitons}. Adv. Math. \textbf{404} (2022), part B, Paper No. 108458, 40 pp.

\bibitem{Chen2009} Chen, B.-L. \emph{Strong uniqueness of the Ricci flow}. J. Differential Geom. \textbf{82} (2009), no. 2, 363–382.

\bibitem{CZ13} Cheng, X.; Zhou, D. \emph{Volume estimate about shrinkers}. Proc. Amer. Math. Soc. \textbf{141} (2013), no. 2, 687–696.

\bibitem{CZ17} Cheng, X.; Zhou, D. \emph{Eigenvalues of the drifted Laplacian on complete metric measure spaces}. Commun. Contemp. Math. \textbf{19} (2017), no. 1, 1650001.

\bibitem{CM12} Colding, T. H.; Minicozzi, W. P., II. \emph{Generic mean curvature flow I; generic singularities}. Ann. of Math. (2) \textbf{175} (2012), no. 2, 755–833.

\bibitem{CM21} Colding, T. H.; Minicozzi, W. P., II. \emph{Optimal growth bounds for eigenfunctions}. arXiv:2109.04998 (2021).

\bibitem{Dai2011} Dai, X. \emph{An introduction to $L^2$ cohomology}. Topology of Stratified Spaces, Math. Sci. Res. Inst. Publ., 58, Cambridge Univ. Press, Cambridge, 2011, pp. 1–12.

\bibitem{DY23} Dai, X.; Yan, J. \emph{Witten deformation for noncompact manifolds with bounded geometry}. J. Inst. Math. Jussieu \textbf{22} (2023), no. 2, 643–680.

\bibitem{deRham1973} de Rham, G. \emph{Vari\'et\'es diff\'erentiables. Formes, courants, formes harmoniques}. 3rd ed., Hermann, Paris, 1973.

\bibitem{EMT11} Enders, J.; M\"uller, R.; Topping, P. \emph{On type-I singularities in Ricci flow}. Comm. Anal. Geom. \textbf{19} (2011), no. 5, 905–922.

\bibitem{Hatcher} Hatcher, A. \emph{Algebraic topology}. Cambridge Univ. Press, Cambridge, 2002.

\bibitem{Hamilton95} Hamilton, R. \emph{The formation of singularities in the Ricci flow}. Surveys in Differential Geometry, Vol. II, Int. Press, Cambridge, MA, 1995, pp. 7–136.

\bibitem{HM2011} Haslhofer, R.; M\"uller, R. \emph{A compactness theorem for complete Ricci shrinkers}. Geom. Funct. Anal. \textbf{21} (2011), no. 5, 1091–1116.

\bibitem{HO25} He, F.; Ou, J. \emph{Dimension estimate and existence of holomorphic sections with polynomial growth on gradient K\"ahler Ricci shrinkers}. Int. Math. Res. Not. IMRN \textbf{2025} (2025), no. 23, 1–29.

\bibitem{HN14} Hein, H.-J.; Naber, A. \emph{New logarithmic Sobolev inequalities and an $\epsilon$-regularity theorem for the Ricci flow}. Comm. Pure Appl. Math. \textbf{67} (2014), no. 9, 1543–1561.

\bibitem{HW2022} Hua, B.; Wu, J. \emph{Gap theorems for ends of smooth metric measure spaces}. Proc. Amer. Math. Soc. \textbf{150} (2022), no. 11, 4947–4957.

\bibitem{HT25} Huang, T.; Tan, Q. \emph{Curvature operator and Euler number}. Calc. Var. Partial Differential Equations \textbf{64} (2025), Paper No. 205.

\bibitem{Li80} Li, P. \emph{On the Sobolev constant and the $p$-spectrum of a compact Riemannian manifold}. Ann. Sci. \'Ecole Norm. Sup. (4) \textbf{13} (1980), no. 4, 451–468.

\bibitem{Li97} Li, P. \emph{Harmonic sections of polynomial growth}. Math. Res. Lett. \textbf{4} (1997), no. 1, 35–44.

\bibitem{Li12} Li, P. \emph{Geometric analysis}. Cambridge Stud. Adv. Math., 134, Cambridge Univ. Press, Cambridge, 2012.

\bibitem{LW2020} Li, Y.; Wang, B. \emph{Heat kernel on Ricci shrinkers}. Calc. Var. Partial Differential Equations \textbf{59} (2020), no. 6, Paper No. 194, 84 pp.

\bibitem{LW2024} Li, Y.; Wang, B. \emph{On K\"ahler–Ricci shrinker surfaces}. Acta Math., in press (2024).

\bibitem{LZ25} Li, Y.; Zhang, W. \emph{On the rigidity of Ricci shrinkers}. arXiv:2305.06143 (2023).

\bibitem{Lott03} Lott, J. \emph{Some geometric properties of the Bakry-Emery-Ricci tensor}. Comment. Math. Helv. \textbf{78} (2003), 865–883.

\bibitem{MS2013} Munteanu, O.; Sesum, N. \emph{On gradient Ricci solitons}. J. Geom. Anal. \textbf{23} (2013), no. 2, 539–561.

\bibitem{MSW2019} Munteanu, O.; Sung, C. J. A.; Wang, J. \emph{Poisson equation on complete manifolds}. Adv. Math. \textbf{348} (2019), 81–145.

\bibitem{MSW21} Munteanu, O.; Schulze, F.; Wang, J. \emph{Positive solutions to Schr\"odinger equations and geometric applications}. J. Reine Angew. Math. \textbf{774} (2021), 185–217.

\bibitem{MW2015} Munteanu, O.; Wang, J. \emph{Topology of K\"ahler Ricci solitons}. J. Differential Geom. \textbf{100} (2015), no. 1, 109–128.

\bibitem{MW2015b} Munteanu, O.; Wang, J. \emph{Geometry of shrinking Ricci solitons}. Compos. Math. \textbf{151} (2015), no. 12, 2273–2300.

\bibitem{MW2017a} Munteanu, O.; Wang, J. \emph{Positively curved shrinking Ricci solitons are compact}. J. Differential Geom. \textbf{106} (2017), no. 3, 499–505.

\bibitem{MW2017b} Munteanu, O.; Wang, J. \emph{Conical structure for shrinking Ricci solitons}. J. Eur. Math. Soc. (JEMS) \textbf{19} (2017), no. 11, 3377–3390.

\bibitem{MW2019} Munteanu, O.; Wang, J. \emph{Structure at infinity for shrinking Ricci solitons}. Ann. Sci. \'Ec. Norm. Sup\'er. (4) \textbf{52} (2019), no. 4, 891–925.

\bibitem{MW2022} Munteanu, O.; Wang, J. \emph{Ends of gradient Ricci solitons}. J. Geom. Anal. \textbf{32} (2022), no. 12, Paper No. 303, 26 pp.

\bibitem{MW11} Munteanu, O.; Wang, M.-T. \emph{The curvature of gradient Ricci solitons}. Math. Res. Lett. \textbf{18} (2011), no. 6, 1051–1069.

\bibitem{Naber10} Naber, A. \emph{Noncompact shrinking four solitons with nonnegative curvature}. J. Reine Angew. Math. \textbf{645} (2010), 125–153.

\bibitem{Perelman02} Perelman, G. \emph{The entropy formula for the Ricci flow and its geometric applications}. arXiv:math.DG/0211159 (2002).

\bibitem{Perelman03} Perelman, G. \emph{Ricci flow with surgery on three-manifolds}. arXiv:math.DG/0303109 (2003).

\bibitem{PW20} Petersen, P.; Wink, M. \emph{The Bochner technique and weighted curvatures}. SIGMA Symmetry Integrability Geom. Methods Appl. \textbf{16} (2020), Paper No. 058.

\bibitem{PW21a} Petersen, P.; Wink, M. \emph{New curvature conditions for the Bochner technique}. Invent. Math. \textbf{224} (2021), 33–54.

\bibitem{PW21b} Petersen, P.; Wink, M. \emph{Vanishing and estimation results for Hodge numbers}. J. Reine Angew. Math. \textbf{780} (2021), 197–219.

\bibitem{PRS11} Pigola, S.; Rimoldi, M.; Setti, A. G. \emph{Remarks on non-compact gradient Ricci solitons}. Math. Z. \textbf{268} (2011), no. 3-4, 777–790.

\bibitem{QW2022} Qu, Y.; Wu, G. \emph{When does gradient Ricci soliton have one end?}. Ann. Global Anal. Geom. \textbf{62} (2022), no. 3, 679–691.

\bibitem{RS78} Reed, M.; Simon, B. \emph{Analysis of operators}. Methods of Modern Mathematical Physics, IV. Academic Press, New York, 1978.

\bibitem{Ri14} Rimoldi, M. \emph{On a classification theorem for self-shrinkers}. Proc. Amer. Math. Soc. \textbf{142} (2014), no. 10, 3605–3613.

\bibitem{Sal1992} Saloff-Coste, L. \emph{Uniformly elliptic operators on Riemannian manifolds}. J. Differential Geom. \textbf{36} (1992), no. 2, 417–450.

\bibitem{Schwarz95} Schwarz, G. \emph{Hodge decomposition---a method for solving boundary value problems}. Lecture Notes in Math., 1607, Springer-Verlag, Berlin, 1995.

\bibitem{Sesum06} Sesum, N. \emph{Convergence of the Ricci flow toward a unique soliton}. Comm. Anal. Geom. \textbf{14} (2006), no. 2, 283–343.

\bibitem{SunZhang24} Sun, S.; Zhang, J. \emph{K\"ahler–Ricci shrinkers and Fano fibrations}. arXiv:2410.09661 (2024).

\bibitem{Wyl08} Wylie, W. \emph{Complete shrinking Ricci solitons have finite fundamental group}. Proc. Amer. Math. Soc. \textbf{136} (2008), no. 5, 1803–1806.

\bibitem{Ye04} Yeganefar, N. \emph{Sur la $L^2$-cohomologie des vari\'et\'es \`a courbure n\'egative}. Duke Math. J. \textbf{122} (2004), no. 1, 145–180.

\end{thebibliography}
\end{document}